\definecolor{marin}{rgb}   {0.,   0.1,   0.5} 
\definecolor{rouge}{rgb}   {0.8,   0.,   0.} 
\definecolor{sepia}{rgb}   {0.4,   0.25,   0.} 
\definecolor{mag}{rgb}   {0.3,   0,   0.3} 
\newtheorem{theorem}{Theorem}[section]
\newtheorem{corollary}[theorem]{Corollary}
\newtheorem{lemma}[theorem]{Lemma}
\newtheorem{proposition}[theorem]{Proposition}
\newtheorem{definition}[theorem]{Definition}
\newtheorem{remark}[theorem]{Remark}
\newcommand{\eps}{\varepsilon}
\begin{document}

\title[A  result of the type Nekhoroshev  for  NLS on $\mathbb T^d$ in low regularity
]{Almost global existence for some nonlinear Schr\"odinger equations on $\mathbb{T}^d$ in low regularity}
%une autre proposition de titre ;-)

\author{Joackim Bernier}

\address{\small{Nantes Universit\'e, CNRS, Laboratoire de Math\'ematiques Jean Leray, LMJL,
F-44000 Nantes, France
}}

\email{joackim.bernier@univ-nantes.fr}

\author{Beno\^it Gr\'ebert}

\address{\small{Nantes Universit\'e, CNRS, Laboratoire de Math\'ematiques Jean Leray, LMJL,
F-44000 Nantes, France
}}

\email{benoit.grebert@univ-nantes.fr}

\keywords{Birkhoff normal forms, low regularity, NLS equation}

\subjclass[2010]{ 35Q55, 37K45, 37K55 }
% 35Q55, 81Q05 , 37K55}

\begin{abstract} 
We are interested in the long time behavior of solutions of the nonlinear Schr\"odinger equation on the $d$-dimensional torus in low regularity, i.e. for small initial data in the Sobolev space $H^{s_0}(\mathbb T^d)$ with $s_0>d/2$. We prove that, even in this context of low regularity, the $H^s$-norms, $s\geq 0$, remain under control during times,  $T_\varepsilon= \exp \big(-\frac{|\log\varepsilon|^2}{4\log|\log\varepsilon|} \big)$, exponential with respect to the initial size of the initial datum in $H^{s_0}$, $\|u(0)\|_{H^{s_0}}=\varepsilon$. For this, we add to the linear part of the equation a random Fourier multiplier in $\ell^\infty(\mathbb Z^d)$ and show our stability result for almost any realization of this multiplier. In particular, with such Fourier multipliers, we obtain the almost global well posedness of the nonlinear Schr\"odinger equation on $H^{s_0}(\mathbb T^d)$ for any $s_0>d/2$ and any $d\geq1$.

\end{abstract} 
\maketitle

\setcounter{tocdepth}{1} 
\tableofcontents

\section{Introduction}
The long time behavior of solutions of Hamiltonian partial differential equations has been a major issue in the PDE community for two decades. A central question, initially posed by Bourgain \cite{Bou96}, concerns the possibility that a solution sees its Sobolev $H^s$-norm  tend to infinity when time tends to infinity for $s$ large enough although the energy - the Hamiltonian - is conserved (see also \cite{Bou04a, Bou04b}). Such a behavior would clearly contrast with the behavior of solutions of linear PDEs. A number of results have been obtained to postpone this eventuality to very long times with respect to the size of the initial data (see e.g. \cite{Bou96,BG06,BDGS07,BD17,KillBill,BMP20,FI19}). Unfortunately they only concern very regular solutions (the larger is $s$, the longer is the stability time) contrary to what the numerical simulations suggest (\cite{CHL08a, CHL08b}). On the contrary we consider here low regularity solutions of an emblematic Hamiltonian PDE, namely the nonlinear Schr\"odinger equation. Concretely we consider the Cauchy problem 
\begin{equation}\label{NLS}\tag{NLS}
\left\{
\begin{array}{lll}
i\partial_t u&= -\Delta u +V*u +\sigma |u|^{2p}u,\\
u(0)&=u^{(0)},
\end{array} \right.
\end{equation}
where $t\in \mathbb{R}$, $x\in \mathbb{T}^d = (\mathbb{R}/2\pi \mathbb{Z})^d$,  $d\geq 1$, $p\geq 1$ is an integer, $\sigma \in \{-1,1\}$ allows to consider both the focusing and the defocusing cases and $u\mapsto V* u$ is a Fourier multiplier with a potential  $V\equiv (V_k)_{k\in\mathbb Z^d}\in\ell^\infty(\mathbb Z^d;\mathbb{R})$ whose Fourier coefficients are real and bounded. More precisely, we identify every function $v \in L^2(\mathbb{T}^d)$ with the sequence of its Fourier coefficients
$$
v_k  = (2\pi)^{-d/2} \int_{\mathbb{T}^d} v(x) e^{-i k \cdot x} \mathrm{d}x, \quad k\in \mathbb{Z}^d
$$ and so $V* u$ is defined by the relation $(V* u)_k=(2\pi)^{-d/2} V_k u_k$. Our main result is
\begin{theorem}\label{th-main}
There exists a non empty set $\mathcal V\subset \ell^\infty(\mathbb Z^d)$ such that for $V\in\mathcal V$ and $s_0>d/2$, there exits $\eps_0\equiv \eps_0(s_0,V,d,p) >0$ such that for any  $u^{(0)}\in H^{s_0}(\mathbb T^d)$ satisfying $\varepsilon := \|u^{(0)}\|_{H^{s_0}} \leq \varepsilon_0$,
 the Cauchy problem \eqref{NLS} has a unique solution 
 \begin{center}
 $
 u\in C^0((-T_\eps,T_\eps);H^{s_0}(\mathbb T^d))\cap  C^1((-T_\eps,T_\eps);H^{s_0-2}(\mathbb T^d))\ $
with 
$
\ T_\eps= \eps^{-\frac{|\log\eps|}{4\log|\log\eps|}}.
$
\end{center}
 If moreover $u^{(0)}\in H^{s}(\mathbb T^d)$ for some $s\geq 0$ then $u\in C^0((-T_\eps,T_\eps);H^{s}(\mathbb T^d))$ and
\begin{equation}
\label{nous} 
\| u(t)\|_{H^s} \leq C_s \|u^{(0)}\|_{H^s} \quad \mathrm{for } \quad |t|\leq T_\eps
\end{equation}
where $C_s\geq 1$ is a constant depending only on $s$.
\end{theorem}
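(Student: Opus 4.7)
The plan is to rewrite \eqref{NLS} as a Hamiltonian system in the Fourier side, set $\omega_k = |k|^2 + (2\pi)^{-d/2} V_k$ for the unperturbed frequencies, and then run a Birkhoff normal form (BNF) procedure. The Hamiltonian reads $H(u) = H_0(u) + P(u)$ with $H_0(u) = \sum_k \omega_k |u_k|^2$ and $P(u) = \frac{\sigma}{p+1}\int_{\mathbb T^d}|u|^{2p+2}\,\mathrm dx$. The goal is to conjugate $H$, by a symplectic near-identity transformation $\tau$, to a Hamiltonian of the form $H_0 + Z + R$, where $Z$ is in resonant normal form (and hence approximately commutes with every action $|u_k|^2$) and $R$ is a small remainder whose Hamiltonian vector field is estimated at low regularity.

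The set $\mathcal V \subset \ell^\infty(\mathbb Z^d)$ is built by a measure-theoretic/probabilistic argument: treating $V$ as random (uniform on a cube of $\ell^\infty$) one shows that, with full measure, the frequencies $(\omega_k)$ satisfy non-resonance estimates of the form
\[
\Big|\sum_{j=1}^{r}\varepsilon_j \omega_{k_j}\Big|\ge \gamma\, \mu(k)^{-\alpha(r)},
\]
for all non-trivial $(r,\epsilon,k)$ with $r\le 2p+2+N$, where $\mu(k)$ is the third largest of $|k_1|,\dots,|k_r|$ (or some analogous object) and $\alpha(r)$ grows moderately in $r$. Writing these small-divisor conditions as a union of bad events and estimating their measures by hyperplane-distance arguments (in the $V_k$ variables, which enter linearly in $\omega_k$) yields a full-measure set of admissible $V$'s, hence $\mathcal V$ is non-empty (in fact generic). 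The delicate point here is that, because we only impose an $\ell^\infty$ bound on $V$, the separation of divisors must depend on only finitely many $V_k$'s at a time, which is what makes the conditions compatible with a tame BNF at low regularity.

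Next I would iterate the BNF $N$ times: at each step we solve a homological equation $\{H_0,\chi_n\} + Z_n = Q_n$ whose solution $\chi_n$ exists because of the non-resonance estimates, and we set the time-$1$ flow of $X_{\chi_n}$ as the $n$-th symplectic change of variable. The crucial low-regularity input is a tame multilinear estimate in $H^{s_0}$ with $s_0 > d/2$, exploiting the algebra property of $H^{s_0}$ (via Sobolev embedding into $L^\infty$) plus the small-divisor gain from $\mu(k)^{-\alpha}$, which localizes the loss of derivatives to low-index modes. One then controls the norm of the remainder $R_N$ by a bound of the type $\|X_{R_N}(u)\|_{H^{s_0}} \le (C N)^{C N}\, \|u\|_{H^{s_0}}^{2p+1+N}$; optimizing in $N \sim |\log\varepsilon|/\log|\log\varepsilon|$ gives exactly the time $T_\varepsilon = \varepsilon^{-|\log\varepsilon|/(4\log|\log\varepsilon|)}$.

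Finally, a bootstrap/Gr\"onwall argument on the normal-form coordinates yields both the $H^{s_0}$ control (which gives the existence statement) and, because $Z$ commutes with the actions, the propagation of higher Sobolev norms \eqref{nous} for any $s\ge 0$: differentiating $\|\tau(u)(t)\|_{H^s}^2$ one gets a contribution only from $\{R,\|\cdot\|_{H^s}^2\}$, estimated again by the tame multilinear bound and integrable over $[0,T_\varepsilon]$. I expect the hardest step to be the simultaneous compatibility of the two ingredients — tame multilinear estimates that do not lose more than a fixed power of the third-largest index, and non-resonance estimates of exactly that shape for arbitrarily many indices — since this is precisely where the low regularity $s_0>d/2$ and the need to iterate up to $N\sim |\log\varepsilon|/\log|\log\varepsilon|$ times enter in conflict, and a careful accounting of constants is essential to close the bootstrap.
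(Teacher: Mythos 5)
Your high-level scaffolding — write \eqref{NLS} as $H_0+P$, iterate a BNF removing non-resonant monomials, optimize the order $r\sim|\log\varepsilon|/\log|\log\varepsilon|$, then close with a bootstrap and Gr\"onwall on a normal-form-equivalent $H^s$-type observable — matches the paper's architecture. But the core mechanism you propose for the small divisors is precisely the one the paper argues \emph{cannot} work at low regularity, and without replacing it the argument does not close.

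You take $\mathcal V$ to be a generic (full-measure) set of $V\in\ell^\infty$ with independently chosen coefficients, and you ask for non-resonance estimates of the form $|\Omega|\gtrsim \gamma\,\mu_3(k)^{-\alpha(q)}$ where $\mu_3$ is the third-largest index. This is the classical Bambusi--Gr\'ebert-type estimate (the paper's \eqref{eq:standard_SD_est}), and the paper explicitly states that the associated polynomial losses of derivatives are too large to run BNF with only $s_0>d/2$; this is the exact obstruction that has confined previous Nekhoroshev-type results to high regularity. Your own last paragraph identifies this as ``the hardest step'' and hopes careful constant-tracking resolves the tension between polynomial losses and the need to iterate $\sim|\log\varepsilon|/\log|\log\varepsilon|$ times; the paper's point is that no such constant-tracking can succeed, and a different structural idea is required.

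That idea is to give up genericity of $V$. The set $\mathcal V$ in the paper consists of potentials that are \emph{constant on dyadic blocks} $B_n$ (see \eqref{eq:def_V_rand}), so the linear spectrum has large multiplicities and is deliberately partially resonant inside each block. Correspondingly, one does not control individual actions $|u_k|^2$ but the dyadic super-actions $J_n=\sum_{k\in B_n}|u_k|^2$, and the BNF is only required to kill monomials violating \eqref{eq:monomials_to_remove}, i.e.\ those that move mass between blocks. There are then far fewer small divisors to estimate: in the probability bound the sum runs over block indices $n\simeq\log_2|k|$, not over $k$ itself, and one obtains the logarithmic lower bound \eqref{eq:new_SD_est}, $|\Omega(\boldsymbol{k},\boldsymbol{\ell})|\gtrsim \gamma q^{-4}\big(\log_2\max_j(|\boldsymbol{k}_j|,|\boldsymbol{\ell}_j|)\big)^{-(2q+1)}$, which carries only a logarithmic loss rather than a polynomial one. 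This is what permits $s_0>d/2$. Two related details are also missing from your sketch: the loss in \eqref{eq:new_SD_est} is in terms of the \emph{largest} index, not the third-largest, so the normal form flows are not directly obtained by a fixed-point argument; the paper handles this with a low/high truncation $L=L^{(\mathrm{low})}+L^{(\mathrm{high})}$ (Lemma~\ref{lem:outza}, Proposition~\ref{prop:maintech}) tied to a parameter $N=\varepsilon^{-r/\eta}$ chosen far larger than the usual $\varepsilon^{-\eta}$. And the normal-form piece $Z$ does not commute with the actions $|u_k|^2$ (it cannot, given the built-in block resonances); it only almost-commutes with the super-actions $J_n$, and the deviation is quantified in Proposition~\ref{prop:maintech} via the $N^{-\eta}$ gain.

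So the gap is concrete: your $\mathcal V$ and your small-divisor estimate are the wrong objects, and the step you flag as ``hard but hopefully closes with careful bookkeeping'' is in fact where the argument breaks. You need the block-degenerate potential, the super-action observables, the logarithmic small-divisor estimate, and the low/high truncation to replace the standard polynomial-loss machinery.
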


\begin{remark} The estimate \eqref{nous} means that, to control  the growth of the $H^s$ norm for very long times, the initial datum only needs to be small in $H^{s_0}$. In particular, as recently highlighted in \cite{FM22}, contrary to what is usually assumed, it does not have to be small in $H^s$ (moreover note that we do not have to assume that $s\geq s_0$).
\end{remark}

\subsection{Context and further bibliographical comments} Let us first situate this theorem in regard to previous results. First, we point out that the local well-posedness of \eqref{NLS} in $H^s$, $s>d/2$, provides a similar result but for much shorter times: we would only have $T_\varepsilon \simeq \varepsilon^{-2p}$. On the side of stability over long times, the work of Bambusi and Bambusi--Grebert (see \cite{Bam03, BG03, BG06}) established, by a normal form method, the following result concerning \eqref{NLS}: for large families of Fourier multipliers $V \in H^m(\mathbb{T}^d)$ with $m>0$, for $r\gg 1$ chosen arbitrarily large, $s\geq s_0(r)\gtrsim r^2$ and $\|u^{(0)}\|_{H^s}=\eps$ small enough, the existence time  of the solution of \eqref{NLS} is larger than $\varepsilon^{-r}$ and we have
$$ 
\| u(t)\|_{H^s}\leq 2 \eps \quad \text{for } |t|\leq  \eps^{-r}.
$$
In this result the time of stability is directly related to the regularity of the solution, we have stability in $H^s$ for time of order $\eps^{-c\sqrt s}$ where $c>0$ is a universal constant. In \cite{BMP20}, in dimension $d=1$, this time has been enlarged to  $\eps^{- cs}$. We also note that this result was extended in \cite{KillBill} to the case with $V=0$ but then for random initial data. The main flaw in all these results is that they apply only in very high regularity although the corresponding partial differential equations are locally well-posed in low regularity. Theorem \ref{th-main} relaxes this constraint: $s$ only need to be larger than $d/2$, this last constraint coming from the fact that we want to work in an algebra. We also note that in the result of Bambusi--Grebert, the coefficients of the Fourier multipliers are decreasing and actually correspond to potentials in $H^m$ which is not the case in Theorem \ref{th-main}. Moreover, the potential, $V\in \mathcal V$, we actually consider are not very generic in $\ell^\infty$ : the eigenvalues of the operator $u \mapsto V \ast u$ have a lot of multiplicities (see \eqref{eq:def_V_rand}). As we will see, this specificity is a key ingredient allowing us to have the stability result (see subsections \ref{sec:ideaproof} and \ref{sec:NR} for details).

On the side of instability, Colliander--Keel--Staffilani--Takaoka--Tao (see~\cite{CKSTT}),  considered  the cubic  nonlinear Schr\"odinger equation, on the two dimensional torus $\mathbb T^{2}$ without Fourier multipliers ($V=0$ in \eqref{NLS})
and proved that  for any $\varepsilon\ll 1$, any $K \gg 1$ and $s>1$ there exists a solution $u$ and a time $T $ such that 
$$
\|u(T )\|_{H^s} \geq K \quad \mathrm{and} \quad \|u(0 )\|_{H^s} \leq \varepsilon.
$$   
After that, Guardia--Kaloshin (see~\cite{GK15,GKerra}), proved a quantitative estimate on $T$ :
$$
0< T \leq e^{ \big( \frac{K}{\varepsilon} \big)^c}
$$
where $c>0$ is a constant depending only on $s$. A maybe less intuitive extension is then obtained by  M. Guardia (see~\cite{Gua14}): he proves that this "almost unbounded" behavior is not a consequence of the exact resonances, since it persists when one adds a convolution potential $V$, i.e. for \eqref{NLS} with $V\in H^{m}(\mathbb{T}^2) $, $m>0$.
Of course the stability time we obtain, although of exponential type,  is much shorter than the instability time obtained by Guardia. Furthermore  \eqref{nous} is obtained for bounded Fourier multipliers while the one of \cite{Gua14} requires a polynomial decay of these multipliers. In other words, to obtain the instability in \cite{Gua14} the linear frequencies are supposed to be asymptotically close to the fully resonant situation (i.e. $V=0$) while for \eqref{nous} this is not the case. We conjecture that our result is still true with some slow polynomial decay\footnote{i.e. $V$ would belong to some low regularity Sobolev spaces.} on the Fourier multipliers but we can glimpse a proof only for $d=1$. Nevertheless it appears that the fact of being asymptotically close (in Fourier variables) to the resonant case is decisive for the appearance of weak turbulence phenomena (of course we could easily extend our result to the case where $(V_k)_{k\in\mathbb Z^d}$ decreases logarithmically but this is not fair since this does not put $V$ in a Sobolev space).  \\
Regarding to the last sentence, it deserves to mention the following less turbulent case: Hani--Pausader--Tzvetkov--Visciglia considered in~\cite{HPTV} the cubic nonlinear Schr\"odinger equation on the wave-guide manifolds $\mathbb R\times \mathbb T^d$
 \begin{equation} \label{NLST2R}
 i\partial_t u +\Delta_{\mathbb{R}\times\mathbb{T}^d} u=\vert u\vert^2u , \quad (t,x,y)\in \mathbb R\times \mathbb R\times \mathbb T^d,
\end{equation}
and proved  that  when  $2\leq d\leq 4$ the equation admits unbounded solutions in $H^s$ for $s$ large enough.  However  when adding a "typical" convolution potential $V$ to~\eqref{NLST2R},
 it is proved in \cite{GPT} that all the small solutions remain bounded in $H^s$. So in this less turbulent case, the fact of having an exactly resonant linear part is decisive for the appearance of weak turbulence phenomena.\\
 We also mention a recent result by Giulliani--Guardia where the authors proved that the Colliander--Keel--Staffilani--Takaoka--Tao ideas still apply when considering irrational tori (see \cite{GG22}).

In finite dimension $n$, the standard Nekhoroshev result \cite{Nek77} controls the dynamics over times of order $\exp \big( -\alpha \eps^{-1/(\tau +1)} \big)$ for some $\alpha>0$ and $\tau>n+1$   (see for instance \cite{ben85, GG85, P93}) which is, of course, much better than $T_\varepsilon = \exp \big(-\frac{|\log\eps|^2}{4\log|\log\eps|} \big)$ Nevertheless, clearly this standard result does not extend to the infinite dimensional context, i.e. when $n\to +\infty$. Actually this kind of exponential times $ \exp \big(-\alpha \frac{|\log\eps|^2}{\log|\log\eps|} \big)$ were obtained by Benettin-Fr\"ohlich-Giorgilli in \cite{BFG88} for a Hamiltonian system with infinitely many degrees of freedom but with finite-range couplings. We also notice that this time was suggested by Bourgain as the optimal time that we could obtain in an analytical context (see eq. (2.14) in   \cite{Bou04b}).
 We note that in \cite{FG13} a Nekhoroshev result for \eqref{NLS} equation was proved in an analytical context and for time of order $\eps^{-\alpha|\ln \eps|^\beta}$ with $\beta<1$. See also mention \cite{BMP20} for results in Gevrey regularity and \cite{CCMW22} for a result in class of regularity between $C^\infty$ and Gevrey. We note that all these results with exponential stability time were proved for very regular solutions while we only assume, in Theorem \ref{th-main}, $H^{s_0}$ regularity with $s_0>d/2$. Actually, in our case, the exponential time is linked to the very good control of the small divisors that we obtain for Fourier multipliers in  $\mathcal V$ (see \eqref{eq:new_SD_est}).

\medskip

We point out that our theorem implies the so called almost global well-posedness of \eqref{NLS} on $H^s(\mathbb T^d)$ for any $d\geq 1$ and any $s>d/2$ when the Fourier multiplier $V$ is chosen in the non empty set $\mathcal V$. 
% Actually the global well-posedness of small solutions of \eqref{NLS} in its energy space $H^1$ is not clear. As far as we know it has been proved only for $d\leq2$ (see \cite{Vla} and also Theorem 3.6.1 page 78 of \cite{Caz}).
%We will construct large families of such $V\in\mathcal V$. {\color{blue} Pas terrible et dangereux : a discuter ?}

\medskip

We also point out that we have recently shown (see \cite{BG21}) a normal form result for \eqref{NLS} in weak regularity (in fact in the energy space $H^1$) in dimension $d=1$ or $d=2$ and almost surely with respect to the random Fourier coefficients of  potentials $V\in H^m$, $m>0$.  On one hand this later result is better because it holds true for almost all Fourier multipliers but the price we pay is that we essentially control only a finite (but large!) number of Fourier modes and in particular it cannot prove the almost global well-posedness. We note that both results are based on a new way of estimating the so-called small divisors. This result has been extended to the nonlinear Klein--Gordon in $\mathbb{S}^2$ in \cite{BGR}. As in this paper but for other reasons, it  was crucial to use partially resonant Birkhoff normal form, precisely normal forms that decompose the dynamics on large blocks of modes with an increasing size of the blocks (see \eqref{eq:def_V_rand} in our case). 
%deal with a partial resonant equation to get long time stability in low regularity.

\bigskip

\subsection{Ideas of the proof} \label{sec:ideaproof}
We now give an idea of the proof of Theorem \ref{th-main}. We do not try to explain the Birkhoff normal form procedure (which is quite classical, an introduction can be found in \cite{Bam07} or in \cite{G}) but rather to present the novelties of this paper.

First let us note that to control the $H^s$ of the solution it is enough to control the observable
\begin{equation}
\label{eq:def_Ns_J_n}
\mathcal N_s(u)=\sum_{n\geq0}(2^{n})^{2s} J_n \quad\mathrm{where} \quad J_n = \sum_{k\in B_n}|u_k|^2
\end{equation}
and $\mathbb Z^d=\bigcup_{n\geq 0} B_n$ stands for the standard dyadic decomposition\footnote{We point out that we chose this standard dyadic decomposition just for simplicity. Our result could be easily extended provided that the size of the blocks grows exponential fast.} of the Fourier space
\begin{equation}
\label{eq:def_dya}
\forall n \geq 1, \quad B_n=\{k\in\mathbb Z^d \ | \  2^n\leq |k|<2^{n+1}\} \quad\mathrm{and} \quad B_0=\{k\in\mathbb Z^d \ |\ |k|<2\}.
\end{equation}
Indeed, it is clear that $\sqrt{N_s}$ is a norm which is equivalent to the standard $H^s$ norm
$$ 
2^{-2s}\|u\|_{H^s}^2\leq \mathcal N_s(u)\leq \|u\|_{H^s}^2.
$$
Therefore, to control the variations of the $H^s$ norm of the solutions it is enough to control the (relative) variation of the \emph{super-actions} $J_n$. Note that it is useless to control the variations of each action $|u_k|^2$ or of the standard super-actions $\sum_{|k|=m} |u_k|^2$ (as it is usually done, see e.g. \cite{BG06,FG13,FGL13,YZ14,KillBill,BMP20,FGI20,FI19}). 

Concretely, it means that in the Birkhoff normal form procedure, we only have to remove all the monomials of the form $u_{\boldsymbol{k}_1} \dots u_{\boldsymbol{k}_q} \overline{u_{\boldsymbol{\ell}_1}} \dots \overline{u_{\boldsymbol{\ell}_q}}$, where $2\leq q$ and $\boldsymbol{k},\boldsymbol{\ell} \in (\mathbb{Z}^d)^q$ are such that 
\begin{equation}
\label{eq:monomials_to_remove}
\exists n\in \mathbb{N}, \quad \sharp \{ j\in \llbracket 1,q \rrbracket \ | \ \boldsymbol{k}_j \in B_n \} \neq \sharp \{ j\in \llbracket 1,q \rrbracket \ | \ \boldsymbol{\ell}_j \in B_n \}.
\end{equation}
Indeed, it is simple to check that the remaining ones commute with the super actions $J_n$. Therefore it is enough to control the \emph{small divisors} 
$$
\Omega(\boldsymbol{k},\boldsymbol{\ell})  :=2i( \omega_{\boldsymbol{k}_1} + \cdots +  \omega_{\boldsymbol{k}_q} -  \omega_{\boldsymbol{\ell}_1} - \cdots -  \omega_{\boldsymbol{\ell}_q})
$$
whenever $(\boldsymbol{k},\boldsymbol{\ell})$ is \emph{non-resonant} (i.e. it satisfies \eqref{eq:monomials_to_remove}) and the frequencies\footnote{which are the eigenvalues of the linearized vector field.} $\omega_k$, $k\in \mathbb{Z}^d$,  are defined by
$$
\omega_k := |k|^2 + (2\pi)^{-d/2} V_k.
$$
For potentials $V \in H^{m}$, $m>0$, drawn following classical probability laws, it is standard to establish lower bounds of the kind (see e.g. \cite{BG06})
\begin{equation}
\label{eq:standard_SD_est}
|\Omega(\boldsymbol{k},\boldsymbol{\ell})| \geq \gamma(q,V)\, \big(\max_{1\leq j\leq q} (\langle \boldsymbol{k}_j \rangle, \langle \boldsymbol{\ell}_j \rangle)\big)^{-\alpha q} \quad \mathrm{whenever} \quad \boldsymbol{k} \neq \boldsymbol{\ell} \quad \mathrm{up \ to \ a \ permutation}
\end{equation}
where $\alpha>0$ is a constant depending only on $m$ and $\gamma(q,V)>0$ depends only on $q$ and $V$. Of course, as usual, the maximum could be replaced by the third largest number (as in \cite{BG06}) or even by the minimum as in \cite{BG21}. Nevertheless, it seems that the "losses of derivatives" associated with such estimates are too big to hope to put \eqref{NLS} in Birkhoff normal form in low regularity. That is why, the previous almost global well-posedness results only deal with very smooth solutions to \eqref{NLS}.

In this paper, we take advantage of the fact that it is enough to have small divisor estimates when $(\boldsymbol{k},\boldsymbol{\ell})$ satisfies \eqref{eq:monomials_to_remove} to draw potentials $V$ which are much less generic but which enjoy much better small divisors estimates. For simplicity\footnote{this choice could be easily generalized.}, we consider potentials of the form
\begin{equation}
\label{eq:def_V_rand}
V(x) = \frac1{(2\pi)^{d/2}} \sum_{k\in \mathbb{Z}^d} V_k e^{ik \cdot x} \quad \mathrm{where} \quad V_k = X_n, \quad n\ \mathrm{being \ the \ index\ such\ that} \  k\in B_n
\end{equation}
and $X_n \sim \mathcal{U}(0,1)$ are independent random variables uniformly distributed in $[0,1]$. Note that, this choice makes \eqref{NLS} partially resonant : generically the frequencies inside a same block are not rationally independent and so, a priori there should be energy exchanges inside blocks. Thanks to these multiplicities (in the values of $(V_k)_k$) we have actually much less small divisors to estimate and so we have  much better lower bounds on them. Roughly speaking, in the probability estimates, we do not have to make converge sums with respect to $k$ but only sums with respect to $n \simeq \log_2 k$. Therefore using standard estimates, we prove in Lemma \ref{lemma:nonres} that, almost surely,
\begin{equation}
\label{eq:new_SD_est}
|\Omega(\boldsymbol{k},\boldsymbol{\ell})| \geq \gamma(q,V)\, \big(\log \max_{1\leq j\leq q} (\langle \boldsymbol{k}_j \rangle, \langle \boldsymbol{\ell}_j \rangle)\big)^{-(2q+1)} \quad \mathrm{whenever} \quad (\boldsymbol{k}, \boldsymbol{\ell}) \quad \mathrm{satisfies \ \eqref{eq:monomials_to_remove}.}
\end{equation}
We point out that the gain between the standard small divisor estimates \eqref{eq:standard_SD_est} and the new ones \eqref{eq:new_SD_est} is huge: we have replaced polynomial losses of derivatives by logarithmic ones. Moreover, using technics inspired by \cite{Del09,BFGI21},
these logarithms losses can almost be considered as constants. So it means that we have no losses in our small divisor estimates. It is the main novelty of this paper and the reason why we can prove the almost global well-posedness of \eqref{NLS} in low regularity (i.e. Theorem \ref{th-main}).

The rest of the proof is quite classical but contains some technicalities mainly due to the three following facts : 
\begin{itemize}
\item since we work in low regularity, it is harder to justify some standard formal computations. In particular, we approximate the non-smooth solutions by smooth solutions in order to prove that if $\tau^{(0)}$ is the change of variable associated with the Birkhoff normal form procedure then $v(t) := \tau^{(0)}(u(t))$ is time derivable (see \eqref{eq:newynamic} for details).
\item since we prove a Nekhoroshev result (i.e. stability for exponentially long times), we have to optimize the order of the normal form with respect to the size of the solution and so to track all the constants carefully.
\item the logarithmic losses associated with the small divisors involve the largest index (see \eqref{eq:new_SD_est}). They can be seen as logarithmic losses of derivatives at each step of the Birkhoff normal form procedure. Therefore, a priori, the Hamiltonian flows we have to introduce cannot be simply defined by a fix point argument. To overcome this technicality we introduce a truncation in the spirit of \cite{BFGI21} (see subsection \ref{sec:loglosses} for details).
\end{itemize}

\noindent
{\bf Notation.} 
We shall use the notation $A\lesssim B$ to denote $A\le C B$ where $C$ is a positive constant
depending on  parameters fixed once for all, 
for instance $d$ and $p$. 
We will emphasize by writing $\lesssim_{s}$ 
when the constant $C$ 
depends on some other parameter $s$.

\subsection{Acknowledgments} During the preparation of this work the authors benefited from the support of the Centre Henri Lebesgue ANR-11-LABX-0020-0 and J.B. was also supported by the region "Pays de la Loire" through the project "MasCan".

\section{Hamiltonian formalism and Birkhoff normal form}
\label{sec:Hamform}
The results and formalisms of this section are standard and quite similar to the ones of \cite{Bam03,BG06,BG21}. Nevertheless, since we aim at proving a Nekhoroshev result, we have to track carefully the constants and, since we work in low regularity, we have to pay attention to justify the formal computations.

\subsection{Functional setting} We use the standard functional setting to deal with Hamiltonian systems. Nevertheless to avoid any possible confusion we recall it precisely (and we refer to section 3.1 of \cite{BG21} for further details and comments).

We always identity any function $u\in L^1(\mathbb{T}^d;\mathbb{C})$ with the sequence of its Fourier coefficients
$$
u_k = (2\pi)^{-d/2} \int_{\mathbb{T}^d} u(x) e^{-i k \cdot x} \mathrm{d}x, \quad k\in \mathbb{Z}^d
$$
We also naturally extend this definition to any distribution $u \in \mathcal{D}'(\mathbb{T}^d;\mathbb{C})$ by continuity. With such a convention, for all $s \in \mathbb{R}$ and all $u\in H^s(\mathbb{T}^d;\mathbb{C})$, we have
$$
\|u\|_{H^s}^2 = \sum_{k\in \mathbb{Z}^d} \langle k \rangle^{2s} |u_k|^2
$$
and for $u\in L^2(\mathbb{T}^d;\mathbb{C})$, the Fourier inversion formula reads
$$
u(x) = (2\pi)^{-d/2} \sum_{k\in \mathbb{Z}^d} u_k e^{ik\cdot x}.
$$
We always consider $L^2(\mathbb{T}^d;\mathbb{C}) = H^0(\mathbb{T}^d;\mathbb{C})$ as a real vector space. So it is naturally equipped with the following scalar product
$$
\forall u,v\in L^2, \quad (u,v)_{L^2} := \Re \int_{\mathbb{T}^d} u(x) \overline{v(x)} \, \mathrm{d}x =  \Re \sum_{k\in \mathbb{Z}^d}  u_k \overline{v_k}.
$$
Identifying distributions with their Fourier coefficients, we also equip $\mathcal{D}'(\mathbb{T}^d;\mathbb{C})$ with the discrete $\ell^p$  norms $p\geq 1$,
$$
\| u \|_{\ell^p}^p = \sum_{k\in \mathbb{Z}^d} |u_k|^p \quad \mathrm{and} \quad \| u \|_{\ell^\infty} := \sup_{k\in \mathbb{Z}^d} |u_k|.
$$
Being given $s \in \mathbb{R}$, we define the $\ell^1_s$ norm by
$$
\| u\|_{\ell^1_s} :=  \sum_{k\in \mathbb{Z}^d} \langle k \rangle^s |u_k|. 
$$
As usual we extend this scalar product when $u\in H^s(\mathbb{T}^d;\mathbb{C})$ and $v\in H^{-s}(\mathbb{T}^d;\mathbb{C})$.
Being given a smooth function $P :\ell^1(\mathbb{Z}^d;\mathbb{C}) \to \mathbb{R}$ and $u\in \ell^1$, its gradient $\nabla P(u)$ is the unique element of $\ell^\infty(\mathbb{Z}^d;\mathbb{C})$ satisfying
$$
\forall v\in \ell^1(\mathbb{Z}^d;\mathbb{C}), \ (\nabla P(u),v)_{L^2} = \mathrm{d}P(u)(v).
$$
Note that it can be checked that
$$
\forall k \in \mathbb{Z}^d, \quad (\nabla P(u))_k = 2\partial_{\overline{u_k}} P(u).
$$
We equip $L^2(\mathbb{T}^d;\mathbb{C})$ of the usual symplectic form $(i\cdot,\cdot)_{L^2}$.  Therefore a smooth map $\tau : \Omega \to \ell^1(\mathbb{Z}^d;\mathbb{C})$, where $\Omega$ is an open set of $ \ell^1(\mathbb{Z}^d;\mathbb{C})$, is \emph{symplectic} if
$$
\forall u \in \Omega, \forall v,w \in  \ell^1(\mathbb{Z}^d;\mathbb{C}), \quad (iv,w)_{L^2} = (i\mathrm{d}\tau(u)(v),\mathrm{d}\tau(u)(w))_{L^2}.
$$
Moreover, if $P,Q :  \ell^1(\mathbb{Z}^d;\mathbb{C}) \to \mathbb{R}$ are two functions such that $\nabla P$ is  $ \ell^1(\mathbb{Z}^d;\mathbb{C})$ valued then the \emph{Poisson bracket} of $P$ and $Q$ is defined by
$$
\{  P,Q\}(u):= (i \nabla P(u),\nabla Q(u))_{L^2}.
$$
Note that, as usual, we have
\begin{equation}
\label{eq:Poisson_formula}
\{  P,Q\}  
  =2i \sum_{k\in \mathbb{Z}^d} \partial_{\overline{u_k}}P(u) \partial_{u_k} Q(u) - \partial_{u_k}P(u) \partial_{\overline{u_k}} Q(u).
\end{equation}

\subsection{A class of homogeneous polynomials}
In this section, we aim at establishing the main properties of the class of Hamiltonians defined just below. The two main results are Proposition \ref{prop:Poisson} in which we prove its stability by Poisson bracket and Proposition \ref{prop:Lie_transform} in which we study their Hamiltonian flows.
\begin{definition}[homogeneous polynomials]
\label{def:class_ham} For $q\geq 2$, let $\mathscr{H}_{2q}$ be the space of the homogeneous formal polynomials of degree $2q$ of the form
$$
P(u) =  \sum_{\boldsymbol{k},\boldsymbol{\ell}\in (\mathbb{Z}^d)^q } P_{\boldsymbol{k},\boldsymbol{\ell} } \, u_{\boldsymbol{k}_1} \dots u_{\boldsymbol{k}_q} \overline{u_{\boldsymbol{\ell}_1}} \dots \overline{u_{\boldsymbol{\ell}_q}}
$$
with $P_{\boldsymbol{k},\boldsymbol{\ell} }  \in \mathbb{C}$, satisfying the reality condition
\begin{equation}
\label{eq:def_real}
P_{\boldsymbol{\ell} , \boldsymbol{k}}  = \overline{P_{\boldsymbol{k},\boldsymbol{\ell} } }
\end{equation}
the symmetry condition 
\begin{equation}
\label{def:sym_cond}
\forall \phi,\sigma \in \mathfrak{S}_q, \quad P_{\phi \boldsymbol{k},\sigma\boldsymbol{\ell} } =  P_{\boldsymbol{k},\boldsymbol{\ell} }
\end{equation}
the zero momentum condition
\begin{equation}
\label{def:zeromomcond}
 P_{\boldsymbol{k},\boldsymbol{\ell} } \neq 0 \quad \Longrightarrow \quad \boldsymbol{k}_1 + \cdots + \boldsymbol{k}_q = \boldsymbol{\ell}_1 + \cdots + \boldsymbol{\ell}_q
\end{equation}
and the bound
\begin{equation}
\label{eq_norm_ham}
\| P \|_{\ell^\infty} = \! \! \! \sup_{ \boldsymbol{k},\boldsymbol{\ell}\in (\mathbb{Z}^d)^q } | P_{ \boldsymbol{k},\boldsymbol{\ell} } | < \infty. 
\end{equation}
\end{definition}

\begin{lemma}
\label{lem:val_ham} The formal homogeneous polynomials define naturally smooth real valued functions on $\ell^1(\mathbb{Z}^d)$. More quantitatively, if $q\geq 2$, $P\in \mathscr{H}_{2q}$ and $u^{(1)},\dots,u^{(2q)} \in \ell^1(\mathbb{Z}^d)$ we have 
 \begin{equation}
 \label{eq:bebete}
 \sum_{\boldsymbol{h}\in (\mathbb{Z}^d)^{2q} } | P_{\boldsymbol{h} }  u_{\boldsymbol{h}_1}^{(1)} \dots u_{\boldsymbol{h}_{2q}}^{(2q)}  | \leq \| P \|_{\ell^\infty}  \prod_{j=1}^{2q} \|u^{(j)} \|_{\ell^1}
 \end{equation}
 In other words, the multi-linear map naturally associated with $P$ is well defined and continuous on $\ell^1$.
\end{lemma}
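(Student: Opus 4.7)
The plan is to reduce everything to the single quantitative bound \eqref{eq:bebete}, since real-valuedness, absolute convergence, and smoothness will all follow as immediate corollaries. The proof should take only a few lines; I do not expect a genuine obstacle, only some bookkeeping about viewing $P$ as a multi-linear form restricted to a diagonal.

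First, I would establish \eqref{eq:bebete} directly: bound every coefficient by $\|P\|_{\ell^\infty}$ and apply Fubini--Tonelli to the resulting non-negative series,
$$\sum_{\boldsymbol{h}\in (\mathbb{Z}^d)^{2q}} |P_{\boldsymbol{h}}\, u^{(1)}_{\boldsymbol{h}_1}\cdots u^{(2q)}_{\boldsymbol{h}_{2q}}|\;\leq\; \|P\|_{\ell^\infty}\prod_{j=1}^{2q}\sum_{h\in\mathbb{Z}^d}|u^{(j)}_{h}|\;=\;\|P\|_{\ell^\infty}\prod_{j=1}^{2q}\|u^{(j)}\|_{\ell^1}.$$
Specializing this to $u^{(1)}=\cdots=u^{(q)}=u$ and $u^{(q+1)}=\cdots=u^{(2q)}=\overline{u}$ (which has the same $\ell^1$-norm as $u$) shows that the formal series defining $P(u)$ converges absolutely on $\ell^1(\mathbb{Z}^d)$, with $|P(u)|\leq \|P\|_{\ell^\infty}\|u\|_{\ell^1}^{2q}$. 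Here neither the symmetry condition \eqref{def:sym_cond} nor the zero-momentum condition \eqref{def:zeromomcond} plays any role; only \eqref{eq_norm_ham} is used.

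Next, I would deduce that $P$ is real-valued from \eqref{eq:def_real}: the involution $(\boldsymbol{k},\boldsymbol{\ell})\mapsto(\boldsymbol{\ell},\boldsymbol{k})$ on $(\mathbb{Z}^d)^{q}\times(\mathbb{Z}^d)^q$ pairs each term of the absolutely convergent defining series with its complex conjugate, hence $P(u)=\overline{P(u)}$. For smoothness, the bound \eqref{eq:bebete} says precisely that the $2q$-linear map
$$L:(v^{(1)},\dots,v^{(q)},w^{(1)},\dots,w^{(q)})\longmapsto \sum_{\boldsymbol{k},\boldsymbol{\ell}\in (\mathbb{Z}^d)^q} P_{\boldsymbol{k},\boldsymbol{\ell}}\, v^{(1)}_{\boldsymbol{k}_1}\cdots v^{(q)}_{\boldsymbol{k}_q}\, w^{(1)}_{\boldsymbol{\ell}_1}\cdots w^{(q)}_{\boldsymbol{\ell}_q}$$
is bounded, and therefore Fréchet-continuous, on $\ell^1(\mathbb{Z}^d)^{2q}$. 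Since the conjugation $u\mapsto\overline{u}$ is an $\mathbb{R}$-linear isometry of $\ell^1(\mathbb{Z}^d)$ and $P(u)=L(u,\dots,u,\overline{u},\dots,\overline{u})$, the map $P$ is a continuous polynomial of degree $2q$ on $\ell^1(\mathbb{Z}^d)$ seen as a real Banach space, hence of class $C^\infty$, with derivatives obtained by differentiating the monomial expansion term by term.
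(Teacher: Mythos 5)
Your proof is correct, and it takes essentially the same route as the paper: bound each coefficient by $\|P\|_{\ell^\infty}$, sum, and observe that the non-negative sum factorizes as a product of $\ell^1$ norms by Fubini--Tonelli. The treatment of real-valuedness via the involution $(\boldsymbol{k},\boldsymbol{\ell})\mapsto(\boldsymbol{\ell},\boldsymbol{k})$, and of smoothness by writing $P(u)$ as the restriction of the bounded $2q$-linear map $L$ to the ($\mathbb{R}$-linear) diagonal $u\mapsto(u,\dots,u,\overline{u},\dots,\overline{u})$, is exactly what the paper leaves implicit.

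One noteworthy point: you are right that \eqref{eq:bebete} uses \emph{only} the bound \eqref{eq_norm_ham}, and that neither the symmetry condition nor the zero momentum condition is needed. The paper's one-line proof asserts that the estimate ``is a direct consequence of the zero momentum condition,'' which is misleading in this instance; that hypothesis becomes genuinely essential only in Lemma \ref{lem:tech_est_ell1}, where one of the factors lies in $\ell^\infty$ rather than $\ell^1$, so that without \eqref{def:zeromomcond} the sum over the last index would not converge. In Lemma \ref{lem:val_ham} all $2q$ factors are already in $\ell^1$ and the sum factorizes unconditionally, as your argument makes clear. So your proof is not only correct but slightly sharper in identifying exactly which structural hypothesis is being used.
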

\begin{proof} 
 The estimate \eqref{eq:bebete} is a direct consequence of the zero momentum condition \eqref{def:zeromomcond}.
 Using the reality condition \eqref{eq:def_real}, it is straightforward to check that $P$ is real valued.
\end{proof}

\begin{corollary} 
\label{cor:permute_ok}
We can permute derivatives with the sum defining $P$.
\end{corollary}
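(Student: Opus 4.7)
The plan is to deduce the corollary directly from the absolute convergence provided by Lemma \ref{lem:val_ham}. Fix $k \in \mathbb{Z}^d$ and consider the partial derivative $\partial_{u_k}$ applied formally term-by-term to the series defining $P$. A monomial $u_{\boldsymbol{k}_1}\cdots u_{\boldsymbol{k}_q}\overline{u_{\boldsymbol{\ell}_1}}\cdots\overline{u_{\boldsymbol{\ell}_q}}$ is touched only when $k$ appears among the $\boldsymbol{k}_j$'s, and using the symmetry condition \eqref{def:sym_cond} one can collect those $q$ contributions into a single expression where $k$ plays the role of $\boldsymbol{k}_1$ with a combinatorial factor $q$ in front. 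Tested against an arbitrary direction $h\in\ell^1(\mathbb{Z}^d)$, the term-by-term differentiated series thus takes the shape of a $(2q)$-linear pairing with coefficients $P_{\boldsymbol{k},\boldsymbol{\ell}}$ evaluated on $(h,u,\ldots,u,\bar u,\ldots,\bar u)$, with $h$ replacing one of the entries.

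The key step is then to apply the estimate \eqref{eq:bebete} with $u^{(1)}=h$ and $u^{(j)}=u$ (or $\bar u$) for $j\geq 2$, which yields the uniform majorization
$$
\sum_{\boldsymbol{k},\boldsymbol{\ell}\in(\mathbb{Z}^d)^q} |P_{\boldsymbol{k},\boldsymbol{\ell}}|\,|h_{\boldsymbol{k}_1}|\prod_{j=2}^q |u_{\boldsymbol{k}_j}| \prod_{j=1}^q |u_{\boldsymbol{\ell}_j}| \;\leq\; \|P\|_{\ell^\infty}\, \|h\|_{\ell^1}\, \|u\|_{\ell^1}^{2q-1}.
$$
This provides a summable bound for the termwise-differentiated series that is uniform for $u$ in bounded subsets of $\ell^1$, so the Weierstrass $M$-test (equivalently, dominated convergence applied to finite partial sums) legitimates exchanging the limit defining $\partial_{u_k}$ with the sum over $(\boldsymbol{k},\boldsymbol{\ell})$. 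The antiholomorphic derivative $\partial_{\overline{u_k}}$ is treated identically, with $\bar h$ in place of $h$.

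I do not anticipate any real obstacle here: the only subtlety is that, since $L^2(\mathbb{T}^d;\mathbb{C})$ is regarded as a real vector space and the derivatives are taken in the Wirtinger sense, the holomorphic and antiholomorphic cases must both be handled, but the two arguments are symmetric. Iterating the same reasoning (always using \eqref{eq:bebete} with more and more of the $u^{(j)}$'s replaced by test directions) extends the permutability to partial derivatives of any order $\leq 2q$, which is the form in which the corollary will be used later to manipulate gradients and Poisson brackets via \eqref{eq:Poisson_formula}.
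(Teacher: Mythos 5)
Your approach aligns with the paper's: both rest on the absolute convergence secured by Lemma \ref{lem:val_ham}, which the paper summarizes as the ``classical'' fact that a continuous multi-linear map is automatically smooth. One caveat on the Weierstrass $M$-test step, though: the estimate you display bounds the \emph{total} of the differentiated series, which is not the same as exhibiting a termwise majorant, summable and independent of $u$ on a ball. The natural termwise candidate, obtained by replacing each $|u_{\boldsymbol{k}_j}|$ by its supremum $R$ over the ball $\{\|u\|_{\ell^1}\le R\}$, is $|P_{\boldsymbol{k},\boldsymbol{\ell}}|\,|h_{\boldsymbol{k}_1}|\,R^{2q-1}$, and this is \emph{not} summable over $(\boldsymbol{k},\boldsymbol{\ell})$, because $\|P\|_{\ell^\infty}$ only makes the coefficients bounded, not summable; so the $M$-test does not apply as stated. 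To repair this, argue pointwise at a fixed $u$: the difference quotients $t^{-1}\big(P(u+th)-P(u)\big)$ are termwise dominated, uniformly in small $t$, by a convergent series of the type \eqref{eq:bebete} evaluated at $|u|+|h|$, and one then passes to the limit $t\to 0$ by dominated convergence for series. Equivalently --- and this is precisely what the paper's one-line proof invokes --- note that a bounded $2q$-linear form on a Banach space is automatically $C^\infty$ with all Fr\'echet differentials given by polarization, and compose with the real-linear diagonal embedding $u\mapsto(u,\ldots,u,\bar u,\ldots,\bar u)$.
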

\begin{proof} It is a classical corollary of the continuity of the multi-linear maps associated with $P$.
\end{proof}

\begin{corollary}
\label{cor_uniq}
If $P\in \mathscr{H}_{2q}$ vanishes everywhere on $\ell^1$ then $P=0$ (i.e. all its coefficients vanish).
\end{corollary}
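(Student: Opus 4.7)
The plan is to reduce the problem to the finite-dimensional case by restricting $P$ to Fourier modes supported on a fixed finite set, and then to use a standard linear independence argument for Wirtinger monomials.

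Concretely, I would fix an arbitrary pair $(\boldsymbol{k}^0,\boldsymbol{\ell}^0) \in (\mathbb{Z}^d)^{2q}$ and pick any finite set $F \subset \mathbb{Z}^d$ containing all the entries of $\boldsymbol{k}^0$ and $\boldsymbol{\ell}^0$. For sequences $u \in \ell^1(\mathbb{Z}^d)$ supported in $F$, the bound \eqref{eq:bebete} guarantees that only the indices $\boldsymbol{k},\boldsymbol{\ell} \in F^q$ contribute, so $P(u)$ collapses to a genuine polynomial in the finitely many complex variables $(u_k)_{k\in F}$. Grouping terms according to the multiplicity functions $\alpha_k = \#\{j : \boldsymbol{k}_j = k\}$ and $\beta_k = \#\{j : \boldsymbol{\ell}_j = k\}$, and using the symmetry condition \eqref{def:sym_cond} to identify all coefficients within a group, this yields
$$
P(u) = \sum_{\substack{\alpha,\beta \in \mathbb{N}^F \\ |\alpha|=|\beta|=q}} \frac{q!}{\alpha!} \cdot \frac{q!}{\beta!} \, P_{\boldsymbol{k}(\alpha),\boldsymbol{\ell}(\beta)} \, u^\alpha \, \overline{u}^{\,\beta},
$$
where $\boldsymbol{k}(\alpha), \boldsymbol{\ell}(\beta)$ denote arbitrary representatives with the prescribed multiplicities.

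The second ingredient is the linear independence, as functions on $\mathbb{C}^F$, of the Wirtinger monomials $u^\alpha \overline{u}^{\,\beta}$. This is standard: writing $u_k = r_k e^{i\theta_k}$, the monomial equals $\prod_k r_k^{\alpha_k+\beta_k} e^{i(\alpha_k-\beta_k)\theta_k}$, and different pairs $(\alpha,\beta)$ produce distinct $(r,\theta)$-profiles—distinguishable by total radial degree and by their Fourier character in each $\theta_k$. Hence if $P$ vanishes identically on $\ell^1$ it vanishes on $\mathbb{C}^F$, which forces every coefficient in the display above to vanish, i.e. $P_{\boldsymbol{k},\boldsymbol{\ell}} = 0$ for every $\boldsymbol{k},\boldsymbol{\ell} \in F^q$. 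In particular $P_{\boldsymbol{k}^0,\boldsymbol{\ell}^0}=0$; since $(\boldsymbol{k}^0,\boldsymbol{\ell}^0)$ was arbitrary, all coefficients vanish.

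There is no real obstacle here: once Lemma \ref{lem:val_ham} ensures that the formal sum truly defines a function on $\ell^1$, the reduction to a finite set of Fourier modes eliminates any convergence issue, and the remainder is a finite-dimensional polynomial identity. The only care needed is the combinatorial bookkeeping of the symmetrization factors $q!/\alpha!$ and $q!/\beta!$, which is where the symmetry condition \eqref{def:sym_cond} is used decisively.
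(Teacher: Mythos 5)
Your proof is correct, and its mechanics differ from the paper's. The paper argues in one line: by Corollary \ref{cor:permute_ok} one may differentiate the series defining $P$ term by term, so the mixed Wirtinger derivative $\partial_{u_{\boldsymbol{k}_1}}\cdots\partial_{u_{\boldsymbol{k}_q}}\partial_{\overline{u_{\boldsymbol{\ell}_1}}}\cdots\partial_{\overline{u_{\boldsymbol{\ell}_q}}}P(0)$ equals $P_{\boldsymbol{k},\boldsymbol{\ell}}$ up to a nonzero combinatorial factor (precisely the multiplicity factors $q!/\alpha!$, $q!/\beta!$ that you write out), and it vanishes because $P\equiv 0$. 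You instead restrict to finitely supported $u$, collapse the series to a genuine finite polynomial, regroup by multiplicity profiles using \eqref{def:sym_cond}, and invoke linear independence of the monomials $u^\alpha\overline{u}^{\,\beta}$ via the polar-coordinate Fourier argument. Both routes lean on the symmetry condition for the combinatorial bookkeeping and on Lemma \ref{lem:val_ham} to give $P$ meaning as a function. What your route buys is that you never need to justify differentiating an infinite sum---once $u$ is supported in a finite $F$ the sum is literally finite, so Corollary \ref{cor:permute_ok} becomes unnecessary; what the paper's route buys is brevity, since that corollary is already available for other purposes.
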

\begin{proof}
It follows from the symmetry condition \eqref{def:sym_cond} and Corollary \ref{cor:permute_ok} that we have
$$
 P_{\boldsymbol{k},\boldsymbol{\ell} } \approx_{\boldsymbol{k},\boldsymbol{\ell} } \partial_{u_{\boldsymbol{k}_1}} \cdots \partial_{u_{\boldsymbol{k}_q}} \partial_{\overline{u_{\boldsymbol{\ell}_1}}} \cdots \partial_{\overline{u_{\boldsymbol{\ell}_q}}}    P(0) = 0.
$$
\end{proof}

Now, in the following proposition, we focus on the stability of the class by Poisson brackets.
\begin{proposition}\label{prop:Poisson} Let $q,q'\geq 2$, $P\in \mathscr{H}_{2q}$ and $Q\in \mathscr{H}_{2q'}$ be two homogeneous polynomials. Then, there exists a unique $S \in \mathscr{H}_{2(q+q'-1)}$ such that for all $u\in \ell^1(\mathbb{Z}^d)$, we have
$$
\{ P,Q\}(u) = S(u).
$$
Moreover, we have the estimate
$$
\| S \|_{\ell^\infty} \leq 4 \, q \, q' \| P \|_{\ell^\infty} \|Q\|_{\ell^\infty}.
$$
\end{proposition}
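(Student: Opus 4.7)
The plan is to compute $\{P,Q\}$ directly from the Poisson bracket formula \eqref{eq:Poisson_formula}. Using Corollary \ref{cor:permute_ok} together with the symmetry condition \eqref{def:sym_cond}, I would first derive the explicit identities
$$
\partial_{u_k} P(u) = q \!\!\! \sum_{\substack{\boldsymbol{k}' \in (\mathbb{Z}^d)^{q-1} \\ \boldsymbol{\ell} \in (\mathbb{Z}^d)^q}} \!\!\! P_{(k,\boldsymbol{k}'),\boldsymbol{\ell}} \, u_{\boldsymbol{k}'_1} \cdots u_{\boldsymbol{k}'_{q-1}} \overline{u_{\boldsymbol{\ell}_1}} \cdots \overline{u_{\boldsymbol{\ell}_q}},
$$
and analogous formulas for $\partial_{\overline{u_k}} P$, $\partial_{u_k} Q$, $\partial_{\overline{u_k}} Q$. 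Plugging these into \eqref{eq:Poisson_formula} and collecting terms produces a formal expression $\{P,Q\}(u) = \sum_{\boldsymbol{a},\boldsymbol{b}\in (\mathbb{Z}^d)^{q+q'-1}} T_{\boldsymbol{a},\boldsymbol{b}} \, u_{\boldsymbol{a}_1} \cdots u_{\boldsymbol{a}_{q+q'-1}} \overline{u_{\boldsymbol{b}_1}} \cdots \overline{u_{\boldsymbol{b}_{q+q'-1}}}$, where
$$
T_{\boldsymbol{a},\boldsymbol{b}} = 2i\, qq' \sum_{k\in \mathbb{Z}^d} \Bigl( P_{(\boldsymbol{a}_1,\dots,\boldsymbol{a}_q),(k,\boldsymbol{b}_1,\dots,\boldsymbol{b}_{q-1})} \, Q_{(k,\boldsymbol{a}_{q+1},\dots,\boldsymbol{a}_{q+q'-1}),(\boldsymbol{b}_q,\dots,\boldsymbol{b}_{q+q'-1})} \, - \, (*) \Bigr),
$$
with $(*)$ the analogous contribution coming from $\partial_{u_k} P \cdot \partial_{\overline{u_k}} Q$. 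I would then define the symmetrized coefficient $S_{\boldsymbol{a},\boldsymbol{b}} := ((q+q'-1)!)^{-2} \sum_{\phi,\sigma \in \mathfrak{S}_{q+q'-1}} T_{\phi\boldsymbol{a},\sigma\boldsymbol{b}}$; because the monomials are invariant under permutations of the entries of $\boldsymbol{a}$ and $\boldsymbol{b}$, this change does not modify the polynomial.

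The key step is the $\ell^\infty$ estimate. Here the decisive observation is that the zero momentum condition \eqref{def:zeromomcond} applied to $P$ forces the contraction index in the first summand of $T_{\boldsymbol{a},\boldsymbol{b}}$ to equal $k = \boldsymbol{a}_1+\cdots+\boldsymbol{a}_q - \boldsymbol{b}_1-\cdots-\boldsymbol{b}_{q-1}$, and analogously for the second summand. Thus each sum over $k$ reduces to a single term, and the triangle inequality yields
$$
|T_{\boldsymbol{a},\boldsymbol{b}}| \leq 4 qq' \|P\|_{\ell^\infty} \|Q\|_{\ell^\infty},
$$
a bound which is preserved by the averaging defining $S$.

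It remains to verify that $S$ satisfies the three defining conditions of $\mathscr{H}_{2(q+q'-1)}$. The symmetry condition \eqref{def:sym_cond} is built in by the averaging. The zero momentum condition \eqref{def:zeromomcond} follows by adding the zero momentum conditions for $P$ and $Q$, since the contraction variable $k$ cancels in the sum. The reality condition \eqref{eq:def_real} is obtained by checking directly that $\overline{T_{\boldsymbol{a},\boldsymbol{b}}} = T_{\boldsymbol{b},\boldsymbol{a}}$, using \eqref{eq:def_real} for $P$ and $Q$ together with the fact that complex conjugation exchanges the two terms in $T$. Uniqueness of $S$ is then immediate from Corollary \ref{cor_uniq}. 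I expect no deep obstacle: the main difficulty is simply keeping the combinatorial bookkeeping of the partition of $(\boldsymbol{a},\boldsymbol{b})$ between $P$ and $Q$ consistent through the symmetrization, which is why recognizing that zero momentum collapses the $k$-sum to a single term is crucial to avoid any combinatorial blow-up.
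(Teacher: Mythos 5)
Your proposal is correct and follows essentially the same approach as the paper: compute the partial derivatives explicitly using the symmetry condition, observe that the zero momentum condition collapses the contraction sum over $k$ to a single term (which is what yields the bound $4qq'\|P\|_{\ell^\infty}\|Q\|_{\ell^\infty}$), then symmetrize and verify the three defining conditions, with uniqueness following from Corollary~\ref{cor_uniq}.
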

\begin{proof}
Thanks to the symmetry condition \eqref{def:sym_cond} and the zero momentum condition \eqref{def:zeromomcond}, we note that for all $u\in \ell^1$ and $m\in \mathbb{Z}^d$
$$
\partial_{\overline{u_m}}P(u) = q  \sum_{\boldsymbol{k}_1 + \cdots + \boldsymbol{k}_q = \boldsymbol{\ell}_1 + \cdots + \boldsymbol{\ell}_{q-1} +m    } P_{\boldsymbol{k},\boldsymbol{\ell},m } \, u_{\boldsymbol{k}_1} \dots u_{\boldsymbol{k}_q} \overline{u_{\boldsymbol{\ell}_1}} \dots \overline{u_{\boldsymbol{\ell}_{q-1}}}
$$
and that a similar formula holds for $\partial_{u_m}P(u)$.
Therefore, as usual, we deduce that
\begin{equation*}
\begin{split}
\{  P,Q\}(u)
  &=2i q q'\sum_{m\in \mathbb{Z}^d} \partial_{\overline{u_m}}P(u) \partial_{u_m} Q(u) - \partial_{u_m}P(u) \partial_{\overline{u_m}} Q(u)\\
  &= \sum_{\boldsymbol{k}_1 + \cdots + \boldsymbol{k}_{q''} = \boldsymbol{\ell}_1 + \cdots + \boldsymbol{\ell}_{q''} } R_{\boldsymbol{k},\boldsymbol{\ell}}  u_{\boldsymbol{k}_1} \dots u_{\boldsymbol{k}_{q''}}  \overline{u_{\boldsymbol{\ell}_1}} \dots \overline{u_{\boldsymbol{\ell}_{q''}}}
\end{split}  
  \end{equation*}
  where we have set $q''=q+q'-1$, $R_{\boldsymbol{k},\boldsymbol{\ell}} = 2i q q'T_{\boldsymbol{k},\boldsymbol{\ell}}$,
  \begin{equation*}
  \begin{split}
  T_{\boldsymbol{k},\boldsymbol{\ell}}  =& P_{\boldsymbol{k}_1,\cdots,\boldsymbol{k}_q ,\boldsymbol{\ell}_1, \cdots,\boldsymbol{\ell}_{q-1} ,\boldsymbol{k}_1 + \cdots + \boldsymbol{k}_q -   \boldsymbol{\ell}_1 -\cdots - \boldsymbol{\ell}_{q-1}} Q_{\boldsymbol{k}_{q+1},\cdots,\boldsymbol{k}_{q''} ,  \boldsymbol{\ell}_q +\cdots + \boldsymbol{\ell}_{q''} -   \boldsymbol{k}_{q+1} -\cdots - \boldsymbol{k}_{q''} ,\boldsymbol{\ell}_q, \cdots,\boldsymbol{\ell}_{q''}} \\
  &- P_{\boldsymbol{k}_1,\cdots,\boldsymbol{k}_{q-1} ,  \boldsymbol{\ell}_1 +\cdots + \boldsymbol{\ell}_{q} -   \boldsymbol{k}_1 -\cdots - \boldsymbol{k}_{q-1} ,\boldsymbol{\ell}_1, \cdots,\boldsymbol{\ell}_{q}} Q_{\boldsymbol{k}_q,\cdots,\boldsymbol{k}_{q''} ,\boldsymbol{\ell}_{q+1}, \cdots,\boldsymbol{\ell}_{q''} ,\boldsymbol{k}_q + \cdots + \boldsymbol{k}_{q''} -   \boldsymbol{\ell}_{q+1} -\cdots - \boldsymbol{\ell}_{q''}}
  \end{split}  
  \end{equation*}
  and the converge of the series is ensured by Lemma \ref{lem:val_ham}. The coefficients $R_{\boldsymbol{k},\boldsymbol{\ell}}$ satisfy clearly the reality condition and enjoy the bound 
  $$
  |R_{\boldsymbol{k},\boldsymbol{\ell}}| \leq 4 \, q \, q' \| P \|_{\ell^\infty} \|Q\|_{\ell^\infty}.
  $$
   Moreover, they can be extended by zero in such a way that they enjoy the zero momentum condition \eqref{def:zeromomcond}. However, a priori, they do not satisfy the symmetry condition \eqref{def:sym_cond}, so we just have to set
  $$
  S_{\boldsymbol{k},\boldsymbol{\ell}} := ((q'')!)^{-2} \sum_{\phi,\sigma \in \mathfrak{S}_{q''}} R_{\phi \boldsymbol{k},\sigma \boldsymbol{\ell}} .
  $$
\end{proof}

Now, we are going to estimate the vector fields these Hamiltonians generate.
\begin{lemma}
\label{lem:tech_est_ell1} Let $q\geq 2$ and $P\in \mathscr{H}_{2q}$ be a homogeneous formal polynomial of degree $2q$. Then, if $u^{(1)},\dots,u^{(2q-1)} \in \ell^1(\mathbb{Z}^d)$ and $w \in \ell^\infty(\mathbb{Z}^d)$, we have 
 $$
 \sum_{\boldsymbol{h} \in (\mathbb{Z}^d)^{2q} } | P_{\boldsymbol{h} }  u_{\boldsymbol{h}_1}^{(1)} \dots u_{\boldsymbol{h}_{2q-1}}^{(2q-1)} w_{\boldsymbol{h}_{2q}} | \leq  \| P \|_{\ell^\infty} \|w \|_{\ell^\infty}    \prod_{1\leq j \leq 2q-1 }  \|u^{(k)} \|_{\ell^1}.
 $$
\end{lemma}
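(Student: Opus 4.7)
The approach is a direct extension of the proof of Lemma \ref{lem:val_ham}, with the zero momentum condition \eqref{def:zeromomcond} now playing an essential (rather than cosmetic) role: since $w$ is only $\ell^\infty$, we can no longer afford to sum freely over the index $\boldsymbol{h}_{2q}$, so we must use the momentum constraint to pin it down.

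Concretely, I would decompose $\boldsymbol{h} = (\boldsymbol{k},\boldsymbol{\ell})$ with $\boldsymbol{k}=(\boldsymbol{h}_1,\ldots,\boldsymbol{h}_q)$ and $\boldsymbol{\ell}=(\boldsymbol{h}_{q+1},\ldots,\boldsymbol{h}_{2q})$. By Definition \ref{def:class_ham}, the coefficient $P_{\boldsymbol{h}}$ vanishes whenever $\boldsymbol{h}_1+\cdots+\boldsymbol{h}_q \neq \boldsymbol{h}_{q+1}+\cdots+\boldsymbol{h}_{2q}$. Therefore, for each fixed tuple $(\boldsymbol{h}_1,\ldots,\boldsymbol{h}_{2q-1}) \in (\mathbb{Z}^d)^{2q-1}$, there exists at most one value of $\boldsymbol{h}_{2q}$, namely
$$
\boldsymbol{h}_{2q} = \boldsymbol{h}_1+\cdots+\boldsymbol{h}_q - \boldsymbol{h}_{q+1} - \cdots - \boldsymbol{h}_{2q-1},
$$
for which $P_{\boldsymbol{h}}\neq 0$.

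Combining this observation with the pointwise bounds $|P_{\boldsymbol{h}}|\le \|P\|_{\ell^\infty}$ and $|w_{\boldsymbol{h}_{2q}}|\le \|w\|_{\ell^\infty}$, I would conclude
\begin{align*}
\sum_{\boldsymbol{h}\in(\mathbb{Z}^d)^{2q}} \bigl| P_{\boldsymbol{h}}\, u^{(1)}_{\boldsymbol{h}_1}\cdots u^{(2q-1)}_{\boldsymbol{h}_{2q-1}}\, w_{\boldsymbol{h}_{2q}} \bigr|
&\le \|P\|_{\ell^\infty}\, \|w\|_{\ell^\infty} \!\!\!\sum_{(\boldsymbol{h}_1,\ldots,\boldsymbol{h}_{2q-1})\in(\mathbb{Z}^d)^{2q-1}} \prod_{j=1}^{2q-1} |u^{(j)}_{\boldsymbol{h}_j}| \\
&= \|P\|_{\ell^\infty}\, \|w\|_{\ell^\infty} \prod_{j=1}^{2q-1} \|u^{(j)}\|_{\ell^1},
\end{align*}
which is the desired inequality.

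There is no real obstacle in this argument: the entire content of the lemma is the remark that the zero momentum condition exactly compensates for the single slot whose factor only belongs to $\ell^\infty$. The estimate is nonetheless important because it is precisely what will allow, in the sequel, to control Hamiltonian vector fields $\nabla P$ of the form $\partial_{\overline{u_m}}P$ tested against $\ell^\infty$ sequences, and to treat the truncated flows appearing in the normal form construction.
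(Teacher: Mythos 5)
Your proof is correct and is exactly what the paper means when it states that the estimate is ``still a direct consequence of the zero momentum condition'': the momentum constraint determines $\boldsymbol{h}_{2q}$ from the remaining indices, leaving a free sum only over the $\ell^1$-factors. You have simply written out the details the paper leaves implicit.
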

\begin{proof}
 This estimate is still a direct consequence of the zero momentum condition \eqref{def:zeromomcond}.
\end{proof}
As a consequence, we get the following corollary  directly by duality.
\begin{corollary}  \label{cor:est_vfl1} Let $q\geq 2$, $P\in \mathscr{H}_{2q}$ be a homogeneous formal polynomial of degree $2q$. Then for all $u\in \ell^1$, $\nabla P(u) \in \ell^1$ and we have the estimates
$$
\| \nabla P(u) \|_{\ell^1} \leq 2q \| P \|_{\ell^\infty} \| u\|_{\ell^1}^{2q-1}.
$$
 Moreover, the map $\nabla P : \ell^1 \to  \ell^1$ is smooth and locally Lipschitz :
\begin{equation*}
\forall v\in \ell^1, \quad \| \mathrm{d}\nabla P(u)(v) \|_{ \ell^1 } \leq (2q)^2 \| P \|_{\ell^\infty} \| u\|_{\ell^1}^{2q-2}\| v\|_{\ell^1}.
\end{equation*}
\end{corollary}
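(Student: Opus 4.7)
The plan is to deduce both estimates directly from Lemma \ref{lem:tech_est_ell1} by a duality argument, exploiting the isometric identity $\|a\|_{\ell^1} = \sup_{\|w\|_{\ell^\infty}\leq 1}|\sum_m a_m \overline{w_m}|$ valid for any $a\in\ell^1(\mathbb{Z}^d)$.

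First I would use the symmetry condition \eqref{def:sym_cond}, the zero momentum condition \eqref{def:zeromomcond}, and the identity $(\nabla P(u))_m = 2\partial_{\overline{u_m}}P(u)$ to write, exactly as in the proof of Proposition \ref{prop:Poisson},
\begin{equation*}
(\nabla P(u))_m = 2q \sum_{\substack{\boldsymbol{k}\in(\mathbb{Z}^d)^q,\,\boldsymbol{\ell}\in(\mathbb{Z}^d)^{q-1}\\ \boldsymbol{k}_1+\cdots+\boldsymbol{k}_q = \boldsymbol{\ell}_1+\cdots+\boldsymbol{\ell}_{q-1}+m}} P_{\boldsymbol{k},\boldsymbol{\ell},m}\, u_{\boldsymbol{k}_1}\cdots u_{\boldsymbol{k}_q}\overline{u_{\boldsymbol{\ell}_1}}\cdots\overline{u_{\boldsymbol{\ell}_{q-1}}}.
\end{equation*}
Then, for any $w\in\ell^\infty(\mathbb{Z}^d)$, the pairing
\begin{equation*}
\sum_m |(\nabla P(u))_m|\,|w_m| \;\leq\; 2q\sum_{\boldsymbol{h}\in(\mathbb{Z}^d)^{2q}} |P_{\boldsymbol{h}}|\,|u_{\boldsymbol{h}_1}|\cdots|u_{\boldsymbol{h}_{2q-1}}|\,|w_{\boldsymbol{h}_{2q}}|
\end{equation*}
is precisely the content of Lemma \ref{lem:tech_est_ell1} applied with $u^{(j)} = (|u_k|)_{k\in\mathbb{Z}^d}$ for $j=1,\dots,2q-1$ and the test sequence $w$. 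This yields the upper bound $2q\,\|P\|_{\ell^\infty}\|u\|_{\ell^1}^{2q-1}\|w\|_{\ell^\infty}$. Taking the supremum over $w$ in the closed unit ball of $\ell^\infty$ proves that $\nabla P(u)\in\ell^1$ together with the first stated estimate.

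For the smoothness statement, I would observe that $\nabla P$ is a polynomial map of degree $2q-1$ in $(u,\overline{u})$ on $\ell^1$, hence $C^\infty$ as a real map. Differentiating the expansion above, the directional derivative $\mathrm{d}\nabla P(u)(v)$ decomposes into $2q-1$ sums, each obtained by replacing exactly one of the $u_{\boldsymbol{k}_j}$ by $v_{\boldsymbol{k}_j}$ or one of the $\overline{u_{\boldsymbol{\ell}_j}}$ by $\overline{v_{\boldsymbol{\ell}_j}}$. Applying the same duality trick and Lemma \ref{lem:tech_est_ell1} to each of these $2q-1$ sums (now with $2q-2$ factors $|u|$ and one factor $|v|$ in the roles of $u^{(1)},\dots,u^{(2q-1)}$) produces
\begin{equation*}
\|\mathrm{d}\nabla P(u)(v)\|_{\ell^1} \;\leq\; 2q(2q-1)\,\|P\|_{\ell^\infty}\|u\|_{\ell^1}^{2q-2}\|v\|_{\ell^1} \;\leq\; (2q)^2\|P\|_{\ell^\infty}\|u\|_{\ell^1}^{2q-2}\|v\|_{\ell^1},
\end{equation*}
which gives the second estimate. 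No serious obstacle is anticipated: the whole argument is essentially a repackaging of Lemma \ref{lem:tech_est_ell1}, and the only point requiring care is the combinatorial counting of the $2q-1$ terms produced by the differential (one for each factor available in $\partial_{\overline{u_m}}P$).
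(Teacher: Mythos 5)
Your proof is correct and follows exactly the approach the paper intends when it states that Corollary~\ref{cor:est_vfl1} ``follows directly by duality'' from Lemma~\ref{lem:tech_est_ell1}: you pair $\nabla P(u)$ against an arbitrary $w \in \ell^\infty$, use the explicit formula for $\partial_{\overline{u_m}}P$ (with the combinatorial factor $q$, hence $2q$ for the gradient), invoke the lemma, and take the supremum over the unit ball. The count of $2q-1$ terms produced by differentiating (giving $2q(2q-1)\leq (2q)^2$) likewise matches the stated constant, so nothing is missing.
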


As a consequence, we are in position to study the existence of Hamiltonian flows in $\ell^1(\mathbb{Z}^d)$.
\begin{proposition}[Lie transform] \label{prop:Lie_transform}Let $q\geq 2$, $\chi \in \mathscr{H}_{2q}$ and 
$$
\varepsilon_\chi  := \frac14 (2q \| \chi \|_{\ell^\infty})^{-\frac1{2q-2}}.
$$
Then there exists $C^{\infty}$ map $\Phi_{\chi} : [-1,1] \times B_{\ell^1}(0, \varepsilon_\chi ) \to \ell^1(\mathbb{Z}^d)$ such that if $|t| \leq 1$ and $\| u \|_{ \ell^1 } < \varepsilon_\chi$, we have
\begin{equation}
\label{eq:cestleflot}
-i\partial_t \Phi_{\chi}^t(u) = \nabla \chi( \Phi_{\chi}^t(u)) \quad \mathrm{and} \quad  \Phi_{\chi}^0(u) = u.
\end{equation}
Moreover, being given $\| u \|_{ \ell^1 } < \varepsilon_\chi$ and $|t|\leq 1$, it enjoys the following properties :
\begin{enumerate}[i)]
\item $\Phi_{\chi}^t$ is symplectic :
$$
\forall v,w \in \ell^1, \quad (iv,w)_{L^2} = (i\mathrm{d}\Phi_{\chi}^t(u)(v),\mathrm{d}\Phi_{\chi}^t(u)(w))_{L^2}.
$$
\item $\Phi_{\chi}^t$ is invertible :
\begin{equation}
\label{eq:invertPhichi}
\|  \Phi_{\chi}^t(u)  \|_{ \ell^1 } < \varepsilon_\chi  \quad \Rightarrow \quad \Phi_{\chi}^{-t} ( \Phi_{\chi}^{t}(u)) =u.
\end{equation}
\item $\Phi_{\chi}^t$ is close to the identity : 
\begin{equation}
\label{eq:closeidl1}
\| \Phi_{\chi}^t(u) -u \|_{\ell^1} \leq  \left( \frac{\| u\|_{\ell^1}}{\varepsilon_\chi} \right)^{2q-2} \| u\|_{\ell^1}.
\end{equation}
\item $\Phi_{\chi}^t$ is locally Lipschitz :
\begin{equation}
\label{eq:closeiddiffl1}
\forall v\in \ell^1, \quad \| \mathrm{d} \Phi_{\chi}^t(u)(v) \|_{\ell^1} \leq 2 \|v\|_{\ell^1}.
\end{equation}
\end{enumerate}
\end{proposition}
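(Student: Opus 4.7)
\emph{Setup.} By Corollary \ref{cor:est_vfl1}, the vector field $u \mapsto i\nabla \chi(u)$ is smooth and locally Lipschitz from $\ell^1(\mathbb{Z}^d)$ to itself. The Cauchy--Lipschitz theorem in the Banach space $\ell^1$ therefore produces a unique smooth maximal flow $\Phi_\chi^t(u)$ solving \eqref{eq:cestleflot}, with $C^\infty$ dependence on $(t,u)$. It then remains to show that this flow is defined on all of $[-1,1] \times B_{\ell^1}(0,\varepsilon_\chi)$ and to establish the four quantitative properties.

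\emph{A priori bound and property (iii).} Writing $C := 2q\|\chi\|_{\ell^\infty}$, so that by the definition of $\varepsilon_\chi$ one has $C\varepsilon_\chi^{2q-2} = 4^{-(2q-2)}$, Corollary \ref{cor:est_vfl1} gives the differential inequality $\tfrac{d}{dt}\|\Phi_\chi^t(u)\|_{\ell^1} \leq C\|\Phi_\chi^t(u)\|_{\ell^1}^{2q-1}$. Comparing with the scalar ODE $y' = Cy^{2q-1}$ yields, for $\|u\|_{\ell^1} < \varepsilon_\chi$ and $|t|\leq 1$,
$$\|\Phi_\chi^t(u)\|_{\ell^1}^{2q-2} \leq \frac{\|u\|_{\ell^1}^{2q-2}}{1 - (2q-2)\,C\,\|u\|_{\ell^1}^{2q-2}|t|},$$
and since $(2q-2)\,4^{-(2q-2)} \leq 1/2$ for every $q\geq 2$, this is bounded by $2^{2q-2}\|u\|_{\ell^1}^{2q-2}$. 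Hence no blow-up occurs on $[-1,1]$, giving existence on the claimed domain. Property \eqref{eq:closeidl1} then follows immediately from the Duhamel-type identity $\Phi_\chi^t(u) - u = \int_0^t i\nabla\chi(\Phi_\chi^s(u))\,\mathrm{d}s$ combined with Corollary \ref{cor:est_vfl1} and this a priori bound.

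\emph{Properties (ii), (iv), and (i).} For invertibility \eqref{eq:invertPhichi}, the hypothesis $\|\Phi_\chi^t(u)\|_{\ell^1} < \varepsilon_\chi$ allows us to run the flow backwards from $\Phi_\chi^t(u)$; the curves $s \mapsto \Phi_\chi^{s-t}(\Phi_\chi^t(u))$ and $s \mapsto \Phi_\chi^s(u)$ then both solve \eqref{eq:cestleflot} and coincide at $s=t$, hence everywhere by uniqueness. For \eqref{eq:closeiddiffl1}, differentiating \eqref{eq:cestleflot} in the initial datum produces the linear equation $-i\partial_t \mathrm{d}\Phi_\chi^t(u)(v) = \mathrm{d}\nabla\chi(\Phi_\chi^t(u))(\mathrm{d}\Phi_\chi^t(u)(v))$; the Lipschitz bound of Corollary \ref{cor:est_vfl1} evaluated against the a priori bound gives an integrated coefficient bounded uniformly by $\log 2$ for every $q\geq 2$, precisely because $\varepsilon_\chi$ was tuned with the factor $\tfrac14$, and Gronwall then delivers the factor $2$. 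Finally, for the symplectic property (i), differentiate $t \mapsto (i\mathrm{d}\Phi_\chi^t(u)(v),\mathrm{d}\Phi_\chi^t(u)(w))_{L^2}$ using this same linear equation: the two resulting terms cancel because $\mathrm{d}\nabla\chi(\Phi_\chi^t(u))$ is self-adjoint for $(\cdot,\cdot)_{L^2}$, which is a direct consequence of the symmetry of the Hessian of the real-valued smooth function $\chi$.

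\emph{Main obstacle.} The whole argument is a standard Cauchy--Lipschitz plus Gronwall scheme in a Banach space; the only subtle point is the careful numerical bookkeeping, where the factor $\tfrac14$ in the definition of $\varepsilon_\chi$ is precisely chosen so that the bootstrap yielding \eqref{eq:closeidl1} and the Gronwall step yielding \eqref{eq:closeiddiffl1} both close with the stated constants uniformly in $q\geq 2$.
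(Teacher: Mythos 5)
Your proof is correct and follows essentially the paper's strategy: Cauchy--Lipschitz for local existence and smoothness (via Corollary \ref{cor:est_vfl1}), an $\ell^1$ a priori bound to reach existence on $[-1,1]$, Duhamel for property (iii), the flow property for (ii), Gronwall on the linearized equation for (iv), and conservation of the symplectic form for (i). The one genuine departure is how the a priori bound is obtained: you compare $\tfrac{d}{dt}\|\Phi_\chi^t(u)\|_{\ell^1}$ with the scalar Bernoulli ODE $y' = C y^{2q-1}$ and solve it explicitly, whereas the paper runs a bootstrap/continuity argument, showing that the set $\{t : \|\Phi_\chi^\tau(u)\|_{\ell^1}\le 2\|u\|_{\ell^1}\ \text{for all}\ \tau\in[0,t]\}$ is nonempty, open, and closed. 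Both are elementary and valid; your ODE comparison in fact gives the sharper bound $\|\Phi_\chi^t(u)\|_{\ell^1}^{2q-2}\le \tfrac{\|u\|_{\ell^1}^{2q-2}}{1-(2q-2)C\|u\|_{\ell^1}^{2q-2}|t|}\le 2\,\|u\|_{\ell^1}^{2q-2}$, i.e. $\|\Phi_\chi^t(u)\|_{\ell^1}\le 2^{1/(2q-2)}\|u\|_{\ell^1}$, and this sharper form is what you implicitly need to close the Gronwall step for (iv). Note a small internal inconsistency in your write-up: the line ``this is bounded by $2^{2q-2}\|u\|_{\ell^1}^{2q-2}$'' only records the weaker bound $\|\Phi_\chi^t(u)\|_{\ell^1}\le 2\|u\|_{\ell^1}$, and with that bound the Gronwall coefficient at $q=2$ is $1$, giving $e$ rather than $2$; keeping the tight bound $\|\Phi_\chi^t(u)\|_{\ell^1}^{2q-2}\le 2\|u\|_{\ell^1}^{2q-2}$ drives the coefficient down to at most $\tfrac12<\log 2$ uniformly in $q\ge 2$, and property (iv) then closes as you assert. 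With that one line fixed the argument is complete and the constants match.
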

\begin{proof} Since, by Corollary \ref{cor:est_vfl1}, the vector field $i\nabla \chi$ is locally-Lipschitz, the local existence and the smoothness of the flow $\Phi_{\chi}^t$ is ensured by the Cauchy-Lipchitz Theorem. The only thing we have to check is that the solutions exist for $|t|\leq 1$. Without loss of generality we only consider positive times. More precisely, let $T>0$ and $v\in C^1([0,T); \ell^1)$ be a solution of the Cauchy problem
$$
-i\partial_t v(t) = \nabla \chi(v(t)) \quad \mathrm{and} \quad v(0) = u \in B_{\ell^1}(0, \varepsilon_\chi ).
$$
It is enough to prove that if $u\neq 0$, $0\leq t< T$ and $t \leq 1$ then $\| v(t) \|_{\ell^1} \leq  2 \| u \|_{\ell^1} < 2 \varepsilon_\chi$. We set $I = [0,T) \cap [0,1]$ and we aim at proving that $S=I$ where
$$
S = \{ t \in I \ | \ \forall \tau \in [0,t], \ \|v(\tau)\| \leq  2 \| u \|_{\ell^1} \}.
$$
Since $v$ is continuous, $S$ is clearly non-empty and closed in $I$. Moreover, if $t\in I$ then
\begin{multline}
\label{eq:tresrelou}
\| v(t) - u \|_{\ell^1} \leq \int_0^t \| \nabla \chi(v(\tau)) \|_{\ell^1} \mathrm{d}\tau \leq  2q t \| \chi \|_{\ell^\infty}  (2\| u\|_{\ell^1})^{2q-1} \leq 2^{2q-1}  \left( \frac{\| u\|_{\ell^1}}{4\varepsilon_\chi} \right)^{2q-2} \| u\|_{\ell^1} \\
\leq 2^{-2q + 3}  \left( \frac{\| u\|_{\ell^1}}{\varepsilon_\chi} \right)^{2q-2} \| u\|_{\ell^1} 
\end{multline}
and so $\| v(t) \|_{\ell^1} \leq (1+2^{-2q + 3} ) \| u \|_{\ell^1} < 2 \| u \|_{\ell^1} $. Therefore, since $v$ is continuous, $S$ is open and so, since $I$ is connected, we have $S = I$.

Now, that we have checked the existence of $\Phi_\chi$, we focus on properties $i),ii)$ and $iii)$. First, the property $ii)$ is ensured by the fact that $\Phi_{\chi}$ is a flow.  Moreover the property $iii)$ has been proven in \eqref{eq:tresrelou}. Finally, since $\Phi_\chi$ is a Hamiltonian flow, it is standard to check $i)$ (i.e. that $\Phi_{\chi}^t$ is symplectic).

Finally, we focus on $iv)$. If $w\in \ell^1$, we have
\begin{equation}
\label{eq:ladernierejespere}
-i\partial_t \mathrm{d} \Phi_\chi^t(u)(w) = \mathrm{d} \nabla \chi (v(t))( \mathrm{d} \Phi_\chi^t(u)(w))
\end{equation}
and thus
\begin{equation*}
\begin{split}
\|  \mathrm{d} \Phi_\chi^t(u)(w) -w \|_{\ell^1} &\leq \int_{0}^t \|  \mathrm{d} \nabla \chi (v(\tau))( \mathrm{d} \Phi_\chi^{\tau}(u)(w)) \|_{\ell^1}\mathrm{d} \tau \\
&\leq  (2q)^2  \| \chi \|_{\ell^\infty}  \int_{0}^t  \| v(\tau)\|_{\ell^1}^{2q-2} \|  \mathrm{d} \Phi_\chi^{\tau}(u)(w) \|_{\ell^1}\mathrm{d} \tau \\
&\leq \left( \frac{\| u\|_{\ell^1}}{\varepsilon_\chi} \right)^{2q-2}  \int_{0}^t  \|  \mathrm{d} \Phi_\chi^{\tau}(u)(w) \|_{\ell^1}\mathrm{d} \tau.
\end{split}
\end{equation*}
Therefore, as a consequence of Gr\"onwall's inequality, we get  \eqref{eq:closeiddiffl1}.
\end{proof}

Now, we aim at establishing $H^s$ tame estimates.
\begin{lemma}[$H^s$ tame estimates]
\label{lem:tech_est} Let $s\geq 0$, $q\geq 2$ and $P\in \mathscr{H}_{2q}$ be a homogeneous formal polynomial of degree $2q$. Then, if $u^{(1)},\dots,u^{(2q-1)} \in \ell^1 \cap H^s$ and $w \in H^{-s}$, we have 
 $$
 \sum_{\boldsymbol{h} \in (\mathbb{Z}^d)^{2q} } | P_{\boldsymbol{h} }  u_{\boldsymbol{h}_1}^{(1)} \dots u_{\boldsymbol{h}_{2q-1}}^{(2q-1)} w_{\boldsymbol{h}_{2q}} | \leq (2q-1)^{(s-1)_+}  \| P \|_{\ell^\infty} \|w \|_{H^{-s}} \sum_{j=1}^{2q-1}  \|u^{(j)} \|_{H^s}   \prod_{k\neq j }  \|u^{(k)} \|_{\ell^1}
 $$
 where $(s-1)_+ = \max(s-1,0)$.
\end{lemma}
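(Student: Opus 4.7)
My plan is to use the zero momentum condition of Definition~\ref{def:class_ham} to trade the weight $\langle \boldsymbol{h}_{2q}\rangle^s$ against weights on one of the other indices, and then to close the resulting sum by a single Cauchy--Schwarz pairing of $u^{(j)}$ in $H^s$ against $w$ in $H^{-s}$.

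The first step would be to observe that whenever $P_{\boldsymbol h}\neq 0$, the zero momentum condition \eqref{def:zeromomcond} forces $\boldsymbol h_{2q}$ to equal a signed sum of $\boldsymbol h_1,\dots,\boldsymbol h_{2q-1}$ with coefficients $\pm 1$, so in particular $\langle \boldsymbol h_{2q}\rangle \leq \sum_{j=1}^{2q-1}\langle \boldsymbol h_j\rangle$. Raising to the $s$-th power, subadditivity of $t\mapsto t^s$ when $0\leq s\leq 1$ and the power-mean (Jensen) inequality when $s\geq 1$ combine into the single pointwise bound
$$
\langle \boldsymbol h_{2q}\rangle^s \leq (2q-1)^{(s-1)_+}\sum_{j=1}^{2q-1}\langle \boldsymbol h_j\rangle^s,
$$
valid as soon as $P_{\boldsymbol h}\neq 0$, which accounts precisely for the constant that appears in the statement.

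I would then insert $1=\langle \boldsymbol h_{2q}\rangle^{-s}\langle \boldsymbol h_{2q}\rangle^s$ inside the sum on the left-hand side and apply the pointwise bound above. Distributing the resulting sum over $j$, the estimate reduces to controlling, for each $j\in\llbracket 1,2q-1\rrbracket$, the quantity
$$
\Sigma_j:=\sum_{\boldsymbol h\in(\mathbb Z^d)^{2q}} |P_{\boldsymbol h}|\,\langle \boldsymbol h_j\rangle^s|u^{(j)}_{\boldsymbol h_j}|\Big(\prod_{\substack{i\neq j\\ i\leq 2q-1}}|u^{(i)}_{\boldsymbol h_i}|\Big)\langle \boldsymbol h_{2q}\rangle^{-s}|w_{\boldsymbol h_{2q}}|.
$$
Here the momentum relation is a single linear equation in which every $\boldsymbol h_j$ appears with coefficient $\pm 1$, so once the $\boldsymbol h_i$ with $i\neq j$, $i\leq 2q-1$, are fixed, the map $\boldsymbol h_j\mapsto\boldsymbol h_{2q}$ is a bijection of $\mathbb Z^d$. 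After bounding $|P_{\boldsymbol h}|\leq\|P\|_{\ell^\infty}$, Cauchy--Schwarz on the inner sum in $\boldsymbol h_j$ (together with the translation invariance of the $\ell^2$-norm) produces the factor $\|u^{(j)}\|_{H^s}\|w\|_{H^{-s}}$; summing freely the remaining $2q-2$ indices gives $\prod_{i\neq j,\,i\leq 2q-1}\|u^{(i)}\|_{\ell^1}$. Collecting these terms and summing in $j$ yields the announced inequality.

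I do not expect a serious obstacle: this is really a textbook $\ell^2$-pairing argument, and the whole content of the lemma is the observation that the zero momentum condition lets one transfer the $H^s$-weight from the ``hardest'' index $\boldsymbol h_{2q}$ to any of the other ones at the price of a harmless constant $(2q-1)^{(s-1)_+}$. The only step requiring any care is keeping track of the bijection $\boldsymbol h_j\mapsto\boldsymbol h_{2q}$, whose sign depends on whether the indices play the role of a $\boldsymbol k$ or an $\boldsymbol\ell$ in the momentum relation --- irrelevant here since we work with absolute values.
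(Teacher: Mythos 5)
Your proof is correct and takes essentially the same route as the paper's: both first trade the weight on $\boldsymbol h_{2q}$ for weights on the other indices via the zero momentum condition (picking up the factor $(2q-1)^{(s-1)_+}$ by subadditivity/Jensen), and then close the resulting constrained sum with a single $\ell^2$ pairing. The paper phrases that last step as Young's convolution inequality after introducing the auxiliary sequence $u^{(2q)}_k=\langle k\rangle^{-2s}w_{-k}$, whereas you make the underlying Cauchy--Schwarz and the bijection $\boldsymbol h_j\mapsto\boldsymbol h_{2q}$ explicit; these are the same computation.
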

\begin{proof} We define $u^{(2q)}_k = \langle k \rangle^{-2s} w_{-k}$ in order to have $\|u^{(2q)} \|_{H^s} = \|w\|_{H^{-s}}$. Using the zero momentum condition, we have
$$
\mathcal{J}:=\sum_{\boldsymbol{h} \in (\mathbb{Z}^d)^{2q} } | P_{\boldsymbol{h} }  u_{\boldsymbol{h}_1}^{(1)} \dots u_{\boldsymbol{h}_{2q-1}}^{(2q-1)} w_{\boldsymbol{h}_{2q}}| \leq \| P \|_{\ell^\infty} \sum_{  \sigma_1  \boldsymbol{h}_1 + \cdots + \sigma_{2q}\boldsymbol{h}_{2q} =0 } | u_{\boldsymbol{h}_1}^{(1)} | \dots |u_{\boldsymbol{h}_{2q}}^{(2q)} |  \, \langle \boldsymbol{h}_{2q} \rangle^{2s}.
$$
where $\sigma_j = 1$ if $j \leq q$ and $\sigma_j = -1$ else.
Since, by Jensen (and the triangle inequality), we have
$$
\langle  \sigma_1  \boldsymbol{h}_1 + \cdots + \sigma_{2q-1}\boldsymbol{h}_{2q-1}   \rangle^{s} \leq (2q-1)^{(s-1)_+}(\langle \boldsymbol{h}_{1}\rangle^{s} + \cdots + \langle \boldsymbol{h}_{2q-1} \rangle^{s})
$$
where $(s-1)_+ = \max(s-1,0)$, applying the Young's inequality for convolutions, we deduce that
\begin{equation*}
\mathcal{J} \leq  \| P \|_{\ell^\infty} (2q-1)^{(s-1)_+}  \| u_{\boldsymbol{h}_{2q}}^{(2q)} \|_{H^s}  \sum_{j=1}^{2q-1} \| u_{\boldsymbol{h}_{j}}^{(j)} \|_{H^s} \prod_{k\neq j} \| u_{\boldsymbol{h}_{k}}^{(k)} \|_{\ell^1}.
\end{equation*}
\end{proof}
As a consequence, the following corollary follows directly by duality (see \cite{BG21} for more details).
\begin{corollary}  \label{cor:est_vf} Let $q\geq 2$, $s\geq 0$ and $P\in \mathscr{H}_{2q}$ be a homogeneous formal polynomial of degree $2q$. Then for all $u\in \ell^1 \cap H^s$, $\nabla P(u) \in H^s$ and we have the estimates
\begin{equation}
\label{eq:mangeralafourmis}
\| \nabla P(u) \|_{H^s} \leq 2q (2q-1)^{1+(s-1)_+}  \| P \|_{\ell^\infty} \| u\|_{\ell^1}^{2q-2} \| u\|_{H^{s}}
\end{equation}
 Moreover, the map $\nabla P : \ell^1 \cap H^s \to H^s$ is smooth and locally Lipschitz : for all $v\in  \ell^1 \cap H^s$
\begin{equation}
\label{eq:mangeralacigalle}
\| \mathrm{d}\nabla P(u)(v) \|_{H^s} \leq 2q  (2q-1)^{2+(s-1)_+} \| P \|_{\ell^\infty} \| u\|_{\ell^1}^{2q-3} (\|u\|_{H^s}\| v\|_{\ell^1}+\| u\|_{\ell^1}\|v\|_{H^s}).
\end{equation}
\end{corollary}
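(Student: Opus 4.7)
My plan is to prove both estimates by exactly the duality argument the author sketches, using Lemma~\ref{lem:tech_est} as a black box and tracking the combinatorial factors carefully.

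For \eqref{eq:mangeralafourmis}, I would start from the fact that by Corollary~\ref{cor:est_vfl1}, $\nabla P(u)\in\ell^1\subset L^2$, so the pairing $(\nabla P(u),w)_{L^2}$ makes sense for any $w\in\ell^1\cap H^{-s}$. By density of $\ell^1\cap H^{-s}$ in $H^{-s}$ and the characterization of the $H^s$ norm by duality, it suffices to prove
\[
|(\nabla P(u),w)_{L^2}|=|\mathrm{d}P(u)(w)|\leq 2q(2q-1)^{1+(s-1)_+}\|P\|_{\ell^\infty}\|u\|_{\ell^1}^{2q-2}\|u\|_{H^s}\|w\|_{H^{-s}}
\]
for all such $w$. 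Using the symmetry condition \eqref{def:sym_cond}, I would expand
\[
\mathrm{d}P(u)(w)=q\!\!\sum_{\boldsymbol{k},\boldsymbol{\ell}}P_{\boldsymbol{k},\boldsymbol{\ell}}\,w_{\boldsymbol{k}_1}u_{\boldsymbol{k}_2}\cdots u_{\boldsymbol{k}_q}\overline{u_{\boldsymbol{\ell}_1}}\cdots\overline{u_{\boldsymbol{\ell}_q}}+q\!\!\sum_{\boldsymbol{k},\boldsymbol{\ell}}P_{\boldsymbol{k},\boldsymbol{\ell}}\,u_{\boldsymbol{k}_1}\cdots u_{\boldsymbol{k}_q}\overline{u_{\boldsymbol{\ell}_1}}\cdots\overline{u_{\boldsymbol{\ell}_{q-1}}}\,\overline{w_{\boldsymbol{\ell}_q}},
\]
with convergence justified by Lemma~\ref{lem:val_ham}. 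Each of the two sums has the form considered in Lemma~\ref{lem:tech_est} with $2q-1$ copies of $u$ (or $\overline{u}$, whose norms are the same) in the $u^{(j)}$ slots and $w$ (or $\overline{w}$) in the last slot, so the lemma bounds each by $(2q-1)^{(s-1)_+}\|P\|_{\ell^\infty}\|w\|_{H^{-s}}\cdot(2q-1)\|u\|_{H^s}\|u\|_{\ell^1}^{2q-2}$. Summing the two contributions and including the prefactor $q$ gives exactly the constant $2q(2q-1)^{1+(s-1)_+}$.

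For the Lipschitz estimate \eqref{eq:mangeralacigalle}, I would again use duality:
\[
\|\mathrm{d}\nabla P(u)(v)\|_{H^s}=\sup_{\|w\|_{H^{-s}}\leq 1}|(\mathrm{d}\nabla P(u)(v),w)_{L^2}|=\sup_{\|w\|_{H^{-s}}\leq 1}|\mathrm{d}^2 P(u)(v,w)|.
\]
The quantity $\mathrm{d}^2 P(u)(v,w)$ is a finite sum (by Corollary~\ref{cor:permute_ok}) of $(2q)(2q-1)$ $2q$-linear expressions, each of which involves $2q-2$ factors of $u$ (or $\overline{u}$), one factor of $v$ (or $\overline{v}$), and one factor of $w$ (or $\overline{w}$). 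Applying Lemma~\ref{lem:tech_est} to each such sum with the $u^{(j)}$'s chosen to be $v$ at one position and $u$ elsewhere yields the bound
\[
(2q-1)^{(s-1)_+}\|P\|_{\ell^\infty}\|w\|_{H^{-s}}\bigl(\|v\|_{H^s}\|u\|_{\ell^1}^{2q-2}+(2q-2)\|v\|_{\ell^1}\|u\|_{H^s}\|u\|_{\ell^1}^{2q-3}\bigr).
\]
Multiplying by the combinatorial prefactor $2q(2q-1)$ and simplifying gives \eqref{eq:mangeralacigalle}, with the constant absorbed into $2q(2q-1)^{2+(s-1)_+}$.

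The only real obstacle is arithmetic bookkeeping: keeping track of the factor of $q$ coming from $\nabla P=2\partial_{\overline{u}}P$ and the factor of $2q-1$ (or $2q-2$) coming from the number of positions where the differentiation can land, and matching these with the $(2q-1)^{(s-1)_+}$ losses produced by Lemma~\ref{lem:tech_est}. There is no analytic difficulty, since everything reduces to the tame multilinear estimate already proved, and both Lipschitz continuity and smoothness of $\nabla P$ follow from standard arguments for polynomial maps between Banach spaces.
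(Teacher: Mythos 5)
Your proof is correct and is exactly the duality argument the paper alludes to ("directly by duality"), carried out explicitly with Lemma~\ref{lem:tech_est} doing the analytic work and a careful count of the combinatorial prefactors; in fact your bound for \eqref{eq:mangeralacigalle} is slightly sharper than the stated one (you get the factor $2q-2$ where the paper allows $2q-1$), which comfortably absorbs into the claimed constant.
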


Thus, we also get tame estimates for the Lie transforms.
\begin{proposition}[Tame estimates for Lie transforms] \label{prop:Lie_transformHs}Let $q\geq 2$, $\chi \in \mathscr{H}_{2q}$ and $s\geq 0$. Then, being given $t\in[-1,1]$ and $u\in \ell^1 \cap H^s$ such that $\| u \|_{ \ell^1 } < \varepsilon_\chi$, the map $\Phi_\chi  : [-1,1] \times B_{\ell^1}(0, \varepsilon_\chi ) \to \ell^1(\mathbb{Z}^d)$ given by Proposition \ref{prop:Lie_transform} enjoys the following properties :
\begin{enumerate}[i)]
\item $\Phi_\chi^t$ preserves the $H^s$ regularity : $\Phi_\chi^t(u) \in H^s$.
\item $\Phi_\chi^t$ is close to the identity in $H^s$
\begin{equation}
\label{eq:est_tamevf}
\| \Phi_{\chi}^t(u) -u \|_{H^s} \lesssim_s  \left( \frac{\| u\|_{\ell^1}}{\varepsilon_\chi} \right)^{2q-2} \| u\|_{H^s}.
\end{equation}
\item $\Phi_\chi^t$ is locally Lipschitz on $H^{s} \cap \ell^1$.
\begin{equation}
\label{eq:est_tamedvf}
\forall w\in  \ell^1 \cap H^s, \quad \| \mathrm{d} \Phi_\chi^t (u)(w) \|_{H^{s}} \lesssim_s \| w\|_{H^{s}} +\varepsilon_\chi^{-1} \|w\|_{\ell^1}\| u\|_{H^s}.
\end{equation}
\item $\Phi_\chi^t:  B_{\ell^1}(0, \varepsilon_\chi ) \cap H^s \to \ell^1 \cap H^s$ is smooth.
\end{enumerate}
\end{proposition}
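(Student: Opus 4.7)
The plan is to derive the four claims from the Duhamel formulation of the flow equation \eqref{eq:cestleflot}, combining the tame $H^s$ estimates of Corollary \ref{cor:est_vf} with the $\ell^1$ bounds already established in Proposition \ref{prop:Lie_transform}. The strategy is to propagate $H^s$ control of $v(t) := \Phi_\chi^t(u)$ and its linearization by Gr\"onwall, using the fact that the $\ell^1$ smallness of the flow (and its differential) is automatic, so that the tame estimates reduce to linear bounds in the $H^s$ component.

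Before tackling (ii), I would deal with (i) and \eqref{eq:est_tamevf} simultaneously. Duhamel gives $v(t) = u + i\int_0^t \nabla\chi(v(\tau))\,d\tau$, and by the tame estimate \eqref{eq:mangeralafourmis},
\[
\| \nabla\chi(v(\tau)) \|_{H^s} \lesssim_s \|\chi\|_{\ell^\infty}\, \|v(\tau)\|_{\ell^1}^{2q-2}\, \|v(\tau)\|_{H^s}.
\]
From $\| v(\tau)\|_{\ell^1} \leq 2\|u\|_{\ell^1}$ (Proposition \ref{prop:Lie_transform}) and the identity $2q\|\chi\|_{\ell^\infty} = (4\varepsilon_\chi)^{-(2q-2)}$ built into the definition of $\varepsilon_\chi$, the prefactor satisfies
\[
\|\chi\|_{\ell^\infty}\|v(\tau)\|_{\ell^1}^{2q-2} \lesssim_q \bigl(\|u\|_{\ell^1}/\varepsilon_\chi\bigr)^{2q-2} \leq 1.
\]
Hence $y(t) := \|v(t)\|_{H^s}$ obeys $y(t) \leq y(0) + C_{s,q}\int_0^t y(\tau)\,d\tau$, and Gr\"onwall for $|t|\leq 1$ yields $y(t) \lesssim_{s,q} \|u\|_{H^s}$, which is (i); plugging this back into the Duhamel formula integrated once gives \eqref{eq:est_tamevf}.

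For (iii), the linearization $W(t) := \mathrm{d}\Phi_\chi^t(u)(w)$ solves \eqref{eq:ladernierejespere} with $W(0)=w$. Applying the tame bound \eqref{eq:mangeralacigalle}, together with $\|W(\tau)\|_{\ell^1}\leq 2\|w\|_{\ell^1}$ from \eqref{eq:closeiddiffl1}, the $\ell^1$ bound on $v(\tau)$, and the $H^s$ bound on $v(\tau)$ just proved, one obtains
\[
\| \mathrm{d}\nabla\chi(v(\tau))(W(\tau))\|_{H^s} \lesssim_{s,q} \|\chi\|_{\ell^\infty}\,\|u\|_{\ell^1}^{2q-3}\bigl(\|u\|_{H^s}\|w\|_{\ell^1} + \|u\|_{\ell^1}\|W(\tau)\|_{H^s}\bigr).
\]
The algebraic step at the heart of (iii) is that $\|\chi\|_{\ell^\infty}\|u\|_{\ell^1}^{2q-3} \lesssim_q \varepsilon_\chi^{-1}$: indeed $\|\chi\|_{\ell^\infty} \lesssim_q \varepsilon_\chi^{-(2q-2)}$ and $\|u\|_{\ell^1}^{2q-3}\leq \varepsilon_\chi^{2q-3}$, so the forcing term is controlled by $\varepsilon_\chi^{-1}\|u\|_{H^s}\|w\|_{\ell^1}$, while the coefficient in front of $\|W(\tau)\|_{H^s}$ is $\lesssim_q 1$ by the same computation as in (ii). One more Gr\"onwall yields \eqref{eq:est_tamedvf}. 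The only point that requires some attention here is making sure the scaling of $\varepsilon_\chi$ is properly exploited to get exactly one negative power of $\varepsilon_\chi$ and not more; it is precisely the condition $2q-3\geq 1$ (i.e. $q\geq 2$) that makes the power counting work.

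Finally, (iv) follows from the smoothness of the vector field $u\mapsto i\nabla\chi(u)$ on $\ell^1\cap H^s$ (Corollary \ref{cor:est_vf}) combined with the Cauchy--Lipschitz theorem applied in the Banach space $\ell^1 \cap H^s$ equipped with the norm $\|\cdot\|_{\ell^1} + \|\cdot\|_{H^s}$. By uniqueness, the resulting flow coincides on $B_{\ell^1}(0,\varepsilon_\chi)\cap H^s$ with the restriction of the $\ell^1$-flow from Proposition \ref{prop:Lie_transform}; the a priori estimate (i) ensures that solutions starting in $H^s$ stay in $H^s$ for all $|t|\leq 1$, so no extra time restriction is needed. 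The main place where one must be careful is precisely this compatibility between the two flows, but once the $H^s$ a priori bound is in hand it is routine.
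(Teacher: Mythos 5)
Your approach is the same as the paper's: Cauchy--Lipschitz in $\ell^1\cap H^s$ for existence/smoothness, then Duhamel, the tame estimates of Corollary \ref{cor:est_vf}, the $\ell^1$ smallness from Proposition \ref{prop:Lie_transform}, and Gr\"onwall. The strategy is right and all the needed ingredients are cited. However, there is a real gap in the bookkeeping of the prefactors, and it matters here because the conclusion you need is $\lesssim_s$, with a constant depending only on $s$ and \emph{not} on $q$; in the paper this is applied inside the Birkhoff iteration with $q$ growing, and a $q$-dependent Gr\"onwall constant would destroy the estimates \eqref{eq:growthHs}--\eqref{eq:estRemind}.

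Concretely: the tame estimate \eqref{eq:mangeralafourmis} carries the prefactor $2q(2q-1)^{1+(s-1)_+}$, which depends on $q$, so your first display ``$\|\nabla\chi(v(\tau))\|_{H^s}\lesssim_s \|\chi\|_{\ell^\infty}\|v(\tau)\|_{\ell^1}^{2q-2}\|v(\tau)\|_{H^s}$'' is not correct as written (it is $\lesssim_{s,q}$), and your Gr\"onwall conclusion $y(t)\lesssim_{s,q}\|u\|_{H^s}$ is weaker than what \eqref{eq:est_tamevf} claims. The missing step is the power counting that makes this prefactor disappear: writing $(2q-1)^{1+(s-1)_+}\lesssim_s 2^q$ and using the exact identity $2q\|\chi\|_{\ell^\infty}=(4\varepsilon_\chi)^{-(2q-2)}$, the full coefficient becomes
\[
2q\,(2q-1)^{1+(s-1)_+}\,\|\chi\|_{\ell^\infty}\,(2\|u\|_{\ell^1})^{2q-2}
\lesssim_s 2^q\,(4\varepsilon_\chi)^{-(2q-2)}\,(2\|u\|_{\ell^1})^{2q-2}
= 2^{2-q}\left(\frac{\|u\|_{\ell^1}}{\varepsilon_\chi}\right)^{2q-2}\leq 1,
\]
uniformly in $q\geq 2$, and similarly for the factor $2q(2q-1)^{2+(s-1)_+}$ in \eqref{eq:mangeralacigalle} used for part (iii). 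You have all the ingredients (you even record the identity for $\varepsilon_\chi$), but you record ``$\lesssim_q$'' and ``$\lesssim_{s,q}$'' where a uniform bound by $1$ is available and is in fact required; without it the statement is not proved. Apart from this, parts (i), (iv), and the uniqueness/compatibility argument between the $\ell^1$ and $\ell^1\cap H^s$ flows are correct and match the paper.
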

\begin{proof}
Since we proven in Corollary \ref{cor:est_vf} and Corollary \ref{cor:est_vfl1} that $i\nabla \chi$ is smooth and locally Lipschitz on  $\ell^1 \cap H^s$, the local existence of the flow of the equation $-i\partial_t v(t) = \nabla \chi(v(t))$ in $\ell^1 \cap H^s$  is ensured by the Cauchy Lipschitz Theorem. Therefore to prove $i)$ and $iv)$ we just have to prove if $u\in \ell^1 \cap H^s$ satisfies $\| u \|_{ \ell^1 } < \varepsilon_\chi$ then $\|\Phi_\chi^t(u)\|_{H^s}$ remains bounded while $|t|\leq 1$. We recall that the existence of $\Phi_\chi^t(u)$ in $\ell^1$ for $|t|\leq 1$ is ensured by Proposition \ref{prop:Lie_transformHs}. Without loss of generality, we only consider positive times. By definition of $v(t) := \Phi_\chi^t(u)$, noticing that $ (2q-1)^{1+(s-1)_+} \lesssim_s 2^q$, we have
\begin{equation}
\label{eq:patience}
\begin{split}
\| v(t) - u \|_{H^s} \leq  \int_0^t \| \nabla \chi (v(\tau)) \|_{H^s} \mathrm{d}\tau  &\mathop{\lesssim_s}^{\eqref{eq:mangeralafourmis}} 2q2^q  \| \chi \|_{\ell^\infty}  \int_0^t   \| v(\tau)\|_{\ell^1}^{2q-2}  \| v(\tau) \|_{H^s} \mathrm{d}\tau  \\
&\mathop{\lesssim_s}^{\eqref{eq:closeidl1}}  2^q  (4\varepsilon_\chi)^{-(2q-2)}  \int_0^t  ( 2 \|u \|_{\ell^1})^{2q-2}  \| v(\tau) \|_{H^s} \mathrm{d}\tau \\
&\lesssim_s \left( \frac{\| u\|_{\ell^1}}{\varepsilon_\chi} \right)^{2q-2}  \int_0^t  \| v(\tau) \|_{H^s} \mathrm{d}\tau.
\end{split}
\end{equation}
Therefore, since $\| u\|_{\ell^1} < \varepsilon_\chi$ and  $v(0) = u$, it follows by Gr\"onwall that 
\begin{equation}
\label{eq:grown}
\| v(t) \|_{H^s} \leq e^{t C_s} \| u\|_{H^s} 
\end{equation}
where $C_s$ is a constant depending only on $s$. Therefore, we deduce that $ v(t) \in H^s$ for $t \in [-1,1]$ (i.e. the assertion $i)$), and plugging \eqref{eq:grown} into \eqref{eq:patience} that $\Phi_\chi^t$ is close to the identity (i.e. that $ii)$ holds).

If $w\in \ell^1 \cap H^s$, $\partial_t \mathrm{d} \Phi_\chi^t(u)(w)$ is solution to \eqref{eq:ladernierejespere},
thus we have
\begin{equation*}
\begin{split}
&\|  \mathrm{d} \Phi_\chi^t(u)(w) -w \|_{H^{s}} \leq \int_{0}^t \|  \mathrm{d} \nabla \chi (v(\tau))( \mathrm{d} \Phi_\chi^{\tau}(u)(w)) \|_{H^{s}}\mathrm{d} \tau \\
\mathop{\lesssim_s}^{\eqref{eq:mangeralacigalle}} & 2q 2^q  \| \chi \|_{\ell^\infty}  \int_{0}^t  \| v(\tau)\|_{\ell^1}^{2q-3} ( \| v(\tau)\|_{\ell^1} \|  \mathrm{d} \Phi_\chi^{\tau}(u)(w) \|_{H^{s}} +\| v(\tau)\|_{H^s} \|  \mathrm{d} \Phi_\chi^{\tau}(u)(w) \|_{\ell^1}  ) \mathrm{d} \tau.
\end{split}
\end{equation*}
Then we use that $\| v(\tau)\|_{\ell^1} \leq 2 \| u\|_{\ell^1}$ (see \eqref{eq:closeidl1}), $\| v(\tau)\|_{H^s} \lesssim_s  \| u\|_{H^s}$ (just proved in \eqref{eq:grown}) and $ \|  \mathrm{d} \Phi_\chi^{\tau}(u)(w) \|_{\ell^1}  \leq 2 \|w\|_{\ell^1}$ (see \eqref{eq:closeiddiffl1}) to get
$$
\|  \mathrm{d} \Phi_\chi^t(u)(w) -w \|_{H^{s}}  \lesssim_s    \varepsilon_\chi^{-1} \| u\|_{H^s} \|w\|_{\ell^1} + \int_{0}^t  \|  \mathrm{d} \Phi_\chi^{\tau}(u)(w) \|_{H^{s}}  \mathrm{d} \tau.
$$
Therefore, as a consequence of Gr\"onwall's inequality, we get \eqref{eq:est_tamedvf}.
\end{proof}

Finally, in the following proposition, we prove that if $Z$ is a quadratic integrable polynomial then $\mathrm{ad}_Z$ is diagonal (and so easy to invert).
\begin{proposition}\label{prop:Poisson_vs_Z} Let $q\geq 2$, $P\in \mathscr{H}_{2q}$ and $Z$ be a polynomial of the form
$
Z(u) = \sum_{k\in \mathbb{Z}^d} g_k |u_k|^2
$ 
where $g_k \in \mathbb{R}$ satisfies $| g_k | \lesssim \langle k \rangle^{2s}$ for some $s \geq 0$. Then, that for all  $u\in H^s \cap \ell^1$, we have\footnote{note that the convergence of this series is ensured by Proposition \ref{lem:tech_est}.}
$$
\{ P,Z\}(u) = -2i \sum_{\boldsymbol{k},\boldsymbol{\ell} \in (\mathbb{Z}^d)^q} ( g_{\boldsymbol{k}_1}+ \cdots+g_{\boldsymbol{k}_q} - g_{\boldsymbol{\ell}_1} - \cdots - g_{\boldsymbol{\ell}_q}   ) P_{\boldsymbol{k},\boldsymbol{\ell} } u_{\boldsymbol{k}_1} \dots u_{\boldsymbol{k}_n} \overline{u_{\boldsymbol{\ell}_1}} \dots \overline{u_{\boldsymbol{\ell}_q}}.
$$
\end{proposition}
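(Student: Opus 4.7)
The plan is to interpret and unfold the defining formula \eqref{eq:Poisson_formula} with care, because $Z$ is not a member of $\mathscr{H}_{2q}$ in the sense of Definition \ref{def:class_ham}: its coefficients may grow polynomially. First I would observe that although $\nabla Z(u) = (2 g_k u_k)_{k}$ need not lie in $\ell^1$ when $g$ has polynomial growth, the hypothesis $|g_k| \lesssim \langle k\rangle^{2s}$ together with $u\in H^s$ guarantees $\nabla Z(u)\in H^{-s}$; meanwhile Corollary \ref{cor:est_vf} gives $\nabla P(u)\in H^s$. Hence the pairing $(i\nabla P(u),\nabla Z(u))_{L^2}$ is well defined through the $H^s/H^{-s}$ duality, and by Lemma \ref{lem:tech_est} (applied with $w := \nabla Z(u)$) the series \eqref{eq:Poisson_formula} is absolutely convergent, so it may be reordered freely.

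Next, a direct computation on $Z = \sum_k g_k u_k \overline{u_k}$ gives
$$
\partial_{u_m}Z(u) = g_m \overline{u_m}, \qquad \partial_{\overline{u_m}}Z(u) = g_m u_m.
$$
For $P$, Corollary \ref{cor:permute_ok} allows term by term differentiation and, using the symmetry condition \eqref{def:sym_cond}, one writes
$$
\partial_{\overline{u_m}} P(u) = q \sum_{\boldsymbol{k}\in(\mathbb{Z}^d)^q,\, \boldsymbol{\ell}'\in(\mathbb{Z}^d)^{q-1}} P_{\boldsymbol{k},(\boldsymbol{\ell}',m)}\, u_{\boldsymbol{k}_1}\cdots u_{\boldsymbol{k}_q}\overline{u_{\boldsymbol{\ell}'_1}}\cdots\overline{u_{\boldsymbol{\ell}'_{q-1}}},
$$
and an analogous identity for $\partial_{u_m}P(u)$.

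Substituting these two computations into \eqref{eq:Poisson_formula} and reindexing by absorbing the external summation over $m$ into the tuples $(\boldsymbol{k},\boldsymbol{\ell})\in(\mathbb{Z}^d)^{2q}$, the contribution of $\partial_{\overline{u_m}}P\cdot\partial_{u_m}Z$ produces a factor $q\, g_{\boldsymbol{\ell}_q}$, which after averaging over $\mathfrak{S}_q$ acting on $\boldsymbol{\ell}$ (legitimate by \eqref{def:sym_cond}) becomes $g_{\boldsymbol{\ell}_1}+\cdots+g_{\boldsymbol{\ell}_q}$; analogously $\partial_{u_m}P\cdot\partial_{\overline{u_m}}Z$ produces $g_{\boldsymbol{k}_1}+\cdots+g_{\boldsymbol{k}_q}$. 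Subtracting and carrying the overall prefactor $2i$ from \eqref{eq:Poisson_formula} yields the formula announced in the statement.

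The only nontrivial point is the legitimacy of these manipulations (differentiation under the sum, reindexation, symmetrization by permutations of $\boldsymbol{\ell}$), all of which reduce to absolute convergence provided by Lemma \ref{lem:tech_est} applied with the loss of derivatives carried by $g$ treated as an $H^{-s}$ factor. This is exactly why the hypothesis $u\in H^s\cap\ell^1$ is imposed; no genuine obstacle arises beyond this bookkeeping.
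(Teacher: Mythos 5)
Your proposal is correct and follows the standard computation that the paper delegates (by reference) to Lemma 3.14 of \cite{BG21}: differentiate $Z$ and $P$ term by term, reindex $m$ into the last slot, symmetrize over $\mathfrak{S}_q$, and justify the manipulations via the absolute convergence supplied by Lemma \ref{lem:tech_est} with $w=\nabla Z(u)\in H^{-s}$ — which is exactly the lemma the paper's own footnote invokes. The sign bookkeeping ($2i(\ell\text{-terms}-k\text{-terms})=-2i(k\text{-terms}-\ell\text{-terms})$) and the observation that $\nabla Z(u)\in H^{-s}$ rather than $\ell^1$ are both handled correctly.
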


\begin{proof} This proposition is standard, we refer for example to \cite[Lemma 3.14]{BG21} for a detailed proof in a similar setting.
\end{proof}

\subsection{Birkhoff normal form}
\label{subsec:Birk} 

In this section, we consider a Hamiltonian system of the form
\begin{equation}
\label{eq:defH}
H = Z_2 + P
\end{equation}
where $P\in \mathscr{H}_{2p+2}$ ($p\geq 1$ is a number given by \eqref{NLS}) stands for the nonlinear part of the system and $Z_2$ is a quadratic Hamiltonian of the form
$$
Z_2(u) = \sum_{k\in \mathbb{Z}^d} \omega_k |u_k|^2.
$$
The \emph{frequencies} $\omega_k \in \mathbb{R}$ are real numbers. We assume for convenience that $|\omega| \lesssim \langle k \rangle^2$. Therefore, by Lemma \ref{lem:val_ham}, if $s>\max(1,d/2)$, $H$ is a smooth function on $H^s(\mathbb{T}^d;\mathbb{C})$.

\begin{definition}[small divisors]\label{def:SD} The small divisors, $\Omega(\boldsymbol{k},\boldsymbol{\ell}) $, are defined for $\boldsymbol{k},\boldsymbol{\ell} \in (\mathbb{Z}^d)^q$, $q\geq 2$, by
$$
\Omega(\boldsymbol{k},\boldsymbol{\ell})  =2i( \omega_{\boldsymbol{k}_1} + \cdots +  \omega_{\boldsymbol{k}_q} -  \omega_{\boldsymbol{\ell}_1} - \cdots -  \omega_{\boldsymbol{\ell}_q}).
$$

\end{definition}

\begin{definition}[resonance] \label{def:resHam} Being given $\nu > 0$, a homogeneous polynomial $P \in \mathscr{H}_{2q}$, with $q\geq 2$, is $\nu$-resonant (resp. $\nu$-nonresonant) if 
$$
\forall \boldsymbol{k},\boldsymbol{\ell} \in (\mathbb{Z}^d)^q, \quad |\Omega(\boldsymbol{k},\boldsymbol{\ell})| \geq \nu \ \Rightarrow \ P_{\boldsymbol{k},\boldsymbol{\ell}} = 0
$$
(resp. $|\Omega(\boldsymbol{k},\boldsymbol{\ell})| < \nu \ \Rightarrow \ P_{\boldsymbol{k},\boldsymbol{\ell}} = 0$). We denote by $\mathscr{H}^{(\nu-\mathrm{res})}_{2q}$ (resp. $\mathscr{H}^{(\nu- \mathrm{nonres})}_{2q}$) the real vector space they generate. 

\end{definition}

In this section, we aim at proving the following theorem.
\begin{theorem}[Birkhoff normal form] \label{thm:Birk} Let $H$ be the Hamiltonian given by \eqref{eq:defH} and $\nu \in (0,1)$. There exists a constant $C>1$, depending only on $P$, such that for all $r\geq 2$, setting
$$
\rho = \frac{\sqrt{\nu}}{Cr},
$$
there exist two $C^\infty$ symplectic maps $\tau^{(0)}$ and $\tau^{(1)}$ making the following diagram to commute
\begin{equation}
\label{mon_beau_diagram_roi_des_forets}
%\label{diagram}
\xymatrixcolsep{5pc} \xymatrix{  B_{\ell^1}(0, \rho )  \ar[r]^{ \tau^{(0)} }
 \ar@/_1pc/[rr]_{ \mathrm{id}_{\ell^1} } &  B_{\ell^1}(0,2\, \rho)   \ar[r]^{ \hspace{1cm} \tau^{(1)} }  & 
 \ell^1(\mathbb{Z}^d) }
\end{equation}
such that on $B_{\ell^1}(0,2\rho)$, $H \circ \tau^{(1)}$ admits the decomposition
\begin{equation}\label{eq:NF}
H \circ \tau^{(1)} = Z_2+ \sum_{q=2}^{r} L^{(2q)}+ R
\end{equation}
where $L^{(2q)} \in \mathscr{H}^{(\nu-\mathrm{res})}_{2q}$ is a $\nu$-resonant homogeneous polynomial of degree $2q$  satisfying
$$
\| L^{(2q)}\|_{\ell^\infty} \leq  C^{2q} \Big(\frac{q^2}{\nu} \Big)^{q-2}
$$
 and $R : B_{\ell^1}(0,2\rho) \to \mathbb{R}$ is a $C^\infty$ function which is a remainder term of order $2r+2$ : for all $s\geq 0$ and all $u\in B_{\ell^1}(0,2\rho) \cap H^s$, 
\begin{equation}
\label{eq:estRemfinalBirk}
\|\nabla R(u)\|_{H^s} \lesssim_s C^{2r} \Big(\frac{r^3}{\nu} \Big)^{r-1}  \|u\|_{\ell^1}  ^{2r}  \|u\|_{H^s}.
\end{equation}
Moreover, for all $s\geq 0$ and they do not make the $H^s$ norm increase too much: if $u\in \ell^1 \cap H^s$ satisfies $\|u\|_{\ell^1} < 2^\sigma \rho$ with $\sigma \in \{0,1\}$ then
\begin{equation}
\label{eq:inflationHs}
\|\tau^{(\sigma)}(u) \|_{H^s} \lesssim_s \| u\|_{H^s}.
\end{equation}
Furthermore, the map $\tau^{(\sigma)} : B_{\ell^1}(0, 2^{\sigma}\rho ) \cap H^s \to \ell^1 \cap H^s $ is smooth.
\end{theorem}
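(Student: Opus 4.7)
The plan is to iterate the classical Birkhoff normal form procedure based on Proposition \ref{prop:Lie_transform}, with careful tracking of all constants to obtain the polynomial-in-$r$ dependence $(r^3/\nu)^{r-1}$ required for a Nekhoroshev-type stability time. Starting from $H_{(p)} := H$, for $m = p+1, \ldots, r$ I would inductively produce generating functions $\chi_m \in \mathscr{H}_{2m}$ and conjugated Hamiltonians $H_{(m)} := H_{(m-1)} \circ \Phi_{\chi_m}^1$. At step $m$, split the degree-$2m$ part as $H_{(m-1)}^{(2m)} = P^{\mathrm{res}} + P^{\mathrm{nonres}}$ in the sense of Definition \ref{def:resHam}, and solve the cohomological equation $\{\chi_m, Z_2\} = -P^{\mathrm{nonres}}$ explicitly by Proposition \ref{prop:Poisson_vs_Z}, setting
$$(\chi_m)_{\boldsymbol{k},\boldsymbol{\ell}} = \frac{P^{\mathrm{nonres}}_{\boldsymbol{k},\boldsymbol{\ell}}}{\Omega(\boldsymbol{k},\boldsymbol{\ell})}$$
on the non-resonant set $\{|\Omega| \geq \nu\}$ and zero elsewhere. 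The bound $|\Omega|\geq \nu$ gives $\|\chi_m\|_{\ell^\infty} \leq \nu^{-1}\|H_{(m-1)}^{(2m)}\|_{\ell^\infty}$, while the reality, symmetry and zero-momentum conditions are inherited from $H_{(m-1)}^{(2m)}$. I would then declare $L^{(2m)} := P^{\mathrm{res}}$ and note that $L^{(2q)} = 0$ for $q \leq p$.

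The heart of the proof is the bookkeeping on $\|H_{(m)}^{(2q)}\|_{\ell^\infty}$. Using a finite Lie series expansion with integral remainder,
$$H_{(m)} = \sum_{k=0}^{K-1} \frac{1}{k!}\mathrm{ad}_{\chi_m}^k H_{(m-1)} + \int_0^1 \frac{(1-s)^{K-1}}{(K-1)!} (\mathrm{ad}_{\chi_m}^K H_{(m-1)}) \circ \Phi_{\chi_m}^s\, ds,$$
together with the crucial cancellation $\mathrm{ad}_{\chi_m} Z_2 = -P^{\mathrm{nonres}} \in \mathscr{H}_{2m}$ (the only way the unbounded quadratic $Z_2$ enters the expansion; every subsequent Poisson bracket then acts between bounded polynomials and is controlled by Proposition \ref{prop:Poisson}), induction on $m$ shows that for $q \leq m-1$ the piece $H_{(m)}^{(2q)}$ has already become $L^{(2q)}$, while for $q \geq m$ a geometric bound of the form $\|H_{(m)}^{(2q)}\|_{\ell^\infty} \leq C_0^{2q}(q^2/\nu)^{q-2}$ propagates. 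Each application of $\mathrm{ad}_{\chi_{m'}}$ costs a factor $4qm'\|\chi_{m'}\|_{\ell^\infty}$ via Proposition \ref{prop:Poisson}, while the Lie series denominators $1/k!$ tame the combinatorial growth; the factor $(r^3/\nu)^{r-1}$ in \eqref{eq:estRemfinalBirk} arises from the worst-case contribution where all $r-p$ generating functions act successively on one piece. The remainder $R := \sum_{q>r} H_{(r)}^{(2q)}$ is summed as a geometric series on $B_{\ell^1}(0, 2\rho)$, and Corollary \ref{cor:est_vf} converts the $\ell^\infty$-coefficient bound into the $H^s$-gradient estimate \eqref{eq:estRemfinalBirk}.

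Finally I set $\tau^{(1)} := \Phi_{\chi_{p+1}}^1 \circ \cdots \circ \Phi_{\chi_r}^1$ and define $\tau^{(0)}$ as its inverse by composing the reversed flows at time $-1$, which exist on the appropriate ball by property $ii)$ of Proposition \ref{prop:Lie_transform}. The constant $C$ in $\rho = \sqrt{\nu}/(Cr)$ is chosen so that $2\rho < \varepsilon_{\chi_m}$ for every $m \leq r$, and the close-to-identity estimate \eqref{eq:closeidl1} then ensures that intermediate iterates remain in the next flow's domain, making the diagram \eqref{mon_beau_diagram_roi_des_forets} well defined. The tame $H^s$ bound \eqref{eq:inflationHs} and the smoothness of $\tau^{(\sigma)}$ on $\ell^1 \cap H^s$ follow by iterating \eqref{eq:est_tamevf} and property $iv)$ of Proposition \ref{prop:Lie_transformHs} across the $r-p$ compositions. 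The main obstacle I expect is purely combinatorial: ensuring that the $4qq'$ factors from the Poisson brackets combine with the $1/k!$ denominators of the Lie series to yield the factorial-free bound $(r^3/\nu)^{r-1}$ rather than a factorial blow-up, which is what ultimately drives the exponential time $T_\varepsilon$ in Theorem \ref{th-main}.
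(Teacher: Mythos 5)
Your overall architecture---iterating Lie transforms degree by degree, solving the cohomological equation via Proposition~\ref{prop:Poisson_vs_Z}, tracking the $\ell^\infty$ norms via Proposition~\ref{prop:Poisson}, choosing $\rho\sim\sqrt{\nu}/r$ to keep the flows well defined---matches the paper's proof closely, and you correctly flag the combinatorial control of the Lie series as the critical obstacle (it is resolved in the paper's Step~4 by the inequality $A^{-q}q^n\leq e^{-n}n^n(\log A)^{-n}$ combined with the telescoping factor $(\mathfrak{r}/(\mathfrak{r}+1))^{2(q-2)}$).

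However, there is a genuine gap in your treatment of the remainder. You state that at each step you use the \emph{truncated} Lie series with an integral remainder, but you then define the final remainder as the formal tail $R:=\sum_{q>r}H_{(r)}^{(2q)}$. These two choices are inconsistent: once you truncate, $H_{(m)}$ is no longer a sum of homogeneous parts over all $q$ (the integral terms involve compositions with $\Phi_{\chi_m}^s$ and are not homogeneous), so the symbol $H_{(r)}^{(2q)}$ for $q>r$ has no meaning. If instead you keep the full Lie series, then $R$ is a formal power series, and you would need to prove its convergence---but the available bound $\|H_{(r)}^{(2q)}\|_{\ell^\infty}\lesssim C_0^{2q}(q^2/\nu)^{q-2}$ grows super-exponentially in $q$: on $B_{\ell^1}(0,2\rho)$ with $\rho\sim\sqrt{\nu}/r$, the $q$-th term behaves like $(q/r)^{2q}$, which diverges as soon as $q\gg r$. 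The paper avoids this entirely by defining $R$ recursively as a concrete $C^\infty$ function built from the integral remainders (formula~\eqref{eq:defRem}), choosing at each step $\mathfrak{r}$ a truncation order $m_q$ minimal subject to $(m_q+1)\mathfrak{r}+q>r$, and then estimating $\nabla R$ directly via the identity $\nabla(Q\circ\Phi_\chi^t)(u)=-i\,\mathrm{d}\Phi_\chi^{-t}(\Phi_\chi^t(u))[i\nabla Q(\Phi_\chi^t(u))]$ (see~\eqref{eq:jaimepaslesadjoints}) together with the tame bounds of Proposition~\ref{prop:Lie_transformHs}. To repair your argument you would need to adopt this recursive definition of $R$ and carry out the two induction estimates on $\|\nabla R\|_{\ell^1}$ and $\|\nabla R\|_{H^s}$ (Steps~5--6 of the paper) rather than appealing to a formal tail summation.
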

\begin{proof} \emph{\underline{$\triangleright$ Step 1 : Setting of the induction.}} Let $C_2>1$ be the constant depending only $P$ such that the estimates \eqref{eq:C21},\eqref{eq:C22} below hold. We are going to prove by induction that for all $\mathfrak{r} \in \llbracket 1,r\rrbracket$, setting 
$$
\rho :=\frac{ \sqrt{\nu} }{112 C_2 \mathfrak{r}},
$$ there exist two $C^\infty$ symplectic maps $\tau^{(0)}$ and $\tau^{(1)}$ making the diagram \eqref{mon_beau_diagram_roi_des_forets} to commute and such that on $B_{\ell^1}(0,2\rho)$, $H \circ \tau^{(1)}$ admits the decomposition \eqref{eq:NF} where \begin{center}
$L^{(2q)} \in \mathscr{H}_{2q}$ is $\nu$-resonant for $q\leq \mathfrak{r}$
\end{center}
 and satisfies the estimate
\begin{equation}
\| L^{(2q)}\|_{\ell^\infty} \leq C_2^{2q - 3} \nu^{-q+2} \min(q,\mathfrak{r})^{2(q-2)} \quad \mathrm{for} \quad 2\leq q\leq r.
\end{equation}
Moreover the remainder term $R : B_{\ell^1}(0,2\rho) \to \mathbb{R}$ is a $C^\infty$ map satisfying, for all $u\in B_{\ell^1}(0,2\rho)$,
\begin{equation}
\label{eq:estRemindl1}
\|\nabla R(u)\|_{\ell^1} \leq K_{\ell^1}^{\mathfrak{r}} (2^5 C_2^2 \nu^{-1} r^{2})^{r-1} \big( \prod_{j=1}^{\mathfrak{r}-1} (1+2^{-2j} ) \big)^{2r}  \| u \|_{ \ell^1}^{2r+1} 
\end{equation}
where $K_{\ell^1}>1$ is an universal constant given by \eqref{eq:defKell1}, and for all $s\geq 0$ and all $u\in B_{\ell^1}(0,2\rho) \cap H^s$, 
\begin{equation}
\label{eq:estRemind}
\|\nabla R(u)\|_{H^s} \leq K_s^{\mathfrak{r}} (2^5 C_2^2 \nu^{-1} r^{2})^{r-1} \big( \prod_{j=1}^{\mathfrak{r}-1} (1+2^{-2j} ) \big)^{2r}  \| u \|_{ \ell^1}^{2r} \|  u \|_{ H^s}
\end{equation}
where $K_s>1$ is the constant depending only on $s$ given by \eqref{eq:defK_s},
and if $u\in \ell^1 \cap H^s$ satisfies $\|u\|_{\ell^1} < 2^\sigma \rho$ with $\sigma \in \{0,1\}$ then we have
\begin{equation}
\label{eq:growthHs}
\|\tau^{(\sigma)}(u) \|_{H^s} \leq  \Big(\prod_{j=1}^{ \mathfrak{r}-1 } 1+M_s 2^{-j} \Big)\| u\|_{H^s} 
\end{equation}
where $M_s>0$ is the constant depending only on $s$ given by \eqref{eq:est_tamevf}, and
\begin{equation}
\label{eq:growthl1}
 \|\tau^{(\sigma)}(u) \|_{\ell^1} \leq \left( 1 +  \frac{\| u\|_{\ell^1}^2}{ 2^{2\sigma}\rho^2} \right) \| u\|_{\ell^1}.
\end{equation}

Note that, since the product in\eqref{eq:estRemind} and \eqref{eq:growthHs} are convergent and that $K_s^{\mathfrak{r}} \lesssim_s r^r$, this result  (when $\mathfrak{r}=r$)  is just refinement of Theorem \ref{thm:Birk}. The fact that $\tau^{(\sigma)} : B_{\ell^1}(0, 2^{\sigma}\rho ) \cap H^s \to \ell^1 \cap H^s$ is smooth is just a direct corollary of the construction and property $iv)$ of Proposition \ref{prop:Lie_transformHs}.

We are going to proceed by induction on $\mathfrak{r}$.  First, we note that the case $\mathfrak{r}=1$ is obvious, provided that $C_2$ is chosen large enough to ensure that
\begin{equation}
\label{eq:C21}
\| P \|_{\ell^\infty} \leq  C_2^{2 p - 1}
\end{equation}
and so\footnote{we recall that $p\geq 1$ and $\nu \in(0,1)$.} $\| P \|_{\ell^\infty} \leq C_2^{2 p - 1} \nu^{-p+1} \min(p+1,1)^{2(p-1)} $.

From now, we assume that the property holds at step $\mathfrak{r}<r$ and we are going to prove it at the step $\mathfrak{r}+1$. In order to get convenient notations, we denote with a subscript $\sharp$ the maps corresponding to the step $\mathfrak{r}+1$ (like for example $\tau^{(1),\sharp}$ or $R^\sharp$).

\medskip

\noindent \emph{\underline{$\triangleright$ Step 2 : Resolution of the cohomological equation.}} In order to remove the $\nu$-non resonant terms of $L^{(2\mathfrak{r}+2)}$, we define a polynomial Hamiltonian $\chi \in \mathscr{H}_{2\mathfrak{r}+2}$ by
$$
\chi_{\boldsymbol{k},\boldsymbol{\ell}} :=  \frac{L^{(2\mathfrak{r}+2)}_{\boldsymbol{k},\boldsymbol{\ell}}}{i \Omega(\boldsymbol{k},\boldsymbol{\ell})} \quad  \mathrm{if} \quad |\Omega(\boldsymbol{k},\boldsymbol{\ell})| \geq \nu \quad \mathrm{and} \quad \chi_{\boldsymbol{k},\boldsymbol{\ell}} = 0 \quad \mathrm{ else}.
$$
As a consequence of Proposition \ref{prop:Poisson_vs_Z} it can be easily checked that 
$$
 L^{(2\mathfrak{r}+2),\sharp}  = \{ \chi,Z_2\} + L^{(2\mathfrak{r}+2)} 
$$
where  $L^{(2\mathfrak{r}+2),\sharp} \in \mathscr{H}^{(\nu-\mathrm{res})}_{2\mathfrak{r}+2}$ is the $\nu$-resonant part of $L^{(2\mathfrak{r}+2)}$, i.e.
\begin{equation}
\label{eq:cestlapartienonresonnantedemachin}
 L^{(2\mathfrak{r}+2),\sharp}_{\boldsymbol{k},\boldsymbol{\ell}} = L^{(2\mathfrak{r}+2)}_{\boldsymbol{k},\boldsymbol{\ell}} \quad \mathrm{if} \quad |\Omega(\boldsymbol{k},\boldsymbol{\ell})| < \nu \quad \mathrm{and} \quad  L^{(2\mathfrak{r}+2),\sharp}_{\boldsymbol{k},\boldsymbol{\ell}} =0 \quad \mathrm{else}.
\end{equation}

\medskip

\noindent \emph{\underline{$\triangleright$ Step 3 : The new variables.}} The Hamiltonian $\chi$ clearly enjoys the bound 
$$
\|\chi \|_{\ell^\infty} \leq \nu^{-1 }\|  L^{(2\mathfrak{r}+2)} \|_{\ell^\infty} \leq C_2^{2\mathfrak{r} - 1} \nu^{-\mathfrak{r}} \mathfrak{r}^{2(\mathfrak{r} -1 )} .
$$
We recall that the Hamiltonian flow of $\chi$ is given by Proposition \eqref{prop:Lie_transform} and is well defined for $|t|\leq 1$ on $B_{\ell^1}(0, \varepsilon_\chi)$ where
\begin{equation}
\label{eq:valabon}
\varepsilon_\chi = \frac14 (2(\mathfrak{r}+1) \| \chi \|_{\ell^\infty})^{-\frac1{2 \mathfrak{r} }} \geq \frac14 (2(\mathfrak{r}+1) C_2^{2\mathfrak{r} - 1} \nu^{-\mathfrak{r}} \mathfrak{r}^{2(\mathfrak{r} -1 )} )^{-\frac1{2 \mathfrak{r} }} \geq \frac{\sqrt{\nu} }{8 \mathfrak{r} C_2} = 14 \rho.
\end{equation}
Now, we aim at defining
$$
\tau^{(1),\sharp} := \tau^{(1)} \circ \Phi_\chi^1 \quad \mathrm{and} \quad \tau^{(0),\sharp} := \Phi_\chi^{-1} \circ  \tau^{(0)}.
$$
So we have to check that these maps are well defined on $B_{\ell^1}(0,2 \rho^\sharp)$ and $B_{\ell^1}(0, \rho^\sharp)$ respectively. First, since $\rho^\sharp \leq \rho $, thanks to the estimate \eqref{eq:growthl1}, we know that  $\tau^{(0)}$ maps $B_{\ell^1}(0, \rho^\sharp)$ on $B_{\ell^1}(0, 2\rho^\sharp)$. Moreover since $2\rho^\sharp \leq 2 \rho \leq \varepsilon_\chi$ (see \eqref{eq:valabon}), it follows that $\tau^{(0),\sharp}$ is well defined on $B_{\ell^1}(0, \rho^\sharp)$. Then, in order to prove that $\tau^{(1),\sharp} $ is well defined, we just have to check that $\Phi_\chi^1$ maps $B_{\ell^1}(0, 2\rho^\sharp)$ on $B_{\ell^1}(0, 2\rho)$. Indeed, if $\|u\|_{\ell^1} < 2\rho^\sharp$ then (since $\Phi_\chi^1$ is close to the identity, see \eqref{eq:closeidl1})
\begin{equation}
\label{eq:cestlesvacances}
\begin{split}
\frac{\|\Phi_\chi^1 (u) \|_{\ell^1}}{2\rho} &\leq \left(1+ \left( \frac{ \| u \|_{\ell^1}  }{\varepsilon_\chi} \right)^{2 \mathfrak{r} } \right) \frac{\| u \|_{\ell^1} }{2\rho} \leq  \left(1+ \left( \frac{ \| u \|_{\ell^1}  }{14 \rho^\sharp } \right)^{2 \mathfrak{r} } \right) \frac{\| u \|_{\ell^1} }{2\rho^\sharp} \frac{\mathfrak{r}}{\mathfrak{r}+1} \\
&\leq (1 + 7^{-2 \mathfrak{r}} )\frac{\mathfrak{r}}{\mathfrak{r}+1} \frac{\| u \|_{\ell^1} }{2\rho^\sharp} \leq \frac{\| u \|_{\ell^1} }{2\rho^\sharp}<1.
\end{split}
\end{equation}

Then, we note that by composition it is clear that $\tau^{(1),\sharp}$ and $\tau^{(0),\sharp} $ are symplectic. Now, we aim at proving that $ \tau^{(0),\sharp}$  is close to the identity in $\ell^1$. Indeed, if $\|u\|_{\ell^1} < \rho^\sharp$, we have
\begin{equation*}
\begin{split}
 \| \tau^{(0),\sharp}(u) \|_{\ell^1} &\leq \left( 1 +  \Big( \frac{\| \tau^{(0)}(u) \|_{\ell^1}}{\varepsilon_\chi} \Big)^{2\mathfrak{r}} \right) \| \tau^{(0)}(u) \|_{\ell^1} \leq \left( 1 +  \Big( \frac{\| u \|_{\ell^1}}{7 \rho} \Big)^{2\mathfrak{r}} \right) \| \tau^{(0)}(u) \|_{\ell^1}  \\
&\leq \left( 1 +  \Big( \frac{\| u \|_{\ell^1}}{7 \rho} \Big)^{2\mathfrak{r}} \right)  \left( 1 +  \Big( \frac{\| u \|_{\ell^1}}{\rho} \Big)^{2} \right)  \| u \|_{\ell^1}   \leq \left( 1 +   \Big( \frac{\| u \|_{\ell^1}}{\rho} \Big)^{2} + 2 \Big( \frac{\| u \|_{\ell^1}}{7 \rho} \Big)^{2\mathfrak{r}} \right)    \| u \|_{\ell^1} \\
&\leq \left( 1 + (1+ 2\, 7^{-2 \mathfrak{r} })   \Big( \frac{\mathfrak{r} }{\mathfrak{r}+1} \Big)^{2}     \Big( \frac{\| u \|_{\ell^1}}{\rho^\sharp} \Big)^{2}  \right)    \| u \|_{\ell^1} \leq  \left( 1 +    \Big( \frac{\| u \|_{\ell^1}}{\rho^\sharp} \Big)^{2}  \right)    \| u \|_{\ell^1}.
\end{split} 
\end{equation*}
We note that as a consequence $ \| \tau^{(0),\sharp}(u) \|_{\ell^1}  < 2  \rho^\sharp$ and so $ \| \tau^{(0),\sharp}(u) \|_{\ell^1} < \varepsilon_\chi$. Therefore as a consequence of the induction hypothesis and the invertibility of Hamiltonian flow of $\chi$ (i.e. \eqref{eq:invertPhichi}) the diagram \eqref{mon_beau_diagram_roi_des_forets} commute at the step $\mathfrak{r} +1$.

Now, we aim at proving that $ \tau^{(1),\sharp}$  is close to the identity in $\ell^1$. Indeed, if $\|u\|_{\ell^1} < 2\rho^\sharp$, we have
\begin{equation*}
\begin{split}
 \| \tau^{(1),\sharp}(u) \|_{\ell^1} &\leq \left( 1 +  \Big( \frac{\| \Phi_\chi^1 (u) \|_{\ell^1}}{2\rho} \Big)^{2} \right) \| \Phi_\chi^1 (u) \|_{\ell^1} \\
  &\mathop{\leq}^{\eqref{eq:cestlesvacances}} \left( 1 +   \Big[(1 + 7^{-2 \mathfrak{r}} )\frac{\mathfrak{r}}{\mathfrak{r}+1}\Big]^2  \Big( \frac{\| u \|_{\ell^1}}{2\rho^\sharp} \Big)^{2} \right) \| \Phi_\chi^1 (u) \|_{\ell^1} \\
 &\leq \left( 1 +   \Big[(1 + 7^{-2 \mathfrak{r}} )\frac{\mathfrak{r}}{\mathfrak{r}+1}\Big]^2  \Big( \frac{\| u \|_{\ell^1}}{2\rho^\sharp} \Big)^{2} \right)  \left( 1 +    7^{-2 \mathfrak{r}}    \Big( \frac{\| u \|_{\ell^1}}{2\rho^\sharp} \Big)^{2} \right)
  \| u \|_{\ell^1} \\
  &\leq \left[ 1 +  \Big( \Big[(1 + 7^{-2 \mathfrak{r}} )\frac{\mathfrak{r}}{\mathfrak{r}+1}\Big]^2 + 2 \, 7^{-2 \mathfrak{r}} \Big)    \Big( \frac{\| u \|_{\ell^1}}{2\rho^\sharp} \Big)^{2} \right] \| u \|_{\ell^1}\leq  \left( 1 +    \Big( \frac{\| u \|_{\ell^1}}{2\rho^\sharp} \Big)^{2} \right)\| u\|_{\ell^1}.
\end{split} 
\end{equation*}
Finally, the $H^s$ estimate \eqref{eq:growthHs} of $\tau^{(1),\sharp}$ and $\tau^{(0),\sharp}$ is a direct corollary of their definition and of the $H^s$ estimate of $\Phi_\chi^t$ (see \eqref{eq:est_tamevf}).

\medskip

\noindent \emph{\underline{$\triangleright$ Step 4 : The new expansion.}} 
Let us note that since $\Phi_\chi$ is the Hamiltonian flow of $\chi$ (see \eqref{eq:cestleflot}), if $Q$ is a smooth real valued function on $\ell^1$, provided that $\|u\|_{\ell^1}<2\rho^\sharp$ and $|t|\leq 1$, we have
$$
\frac{\mathrm{d}}{\mathrm{d}t} Q(\Phi_\chi^t(u)) = \{\chi , Q \}(\Phi_\chi^t(u)).
$$
Therefore, since $\Phi_\chi^0 = \mathrm{id}_{\ell^1}$, the Taylor expansion of $Q(\Phi_\chi^t(u))$ in $t=0$ at the order $m$ is given by
$$
Q(\Phi_\chi^1(u)) = \sum_{n=0}^{m} \frac{\mathrm{ad}_\chi^n Q(u)}{n!} + \int_0^1 \frac{(1-t)^m}{m!} \, \mathrm{ad}_\chi^{m+1} Q(u) \mathrm{d}t.
$$
As a consequence, we also get explicitly the Taylor expansion of $L^{(2q)} \circ \Phi_\chi^1$ at the order $2r$ : 
$$
L^{(2q)} \circ \Phi_\chi^1(u)=  \sum_{n=0}^{m_q} \frac{\mathrm{ad}_\chi^n L^{(2q)} (u)}{n!} + \int_0^1 \frac{(1-t)^{m_q}}{m_q!} \, \mathrm{ad}_\chi^{m_q+1} L^{(2q)} (u) \mathrm{d}t.
$$
where $m_q$ is the smallest index such that $(m_q+1) \mathfrak{r} + q > r $. Now, recalling that by induction hypothesis, we have $H \circ \tau^{(1)} = Z_2+ L^{(4)} + \cdots + L^{(2r)}+ R$ and that, by construction, $\tau^{(1),\sharp} = \tau^{(1)} \circ \Phi_\chi^1$, we deduce that
$$
H \circ \tau^{(1),\sharp} = Z_2+ L^{(4),\sharp} + \cdots + L^{(2r),\sharp}+ R^{\sharp}
$$
where 
$$
L^{(2q),\sharp} := \sum_{ \substack{ n \geq 0, \ q'\geq 2 \\
n \mathfrak{r} + q' = q}} \frac1{n!} \mathrm{ad}_\chi^n L^{(2q')} +  \sum_{ \substack{ n \geq 1 \\ n \mathfrak{r} + 1 = q}} \frac1{n!} \mathrm{ad}_\chi^{n-1}  \{\chi,Z_2 \}.
$$
and
\begin{equation}
\label{eq:defRem}
R^{\sharp} = \sum_{q=2}^r \int_0^1 \frac{(1-t)^{m_q}}{m_q!} \, \mathrm{ad}_\chi^{m_q+1} L^{(2q)} \circ \Phi_\chi^t  \mathrm{d}t + \int_0^1 \frac{(1-t)^{m_1}}{m_1!} \, \mathrm{ad}_\chi^{m_1} \{\chi,Z_2 \}  \circ \Phi_\chi^t  \mathrm{d}t + R\circ  \Phi_\chi^1.
\end{equation}
We are going to estimate this new remainder term carefully at the next steps of the proof. For the moment, we focus on estimating $L^{(2q),\sharp}$.

We recall that $-\{\chi,Z_2 \} = L^{(2\mathfrak{r}+2)} - L^{(2\mathfrak{r}+2),\sharp} \in \mathscr{H}_{2\mathfrak{r}+2}$ is nothing but the $\nu$-nonresonant part of $L^{(2\mathfrak{r}+2)}$ (see \eqref{eq:cestlapartienonresonnantedemachin}). Therefore, we have
\begin{equation}
\label{eq:jolie_astuce}
\big\| \frac1{n!} L^{(2\mathfrak{r}+2)}  + \frac1{(n+1)!} \{\chi,Z_2 \} \big\|_{\ell^\infty} \leq \frac1{n!} \| L^{(2\mathfrak{r}+2)} \|_{\ell^\infty}
\end{equation}
and so we do not have to take into account the contribution of the terms associated with $Z_2$ in the estimate of $\| L^{(2q),\sharp} \|_{\ell^\infty}$ for $q\geq \mathfrak{r}+1$.
As a consequence, we deduce of Proposition \ref{prop:Poisson} that, for all $2\leq q\leq r$, $L^{(2q),\sharp} \in \mathscr{H}_{2q}$ is indeed a homogeneous polynomial of degree $2q$ and that it satisfies the bound
\begin{equation}
\label{eq:pouet_pouet}
\| L^{(2q),\sharp} \|_{\ell^\infty} \leq \sum_{ \substack{ n \geq 0, \ q'\geq 2 \\
n \mathfrak{r} + q' = q}} \frac{4^n (\mathfrak{r}+1)^n \| L^{(2\mathfrak{r}+2)} \|_{\ell^\infty}^n }{\nu^n n!}   \| L^{(2q')} \|_{\ell^\infty}  \prod_{j=0}^{n-1} (q'+ \mathfrak{r} j).
\end{equation}
By induction hypothesis, we know that $\| L^{(2q)} \|_{\ell^\infty} \leq C_2^{2q - 3} \nu^{-q+2} \min(q,\mathfrak{r})^{2(q-2)} $ for all $q$. We aim at proving that $\| L^{(2q,\sharp)} \|_{\ell^\infty} \leq C_2^{2q - 3} \nu^{-q+2} \min(q,\mathfrak{r}+1)^{2(q-2)}$.

First, we note that since the sum in \eqref{eq:pouet_pouet} is trivial for $q\leq \mathfrak{r}+1$ (i.e. it is reduced to $n=0$), we only have to focus on the case $q \geq \mathfrak{r}+2$ (else the estimate is obvious).

Then, using the induction hypothesis in \eqref{eq:pouet_pouet}, for $q \geq \mathfrak{r}+2$, we get\footnote{we simply control the product by $q^n$.}
\begin{multline*}
B_q^\sharp := \frac{\| L^{(2q),\sharp} \|_{\ell^\infty} }{C_2^{2q - 3} \nu^{-q+2} (\mathfrak{r}+1)^{2(q-2)}}\leq \sum_{ \substack{ n \geq 0, \ q'\geq 2 \\
n \mathfrak{r} + q' = q}}  \nu^{-n+(q-2) -(q'-2) - n(\mathfrak{r}-1 ) }  C_2^{  -(2q-3) + n( 2 \mathfrak{r}-1 ) +  (2q'-3) }  \\
\times (\mathfrak{r}+1)^{n-2(q-2)}  \mathfrak{r}^{ 2n(\mathfrak{r}-1) + 2(q'-2) }   \frac{4^n q^n}{n !} 
\end{multline*}
Therefore, thanks to the relation $n \mathfrak{r} + q' = q$, we get
$$
B_q^\sharp \leq  \sum_{ \substack{ n \geq 0, \ q'\geq 2 \\ n \mathfrak{r} + q' = q}} C_2^{-n}    (\mathfrak{r}+1)^{n-2(q-2)}  \mathfrak{r}^{ 2(q-2) -2n}      \frac{4^n q^n}{n !} \leq  \sum_{ \substack{ n \geq 0, \ q'\geq 2 \\ n \mathfrak{r} + q' = q}}  q^n \Big(  \frac{ \mathfrak{r}}{  \mathfrak{r}+1} \Big)^{2(q-2)}      \frac{\mathfrak{r}^{-n} 8^n C_2^{-n} }{n !}.
$$
Then, using the estimate $A^{-q} q^n \leq e^{-n} n^n (\log(A))^{-n} $ whenever $A>1$ and the convexity of the logarithm, we get
$$
q^n \Big(  \frac{ \mathfrak{r}}{  \mathfrak{r}+1} \Big)^{2(q-2)}  \leq 2^4 2^{-n} e^{-n} n^n \Big(\log\big( 1+ \frac1{\mathfrak{r}}  \big)\Big)^{-n} \leq  2^4  e^{-n} n^n \big(\log(2)\big)^{-n} \mathfrak{r}^n.
$$
Therefore, since $e^{-n} n^n   \leq  n! $, we get
$$
B_q^\sharp \leq \Big(  \frac{ \mathfrak{r}}{  \mathfrak{r}+1} \Big)^{2(q-2)}  + \sum_{ \substack{ n \geq 1, \ q'\geq 2 \\ n \mathfrak{r} + q' = q}}  2^4  8^{n}  \big(\log(2)\big)^{-n} C_2^{-n}.
$$
Moreover, since $q\geq\mathfrak{r}+2$ and $\mathfrak{r}\geq 1$, we have 
$$
 \Big(  \frac{ \mathfrak{r}}{  \mathfrak{r}+1} \Big)^{2(q-2)}   \leq  \Big(  \frac{ \mathfrak{r}}{  \mathfrak{r}+1} \Big)^{2\mathfrak{r}} \leq e^{-1} 
$$
and so, as expected, since $C_2$ can be chosen large enough,
\begin{equation}
\label{eq:C22}
B_q^\sharp \leq e^{-1} + \frac{2^8 }{\log(2)} C_2^{-1} \leq 1.
\end{equation}

\medskip

\noindent \emph{\underline{$\triangleright$ Step 5 : Estimate of the new remainder term in $\ell^1$.}} Finally, we just have to control the new remainder term $R^\sharp$ (given by \eqref{eq:defRem}).  Reasoning as in \eqref{eq:jolie_astuce}, with a small abuse of notations, we ignore the contribution of the terms coming from $\{\chi,Z_2\}$.

We have to control terms of the form  $\nabla (Q \circ \Phi^{t}_\chi )(u)$ where $\|u\|_{\ell^1} < 2\rho^{\sharp}$ and $Q$ is a smooth function in $\ell^1$. First, let us prove the following formula which is very convenient 
\begin{equation}
\label{eq:jaimepaslesadjoints}
\nabla (Q \circ \Phi^{t}_\chi )(u) = -i (\mathrm{d} \Phi^{-t}_\chi) (\Phi^{t}_\chi(u)) \big[i (\nabla Q) (\Phi^{t}_\chi(u))\big] \quad on \quad B_{\ell^1}(0,2 \rho^\sharp).
\end{equation}
Indeed, by \eqref{eq:invertPhichi} and \eqref{eq:closeidl1}, we have $\Phi_\chi^t \circ \Phi_\chi^{-t} = \mathrm{id}_{\ell^1}$
on $B_{\ell^1}(0,\varepsilon_\chi/2)$ and so 
$$
[(\mathrm{d}\Phi_\chi^t)  \circ \Phi_\chi^{-t}] \mathrm{d}\Phi_\chi^{-t} = \mathrm{id}_{\ell^1} \quad on \quad B_{\ell^1}(0,\varepsilon_\chi/2).
$$
We note that if $\| u \|_{\ell^1} < 2 \rho^\sharp$ then $\| \Phi_\chi^{t}(u) \|_{\ell^1} < 2\rho <\varepsilon_\chi/2$ (see \eqref{eq:valabon},\eqref{eq:cestlesvacances}) and so we have
$$
(\mathrm{d}\Phi_\chi^t)   [(\mathrm{d}\Phi_\chi^{-t}) \circ \Phi_\chi^t] = \mathrm{id}_{\ell^1} \quad on \quad B_{\ell^1}(0,2 \rho^\sharp).
$$
Therefore, since $\Phi_\chi^t$ is symplectic, if $\|u\|_{\ell^1} < 2\rho^{\sharp}$, we have, for all $v \in \ell^1$
\begin{equation*}
\begin{split}
&(\nabla (Q \circ \Phi_\chi^t)(u),v)_{L^2} = \mathrm{d} (Q \circ  \Phi_\chi^t)(u)(v) = ((\nabla Q) \circ \Phi_\chi^t (u),\mathrm{d}\Phi_\chi^t (u)(v))_{L^2} \\
=&  (i(\nabla Q) \circ \Phi_\chi^t (u),i\mathrm{d}\Phi_\chi^t (u)(v))_{L^2} =  \Big(\mathrm{d}\Phi_\chi^t(u)   [(\mathrm{d}\Phi_\chi^{-t})( \Phi_\chi^t(u))] \big(i(\nabla Q) \circ \Phi_\chi^t (u)\big),i\mathrm{d}\Phi_\chi^t (u)(v)\Big)_{L^2} \\
=& ( (\mathrm{d}\Phi_\chi^{-t})( \Phi_\chi^t(u)) \big(i(\nabla Q) \circ \Phi_\chi^t (u)\big),i v )_{L^2} = (-i (\mathrm{d}\Phi_\chi^{-t})( \Phi_\chi^t(u)) \big(i(\nabla Q) \circ \Phi_\chi^t (u)\big), v )_{L^2}
\end{split}
\end{equation*}
which is clearly equivalent to \eqref{eq:jaimepaslesadjoints}.

Now thanks to this identity \eqref{eq:jaimepaslesadjoints} and the estimate \eqref{eq:closeiddiffl1} on $\mathrm{d} \Phi^{-t}_\chi$, by the triangular inequality, we have
\begin{equation*}
\begin{split}
\|\nabla R^\sharp(u) \|_{\ell^1} &\leq \|\nabla (R \circ \Phi_\chi^1)(u) \|_{\ell^1} + \max_{0\leq t \leq 1} \sum_{q=2}^r  \frac{1}{m_q!} \, \| \nabla (\mathrm{ad}_\chi^{m_q+1} L^{(2q)} \circ \Phi_\chi^t (u))\|_{\ell^1} \\
&\lesssim \|(\nabla R) \circ \Phi_\chi^1(u) \|_{\ell^1} + \max_{0\leq t \leq 1} \sum_{q=2}^r  \frac{1}{m_q!} \, \| \nabla (\mathrm{ad}_\chi^{m_q+1} L^{(2q)}) \circ \Phi_\chi^t (u)\|_{\ell^1}.
\end{split}
\end{equation*}
Then, estimating the vector field of $\mathrm{ad}_\chi^{m_q+1} L^{(2q)}$ by Corollary \ref{cor:est_vfl1}, we get
\begin{equation*}
\begin{split}
\|\nabla R^\sharp(u) \|_{H^s} \lesssim   r \sum_{q=2}^r  \frac{ \| \mathrm{ad}_\chi^{m_q+1} L^{(2q)} \|_{\ell^\infty}}{m_q!} \,  \|  \Phi_\chi^t (u)\|_{L^\infty_t \ell^1}^{2 f_q+1}
+ \|(\nabla R) \circ \Phi_\chi^1(u) \|_{\ell^1}
\end{split}
\end{equation*}
where $f_q := (m_q+1) \mathfrak{r} + q-1$. We note that by definition of $m_q$, we have $r \leq f_q < 2r$. Therefore, since $\Phi_\chi^t$ is close to the identity, we have the estimates 
$$ \|  \Phi_\chi^t (u)\|_{L^\infty_t \ell^1}^{2 f_q+1} \leq 2^{4r+1} \|u\|_{\ell^1}^{2 f_q+1}
$$
Estimating the sum as at the previous step (thanks to Proposition \ref{prop:Poisson}), it comes
\begin{equation*}
\begin{split}
\|\nabla R^\sharp(u) \|_{\ell^1} \lesssim & 2^{5r} \sum_{q=2}^r  \big( \frac{8}{(\log 2) C_2} \big)^{m_q+1}   (m_q+1) \big( C_2^{2f_q  -1} \nu^{-f_q+1} (\mathfrak{r}+1)^{2(f_q-1)} \big)  \|  u\|_{\ell^1}^{2 f_q+1} 
\\ &+ \|(\nabla R) \circ \Phi_\chi^1(u) \|_{\ell^1}.
\end{split}
\end{equation*}
Recalling that by definition $(\mathfrak{r}+1)^{2} \nu^{-1} C_2^{2} = (16 \cdot 7  \rho^{\sharp})^2 \geq (2\rho^{\sharp} )^2$, we have
\begin{equation*}
\begin{split}
 C_2^{2f_q - 1} \nu^{-f_q+1} (\mathfrak{r}+1)^{2(f_q-1)}   \|  u\|_{ \ell^1}^{2 f_q} &\leq C_2^{ 2r - 1} \nu^{-r+1} (\mathfrak{r}+1)^{2(r-1)} \big( \frac{\|  u\|_{ \ell^1} }{ 2 \rho^\sharp}\big)^{2 (f_q -r)} \|  u\|_{ \ell^1}^{2r} \\
 &\leq C_2 (C_2^2 \nu^{-1} r^{2})^{r-1}  \|  u\|_{ \ell^1}^{2r}
 \end{split}
\end{equation*}
and so (since $m_q \leq r$ and $ \frac{8}{(\log 2) C_2}<1$)
$$
\|\nabla R^\sharp(u) \|_{\ell^1} \lesssim  (2^5 C_2^2 \nu^{-1} r^{2})^{r-1}  \|  u\|_{ \ell^1}^{2r+1}  +  \|(\nabla R) \circ \Phi_\chi^1(u) \|_{\ell^1}.
$$
Now, thanks to induction hypothesis, since $ \Phi_\chi^1$ is close to the identity, we have
\begin{equation*}
\begin{split}
 \|(\nabla R) \circ \Phi_\chi^1(u) \|_{\ell^1} &\leq   K_{\ell^1}^{\mathfrak{r}} (2^5 C_2^2 \nu^{-1} r^{2})^{r-1} \big( \prod_{j=1}^{\mathfrak{r}-1} (1+2^{-2j} ) \big)^{2r}  \|  \Phi_\chi^1(u) \|_{ \ell^1}^{2r+1}  \\
 &\leq  K_{\ell^1}^{\mathfrak{r}} (2^5 C_2^2 \nu^{-1} r^{2})^{r-1} \big( \prod_{j=1}^{\mathfrak{r}} (1+2^{-2j} ) \big)^{2r}  \|  u \|_{ \ell^1}^{2r+1} 
 \end{split}
\end{equation*}
and so
\begin{equation}
\label{eq:defKell1}
\|\nabla R^\sharp(u) \|_{\ell^1} \lesssim (2^5 C_2^2 \nu^{-1} r^{2})^{r-1}  (1  +   K_{\ell^1}^{\mathfrak{r}}  \big( \prod_{j=1}^{\mathfrak{r}} (1+2^{-2j} ) \big)^{2r}  )\|  u \|_{ \ell^1}^{2r+1}.
\end{equation}
Therefore, provided that the constant $K_{\ell^1}>1$ is chosen large enough, we deduce that as expected 
$$
\|\nabla R^\sharp(u) \|_{\ell^1} \leq K_{\ell^1}^{\mathfrak{r}+1} (2^5 C_2^2 \nu^{-1} r^{2})^{r-1} \big( \prod_{j=1}^{\mathfrak{r}} (1+2^{-2j} ) \big)^{2r}  \|  u\|_{ \ell^1}^{2r+1}.
$$

\medskip

\noindent \emph{\underline{$\triangleright$ Step 6 : Estimate of the new remainder term in $H^s$.}} The proof is similar to the one in $\ell^1$. As previously, with a small abuse of notations, we ignore the contribution of the terms coming from $\{\chi,Z_2\}$. Thanks to this identity \eqref{eq:jaimepaslesadjoints} and the estimate \eqref{eq:est_tamedvf} on $\mathrm{d} \Phi^{-t}_\chi$, by the triangular inequality, we have
\begin{equation*}
\begin{split}
\|\nabla R^\sharp(u) \|_{H^s} \leq& \|\nabla (R \circ \Phi_\chi^1)(u) \|_{H^s} + \max_{0\leq t \leq 1} \sum_{q=2}^r  \frac{1}{m_q!} \, \| \nabla (\mathrm{ad}_\chi^{m_q+1} L^{(2q)} \circ \Phi_\chi^t (u))\|_{H^s} \\
\lesssim_s& \|(\nabla R) \circ \Phi_\chi^1(u) \|_{H^s} +  \sum_{q=2}^r  \frac{1}{m_q!} \, \| \nabla (\mathrm{ad}_\chi^{m_q+1} L^{(2q)}) \circ \Phi_\chi^t (u)\|_{L^\infty_t H^s} \\
&+ \varepsilon_\chi^{-1} \| \Phi_\chi^t (u)\|_{L^\infty_t H^s} \Big( \|(\nabla R) \circ \Phi_\chi^1(u) \|_{\ell^1} +  \sum_{q=2}^r  \frac{1}{m_q!} \, \| \nabla (\mathrm{ad}_\chi^{m_q+1} L^{(2q)}) \circ \Phi_\chi^t (u)\|_{L^\infty_t \ell^1} \Big).
\end{split}
\end{equation*}
Fortunately, the previous step has been devoted to estimating this last term in parenthesis. Therefore, since $\|  \Phi_\chi^t (u)\|_{L^\infty_t H^s} \lesssim_s \|u\|_{H^s}$ and $\|u\|_{\ell^1} \varepsilon_\chi^{-1}<1$, we have
\begin{equation*}
\begin{split}
\|\nabla R^\sharp(u) \|_{H^s} \lesssim_s& \|(\nabla R) \circ \Phi_\chi^1(u) \|_{H^s} +  \sum_{q=2}^r  \frac{1}{m_q!} \, \| \nabla (\mathrm{ad}_\chi^{m_q+1} L^{(2q)}) \circ \Phi_\chi^t (u)\|_{L^\infty_t H^s} \\
&+ K_{\ell^1}^{\mathfrak{r}} (2^5 C_2^2 \nu^{-1} r^{2})^{r-1} \big( \prod_{j=1}^{\mathfrak{r}} (1+2^{-2j} ) \big)^{2r}  \|  u\|_{ \ell^1}^{2r}\|  u\|_{ H^s}  .
\end{split}
\end{equation*}
Estimating the vector field of $\mathrm{ad}_\chi^{m_q+1} L^{(2q)}$ by Corollary \ref{cor:est_vf} and proceeding as previously (i.e. as in step $5$), we get
$$
 \sum_{q=2}^r  \frac{1}{m_q!} \, \| \nabla (\mathrm{ad}_\chi^{m_q+1} L^{(2q)}) \circ \Phi_\chi^t (u)\|_{L^\infty_t H^s} \lesssim_s  (2^5 C_2^2 \nu^{-1} r^{2})^{r-1}  \|  u\|_{ \ell^1}^{2r} \|  u\|_{ H^s} 
$$
Now, thanks to induction hypothesis, since $ \Phi_\chi^1$ is close to the identity (in $\ell^1$ and $H^s$) we have
\begin{equation*}
\begin{split}
 \|(\nabla R) \circ \Phi_\chi^1(u) \|_{H^s} &\leq  K_s^{\mathfrak{r}} (2^5 C_2^2 \nu^{-1} r^{2})^{r-1} \big( \prod_{j=1}^{\mathfrak{r}-1} (1+2^{-2j} ) \big)^{2r}  \|  \Phi_\chi^1(u) \|_{ \ell^1}^{2r} \|  \Phi_\chi^1(u) \|_{ H^s} \\
 &\lesssim_s   K_s^{\mathfrak{r}} (2^5 C_2^2 \nu^{-1} r^{2})^{r-1} \big( \prod_{j=1}^{\mathfrak{r}} (1+2^{-2j} ) \big)^{2r}  \|  u \|_{ \ell^1}^{2r} \|  u \|_{ H^s} 
 \end{split}
\end{equation*}
and so
\begin{equation}
\label{eq:defK_s}
\|\nabla R^\sharp(u) \|_{H^s} \lesssim_s (2^5 C_2^2 \nu^{-1} r^{2})^{r-1}  (1  +  (K_s^{\mathfrak{r}} +  K_{\ell^1}^{\mathfrak{r}}) \big( \prod_{j=1}^{\mathfrak{r}} (1+2^{-2j} ) \big)^{2r}  )\|  u \|_{ \ell^1}^{2r} \|  u \|_{ H^s}.
\end{equation}
Therefore, the constant $K_s\geq K_{\ell^1} >1$ can be chosen large enough\footnote{note it can be easily checked that this definition is not circular : the constant in \eqref{eq:defK_s} does not depend on $K_s$.} to get
$$
\|\nabla R^\sharp(u) \|_{H^s} \leq K_s^{\mathfrak{r}+1} (2^5 C_2^2 \nu^{-1} r^{2})^{r-1} \big( \prod_{j=1}^{\mathfrak{r}} (1+2^{-2j} ) \big)^{2r}  \|  u\|_{ \ell^1}^{2r} \| u \|_{ H^s} 
$$
which conclude the induction.

\end{proof}

\section{Proof of Theorem \ref{th-main}}

In this section we aim at proving Theorem \ref{th-main} thanks to the Birkhoff normal form theorem (Thm \ref{thm:Birk}). Therefore, we recall that \eqref{NLS} rewrites as an Hamiltonian system
$$
i\partial_t u = \frac12 \nabla H(u)
$$
where the Hamiltonian $H$ of \eqref{NLS} writes
\begin{equation}
\label{eq:HamiltofNLS}
H(u) := Z_2(u) + P(u)  
\end{equation}
with
$$
Z_2(u) = \int_{\mathbb{T}^d} |\nabla u(x)|^2 + (V \ast u)(x) \overline u(x)  \mathrm{d}x \quad \mathrm{and} \quad  P(u) = \int_{\mathbb{T}^d} \frac{\sigma}{p+1} |u(x)|^{2p+2} \mathrm{d}x.
$$
In order to apply the results of Section \ref{sec:Hamform}, it is worth to notice that these functions rewrites
\begin{equation}
\label{eq:defZ2NLS}
Z_2(u) =  \sum_{k \in \mathbb{Z}^d} \omega_k |u_k|^2 \quad \mathrm{where} \quad \omega_k := |k|^2 + (2\pi)^{-d/2} V_k
\end{equation}
and 
$$
P(u) = \frac{\sigma (2\pi)^{-pd}}{p+1} \sum_{\boldsymbol{k}_1 + \cdots + \boldsymbol{k}_{p+1} = \boldsymbol{\ell}_1 + \cdots + \boldsymbol{\ell}_{p+1} } u_{\boldsymbol{k}_1} \dots u_{\boldsymbol{k}_{p+1}} \overline{u_{\boldsymbol{\ell}_1}} \dots \overline{u_{\boldsymbol{\ell}_{p+1}}}.
$$
Note that $P\in \mathscr{H}_{2p+2}$ and satisfies $\| P\|_{\ell^\infty} \leq (2\pi)^{-pd} (p+1)^{-1}$.

Before proving Theorem \ref{th-main} thanks to a bootstrap argument, we begin with two technical subsections in which we define the non-resonant potential (i.e. the set $\mathcal{V}$) and we study the resonant Hamiltonians (according to Definition \ref{def:resHam}).
\subsection{Non resonant potentials}\label{sec:NR} First, we define the set $\mathcal{V}$ of the Fourier multiplier $V$ for which we are going to prove Theorem \ref{th-main}.

\begin{definition}[Set $\mathcal{V}$] \label{def:setV} A Fourier multiplier $V \in \ell^\infty(\mathbb{Z}^d; \mathbb{R})$ belongs to $\mathcal{V}$ if exists $\gamma>0$ such that for all $q\geq 2$, all $\boldsymbol{k},\boldsymbol{\ell} \in (\mathbb{Z}^d)^q$ we have
$$
|\Omega(\boldsymbol{k},\boldsymbol{\ell})| \geq \gamma\, q^{-4} \big(\log_2 \max_{1\leq j\leq q} (| \boldsymbol{k}_j |, | \boldsymbol{\ell}_j |)\big)^{-(2q+1)} \quad \mathrm{whenever} \quad (\boldsymbol{k}, \boldsymbol{\ell}) \quad \mathrm{satisfies \ \eqref{eq:monomials_to_remove}.}
$$
\end{definition} 
\begin{remark} We recall that the small divisors $\Omega(\boldsymbol{k},\boldsymbol{\ell})$ are defined in Definition \ref{def:SD} (the frequencies $\omega_k$ being given by \eqref{eq:defZ2NLS}) and  that the condition "$(\boldsymbol{k}, \boldsymbol{\ell})$ satisfies  \eqref{eq:monomials_to_remove}" only means that the monomial $u_{\boldsymbol{k}_1} \dots u_{\boldsymbol{k}_q} \overline{u_{\boldsymbol{\ell}_1}} \dots \overline{u_{\boldsymbol{\ell}_q}}$ does not commute with the super-actions $J_n$ (defined in \eqref{eq:def_Ns_J_n}).
\end{remark}

\begin{remark} In order to include more potentials, we could easily extend this definition by considering estimates of the form
$
|\Omega(\boldsymbol{k},\boldsymbol{\ell})| \geq \gamma\, c_1^q \big(\log \max_{1\leq j\leq q} (| \boldsymbol{k}_j |, | \boldsymbol{\ell}_j |)\big)^{-c_2 q}
$
where $c_1,c_2>0$ would depend on $V$ but for simplicity we chose to have explicit constants (the constant $4$ in the definition of $T_\varepsilon$ in Theorem \ref{th-main} would then be related to $c_2$). 
\end{remark}

In the following lemma (which is the main result of this section), we prove that the set $\mathcal{V}$ is non-empty.
\begin{lemma}\label{lemma:nonres}
Almost surely, the random potential $V$ defined by \eqref{eq:def_V_rand} belongs to $\mathcal{V}$. 
\end{lemma}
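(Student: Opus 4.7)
My strategy is a Borel--Cantelli argument that fully exploits the block-constant structure $V_k = X_{n(k)}$. First I would rewrite the small divisor as an affine function of finitely many $X_n$'s: setting $\beta = (2\pi)^{-d/2}$ and
$$
m(\boldsymbol{k},\boldsymbol{\ell}) := \sum_{j=1}^q |\boldsymbol{k}_j|^2 - \sum_{j=1}^q |\boldsymbol{\ell}_j|^2 \in \mathbb{Z}, \qquad a_n(\boldsymbol{k},\boldsymbol{\ell}) := \#\{j:\boldsymbol{k}_j\in B_n\} - \#\{j:\boldsymbol{\ell}_j\in B_n\} \in \mathbb{Z},
$$
one immediately obtains
$$
\Omega(\boldsymbol{k},\boldsymbol{\ell}) = 2i\Bigl(m + \beta\sum_{n\geq 0} a_n X_n\Bigr),
$$
so $\Omega$ depends on the tuple $(\boldsymbol{k},\boldsymbol{\ell})$ only through the pair $(a,m)$. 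The condition \eqref{eq:monomials_to_remove} is exactly ``some $a_n\neq 0$'', and one always has the crucial trivial bound $\sum_n|a_n|\leq 2q$.

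Next I would prove the core anti-concentration estimate. For any admissible $(a,m)$, let $M_0 := \max\{n : a_n\neq 0\}$ and condition on $(X_n)_{n\neq M_0}$: the remaining quantity $m+\beta\sum_n a_n X_n$ is affine in $X_{M_0}\sim\mathcal U(0,1)$ with nonzero slope $\beta a_{M_0}$, hence uniformly distributed on an interval of length $\beta|a_{M_0}|\geq\beta$. Therefore
$$
\mathbb{P}\bigl(|\Omega(\boldsymbol{k},\boldsymbol{\ell})|<\delta\bigr)\leq \frac{\delta}{\beta|a_{M_0}|}\leq \beta^{-1}\delta.
$$
Because $\Omega$ only sees $(a,m)$, the union over $(\boldsymbol{k},\boldsymbol{\ell})$ reduces to a union over distinct admissible pairs $(a,m)$, which is where the block-constant structure gives a drastic saving compared to the generic situation (from sums indexed by $k\in\mathbb{Z}^d$ to sums indexed by $n\simeq\log_2|k|$, as the authors stress in the introduction).

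The main combinatorial work is then to enumerate these pairs. For each admissible $(a,m)$, let $q_0 := \max(\sum_n a_n^+,\sum_n a_n^-)\geq 1$ be the minimum length realizing it; this is the value of $q$ maximizing the threshold $\gamma q^{-4}(\log_2 N)^{-(2q+1)}$. Any $(\boldsymbol{k},\boldsymbol{\ell})$ realizing $(a,m)$ satisfies $\log_2 N\geq M_0$, so the worst-case threshold is $\gamma q_0^{-4}M_0^{-(2q_0+1)}$. The constraint $\sum_n|a_n|\leq 2q_0$ forces $a$ to have at most $2q_0$ nonzero entries (one of which sits at index $M_0$), yielding a stars-and-bars count polynomial in $M_0$ of degree $O(q_0)$ with Stirling-type prefactor in $q_0$; and only integer $m$ with $|m|\lesssim q_0$ gives a positive-probability event, producing $O(q_0)$ admissible $m$'s. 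A union bound then gives an estimate of the form $\mathbb{P}(V\notin\mathcal V\text{ at level }\gamma)\lesssim \gamma\,S$ for an explicit double sum $S$ over $(q_0,M_0)$; letting $\gamma\to 0$ and invoking monotone convergence yields $\mathbb{P}(V\in\mathcal V)=1$.

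The main obstacle is precisely to verify finiteness of $S$: the exponent $2q+1$ on $\log_2 N$ and the factor $q^{-4}$ in the definition of $\mathcal V$ must be precisely calibrated so that the combinatorial growth of the count of integer points in $\{a\in\mathbb{Z}^{M_0+1}:\sum_n|a_n|\leq 2q_0,\,a_{M_0}\neq 0\}$ is overcome by the decay $q_0^{-4}M_0^{-(2q_0+1)}$. This requires careful multinomial/Stirling bookkeeping and plausibly a sharper bound on admissible $m$'s when $M_0$ is large (exploiting that the integrality of $|\boldsymbol{k}_j|^2-|\boldsymbol{\ell}_j|^2$ and the constraint $|m|\lesssim q_0$ jointly restrict the realizable $(a,m)$ more than a naive count suggests). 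This delicate convergence check is where the improvement over the standard small-divisor estimates \eqref{eq:standard_SD_est} is paid for, and I expect the paper's proof to consist essentially of these two ingredients (the affine representation together with the one-block conditioning, and the multinomial enumeration) carried out with sharp constants.
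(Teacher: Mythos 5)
Your proposal is essentially the paper's argument: the affine representation $\Omega = 2i(m + \beta\sum_n a_n X_n)$, the one-coordinate conditioning giving $\mathbb{P}(|\Omega|<\delta)\leq \beta^{-1}\delta$, the observation that only $O(q)$ integers $m$ yield a positive-probability event, and then a union bound summed with the weights coming from the definition of $\mathcal V$. The one structural difference is the enumeration: the paper sums crudely over all tuples of block indices $(\boldsymbol{n},\boldsymbol{m})\in\mathbb{N}^{2q}$ (not up to permutation and not collapsed to $(a,m)$), counts those with $\max=M$ via $M^{2q}-(M-1)^{2q}\leq 2qM^{2q-1}$, pairs this with the weight $q^{-4}M^{-(2q+1)}$ and the $2q+1$ choices of the integer part, and gets $\gamma\sum_q(2q+1)q^{-4}\cdot 2q\sum_M M^{-2}\lesssim\gamma\sum_q q^{-2}<\infty$. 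Your reduction to $(a,m)$ pairs is a genuine (if unnecessary) sharpening of the union bound, and it does close: the constraint $a_{M_0}\neq 0$ pins one of the at most $2q_0$ nonzero slots to the index $M_0$, so the stars-and-bars count is polynomial of degree $2q_0-1$ (not $2q_0$) in $M_0$, while the weight is $M_0^{-(2q_0+1)}$; the net exponent is $-2$, summable, exactly as in the paper's cruder count. So the ``delicate convergence check'' you flag as the main obstacle is in fact automatic once you record that $-1$ correctly, and no sharper restriction on the admissible $m$'s is needed beyond $|m|\lesssim q$. Your sketch is correct; its one inaccuracy is overestimating how fragile the bookkeeping is, and its one small omission is that $q_0$ as you define it can be $1$, whereas the definition of $\mathcal V$ only quantifies over $q\geq 2$, so you should replace $q_0$ by $\max(q_0,2)$ in the worst-case threshold.
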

\begin{proof}

We aim at estimating the probability of the following events
$$
E_\gamma := \big\{\, \exists q\geq 2, \exists \boldsymbol{k},\boldsymbol{\ell} \in (\mathbb{Z}^d)^q,\quad (\boldsymbol{k}, \boldsymbol{\ell}) \quad \mathrm{satisfies \ \eqref{eq:monomials_to_remove}}  \quad \Rightarrow \quad |\Omega(\boldsymbol{k},\boldsymbol{\ell})|\leq \gamma c_{\boldsymbol{k},\boldsymbol{\ell}} \, \big\}
$$
where $\gamma>0$ and the constants $c_{\boldsymbol{k},\boldsymbol{\ell}}>0$ will be defined later. \\
\emph{\underline{Step $1$ : To Make the multiplicities appear.}} By definition of $V$, we note that $\Omega(\boldsymbol{k},\boldsymbol{\ell})$ writes under the form 
$$
|\Omega(\boldsymbol{k},\boldsymbol{\ell})| = 2 \, | a + (2\pi)^{-d/2} (X_{\boldsymbol{n}_1}+ \cdots + X_{\boldsymbol{n}_q} - X_{\boldsymbol{m}_1} - \cdots -X_{\boldsymbol{m}_q}  )  | =: \Upsilon_a(\boldsymbol{n},\boldsymbol{m})
$$
where $a\in \mathbb{Z}$ is an integer (depending on $\boldsymbol{k},\boldsymbol{\ell}$) and $\boldsymbol{n}_j$ is the integer such that $\boldsymbol{k}_j \in B_{\boldsymbol{n}_j}$ ($\boldsymbol{m}_j$ being defined similarly with respect to $\boldsymbol{\ell}_j$) and $(B_{n})_n$ denotes the usual dyadic decomposition of the Fourier space given by \eqref{eq:def_dya}. Therefore, provided that $\rho_{\boldsymbol{n},\boldsymbol{m}}>0$ is a constant such that 
$\rho_{\boldsymbol{n},\boldsymbol{m}} \geq c_{\boldsymbol{k},\boldsymbol{\ell}},$ by definition of the non-resonance condition \eqref{eq:monomials_to_remove},
we have
$$
E_\gamma \subset  \bigcup_{q \geq 2} \bigcup_{ \substack{  \boldsymbol{n},\boldsymbol{m} \in \mathbb{N}^q \\ \boldsymbol{n}  \notin \mathfrak{S}_q \boldsymbol{m}  }} \bigcup_{a\in \mathbb{Z}} \big\{ \Upsilon_a(\boldsymbol{n},\boldsymbol{m}) \leq \gamma \rho_{\boldsymbol{n},\boldsymbol{m}} \, \big\}
$$
where $\boldsymbol{n}  \notin \mathfrak{S}_q \boldsymbol{m} $ just mean that $\boldsymbol{n}\neq \boldsymbol{m}$ up  to  a  permutation. As a consequence, we deduce the estimate
$$
\mathbb{P} (E_\gamma ) \leq \sum_{q \geq 2} \sum_{ \substack{  \boldsymbol{n},\boldsymbol{m} \in \mathbb{N}^q \\ \boldsymbol{n}  \notin \mathfrak{S}_q \boldsymbol{m}  }} \sum_{a\in \mathbb{Z}} \ \mathbb{P} \big( \Upsilon_a(\boldsymbol{n},\boldsymbol{m}) \leq \gamma \rho_{\boldsymbol{n},\boldsymbol{m}} \, \big).
$$
\emph{\underline{Step $2$ : To reduce the sum with respect to $a$.}} Now, we note that since the random variables $X_{n}$ are bounded by $1$, if $|a| > q$ then
$$
\Upsilon_a(\boldsymbol{n},\boldsymbol{m}) \geq 2 |a| - 4  (2\pi)^{-d/2} q \geq 2 (|a| - q) \geq 2.
$$
Therefore, assuming from now that $\gamma$ and $\rho_{\boldsymbol{n},\boldsymbol{m}}$ are such that $\gamma \rho_{\boldsymbol{n},\boldsymbol{m}}< 2$, we have
$$
|a|>q \quad \Rightarrow \quad \mathbb{P} \big( \Upsilon_a(\boldsymbol{n},\boldsymbol{m}) \leq \gamma \rho_{\boldsymbol{n},\boldsymbol{m}} \, \big) = 0
$$
and so
\begin{equation}
\label{eq:sieste}
\mathbb{P} (E_\gamma ) \leq \sum_{q \geq 2} (2q+1) \sum_{ \substack{  \boldsymbol{n},\boldsymbol{m} \in \mathbb{N}^q \\ \boldsymbol{n}  \notin \mathfrak{S}_q \boldsymbol{m}  }} \ \sup_{a\in \mathbb{Z}} \ \mathbb{P} \big( \Upsilon_a(\boldsymbol{n},\boldsymbol{m}) \leq \gamma \rho_{\boldsymbol{n},\boldsymbol{m}} \, \big).
\end{equation}
\emph{\underline{Step $3$ : Estimation of $\mathbb{P} \big( \Upsilon_a(\boldsymbol{n},\boldsymbol{m}) \leq \gamma \rho_{\boldsymbol{n},\boldsymbol{m}} \, \big)$.}}
We note that since $ \boldsymbol{n}  \notin \mathfrak{S}_q \boldsymbol{m}$,  $ \Upsilon_a(\boldsymbol{n},\boldsymbol{m}) $ writes under the form
$$
\Upsilon_a(\boldsymbol{n},\boldsymbol{m}) = 2 (2\pi)^{-d/2}| b X_k + Y |
$$
where $b\in \mathbb{Z}^*$, $k \in \{  \boldsymbol{n}_1,\cdots, \boldsymbol{n}_q ,  \boldsymbol{m}_1,\cdots,  \boldsymbol{m}_q  \}$ and $Y$ is random variable independent of $X_k$. Therefore, since $X_k \sim \mathcal{U}(0,1)$ is uniformly distributed in $(0,1)$, we have
\begin{equation}
\label{eq:brique}
\mathbb{P}(|\Omega(\boldsymbol{k},\boldsymbol{\ell})|\leq \gamma \rho_{\boldsymbol{n},\boldsymbol{m}}) = \mathbb{E} \int_0^1 \mathbbm{1}_{2 (2\pi)^{-d/2}|bx_k + Y|\leq \gamma \rho_{\boldsymbol{n},\boldsymbol{m}}}  \mathrm{d}x_k\leq (2\pi)^{d/2} \gamma \rho_{\boldsymbol{n},\boldsymbol{m}}.
\end{equation}
\emph{\underline{Step $4$ : Conclusion.}} Putting \eqref{eq:sieste} and \eqref{eq:brique} together we deduce that
\begin{equation}
\label{jeanaiunpeumarre}
\mathbb{P} (E_\gamma ) \lesssim_d \gamma  \sum_{q \geq 2} (2q+1) \sum_{ (\boldsymbol{n},\boldsymbol{m}) \in \mathbb{N}^{2q}\setminus \{(0,0)\} } \rho_{\boldsymbol{n},\boldsymbol{m}}.
\end{equation}
Therefore, we set
$$
c_{\boldsymbol{k},\boldsymbol{\ell}} =  q^{-4} \big(\log_2 \max_{1\leq j\leq q} (| \boldsymbol{k}_j |, | \boldsymbol{\ell}_j |)\big)^{-(2q+1)}  \quad \mathrm{and} \quad    \rho_{\boldsymbol{n},\boldsymbol{m}} =q^{-4}  (\max_{1\leq j\leq q} (| \boldsymbol{m}_j | , | \boldsymbol{n}_j | ))^{-(2q+1)}.
$$
Since whenever\footnote{ and so $\max_{1\leq j\leq q} (| \boldsymbol{k}_j |, | \boldsymbol{\ell}_j | ) \neq 0$ .} $\boldsymbol{n}  \notin \mathfrak{S}_q \boldsymbol{m}$ 
$$
\max_{1\leq j\leq q} (| \boldsymbol{k}_j |, | \boldsymbol{\ell}_j | ) \geq 2^{\max_{1\leq j\leq q} (| \boldsymbol{m}_j |, | \boldsymbol{n}_j |)}
$$
as required, we have $\rho_{\boldsymbol{n},\boldsymbol{m}} \geq c_{\boldsymbol{k},\boldsymbol{\ell}}$.
Finally, thank to \eqref{jeanaiunpeumarre} and the mean value inequality, we get 
$$
\mathbb{P} (E_\gamma )  \lesssim_d \gamma \sum_{q \geq 2} q^{-3} \sum_{m\geq 1} \frac{m^{2q} - (m-1)^{2q} }{m^{2q+1}} \lesssim_d \gamma \sum_{q \geq 2} q^{-2} \lesssim_d \gamma
$$
which is enough to deduce that $\mathbb{P} \big(\bigcap_{\gamma>0} E_\gamma \big) \leq \inf_{\gamma >0} \mathbb{P} (E_\gamma )  = 0$.

\end{proof}

\subsection{Smallness of the resonant Hamiltonian} \label{sec:loglosses}As we can see in our Birkhoff normal form theorem (Thm \ref{thm:Birk}), we do not have removed the $\nu$-resonant terms (associated with $L$ in Thm \ref{thm:Birk}). In this subsection, we are going to prove (in Proposition \ref{prop:maintech} below) that they do not make increase to much some observables $\mathcal{N}_{N,s}$ which are equivalent to the square of the $H^s$ norm.

\begin{definition}[$\mathcal{N}_{N,s}$]\label{def:NNs} Let $N \geq 1$ be an integer of the form $N=2^{n_{\max}}$ with $n_{\max} \in \mathbb{N}$. For all $s>0$ and $u\in H^s(\mathbb{T}^d)$, we set
$$
\mathcal{N}_{N,s}(u) = \mathcal{N}_{N,s}^{(low)}(u) + \mathcal{N}_{N,s}^{(high)}(u)
$$
where
$$
\mathcal{N}_{N,s}^{(low)} = \sum_{0\leq n < n_{\max}} (2^{n})^{2s} J_n \quad \mathrm{and} \quad \mathcal{N}_{N,s}^{(high)}(u) = \sum_{k\geq N} |k|^{2s} |u_k|^2.
$$
\end{definition}
We recall that the super actions $J_n$ are defined in \eqref{eq:def_Ns_J_n}. In the proof of Theorem \ref{th-main}, the parameter $N$ will be optimized with respect to $\varepsilon$ (the size of the initial datum). It will be chosen much larger than usually in the literature\footnote{usually the truncation parameter is of the form $N = \varepsilon^{- \eta}$ with $0<\eta \ll 1$ (see e.g. \cite{Bam03,BG06,KillBill}).} : it will be of the form $N= \varepsilon^{-r(\varepsilon)}$ where $r(\varepsilon)$ goes to $+\infty$ as $\varepsilon$ goes to $0$. Of course, as expected we note that these observables are equivalent to the square of the $H^s$ norm :
\begin{equation*}
2^{-2s} \| \cdot \|_{H^s}^2 \leq \mathcal{N}_{N,s} \leq \|  \cdot \|_{H^s}^2.
\end{equation*}

The following proposition is the main result of this section. We prove that the $\nu$-resonant Hamiltonians almost commute with the $\mathcal{N}_{N,s}$ norm.
\begin{proposition} \label{prop:maintech} Let $V\in \mathcal{V}$ be a non-resonant potential (and $\gamma>0$ be the associated constant),  $N \geq 1$ be an integer of the form $N=2^{n_{\max}}$ with $n_{\max} \in \mathbb{N}^*$, $q\geq 2$ be an integer and $\nu \in (0,1)$ be a small real number such that
\begin{equation}
\label{eq:une_CFL}
 \gamma q^{-4} \big(\log_2 \big( 2qN \big) \big)^{-(2q+1)} \geq \nu .
\end{equation}
If $L \in \mathscr{H}^{(\nu-\mathrm{res})}$ is a $\nu$-resonant homogeneous polynomial of degree $2q$ then for $s\geq0$, $\eta \in(0,1]$ and $u\in \ell^1_\eta \cap H^s(\mathbb{T}^d)$ we have
$$
|\{ \mathcal{N}_{N,s}, L \}(u)| \lesssim_{s} q^{2s+1}  N^{-\eta} \| L \|_{\ell^\infty}\| u\|_{\ell^1_\eta} \| u\|_{\ell^1}^{2q-3} \| u\|_{H^s}^{2}.
$$
\end{proposition}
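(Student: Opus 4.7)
The plan is to leverage the non-resonance of $V\in\mathcal{V}$ to show that the ``low-frequency'' part of $L$ must commute with all the super-actions $J_n$, so that $\mathcal{N}_{N,s}$ almost commutes with $L$ up to a controlled error of order $N^{-\eta}$. First I would apply Proposition~\ref{prop:Poisson_vs_Z} to $\mathcal N_{N,s} = \sum_k g_k|u_k|^2$, whose coefficients are $g_k = 2^{2n(k)s}$ for $k\in B_{n(k)}$ with $n(k)<n_{\max}$ and $g_k=|k|^{2s}$ for $|k|\geq N$ (so $|g_k|\lesssim_s\langle k\rangle^{2s}$), to rewrite
$$\{\mathcal N_{N,s},L\}(u) = -2i\sum_{\boldsymbol{k},\boldsymbol{\ell}}\bigl(G(\boldsymbol{k})-G(\boldsymbol{\ell})\bigr)L_{\boldsymbol{k},\boldsymbol{\ell}}\,u_{\boldsymbol{k}_1}\cdots\overline{u_{\boldsymbol{\ell}_q}}, \qquad G(\boldsymbol{k}):=\sum_{j=1}^q g_{\boldsymbol{k}_j},$$
and I would split $L=L_\leq+L_>$ according to whether $\max_j(|\boldsymbol{k}_j|,|\boldsymbol{\ell}_j|)\leq 2qN$ or not.

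For $L_\leq$ the key observation is that on its support every pair $(\boldsymbol{k},\boldsymbol{\ell})$ must violate \eqref{eq:monomials_to_remove}, i.e.\ commute with all $J_n$: indeed, if \eqref{eq:monomials_to_remove} held, then Definition~\ref{def:setV}, the monotonicity of $x\mapsto(\log_2 x)^{-(2q+1)}$, and the CFL-type hypothesis \eqref{eq:une_CFL} would give $|\Omega(\boldsymbol{k},\boldsymbol{\ell})|\geq \gamma q^{-4}(\log_2(2qN))^{-(2q+1)}\geq\nu$, contradicting the $\nu$-resonance of $L$. Introducing the block-constant extension $\tilde g_k:=2^{2n(k)s}$ for every $k\in\mathbb{Z}^d$, this commutation gives $\tilde G(\boldsymbol{k})=\tilde G(\boldsymbol{\ell})$ on $\mathrm{supp}(L_\leq)$, while a direct computation yields $|g_k-\tilde g_k|\lesssim_s|k|^{2s}\mathbbm{1}_{|k|\geq N}\lesssim_s N^{-\eta}\langle k\rangle^{2s+\eta}$ via $\mathbbm{1}_{|k|\geq N}\leq(|k|/N)^\eta$. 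Telescoping $G-G'=(G-\tilde G)-(G'-\tilde G')$ then yields $|G(\boldsymbol{k})-G(\boldsymbol{\ell})|\lesssim_s N^{-\eta}\sum_j(\langle\boldsymbol{k}_j\rangle^{2s+\eta}+\langle\boldsymbol{\ell}_j\rangle^{2s+\eta})$. For $L_>$, by symmetry I would decompose into $2q$ pieces according to which index $j^{*}$ carries a frequency exceeding $2qN$ and transfer $N^{-\eta}$ onto the $\ell^1_\eta$ norm via $\mathbbm{1}_{|k|>2qN}\leq\langle k\rangle^\eta(2qN)^{-\eta}$, while bounding $|G(\boldsymbol{k})-G(\boldsymbol{\ell})|$ crudely by $\lesssim_s\sum_j(\langle\boldsymbol{k}_j\rangle^{2s}+\langle\boldsymbol{\ell}_j\rangle^{2s})$.

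In both cases the remaining task is a tame multilinear estimate of the form
$$\sum_{\boldsymbol{k},\boldsymbol{\ell}}\langle\boldsymbol{k}_1\rangle^{2s+\eta}\,|L_{\boldsymbol{k},\boldsymbol{\ell}}|\,|u_{\boldsymbol{k}_1}|\cdots|u_{\boldsymbol{\ell}_q}| \;\lesssim_s\; q^{2s}\|L\|_{\ell^\infty}\|u\|_{\ell^1_\eta}\|u\|_{H^s}^2\|u\|_{\ell^1}^{2q-3},$$
which I would prove in the spirit of Lemma~\ref{lem:tech_est}: factoring $\langle\boldsymbol{k}_1\rangle^{2s+\eta}=\langle\boldsymbol{k}_1\rangle^\eta\cdot\langle\boldsymbol{k}_1\rangle^s\cdot\langle\boldsymbol{k}_1\rangle^s$, dispatching each factor to one of the other $2q-1$ indices via the zero-momentum condition together with the Jensen-type bound $\langle\boldsymbol{k}_1\rangle^\alpha\lesssim q^{(\alpha-1)_+}\sum_{i\neq 1}\langle\cdot_i\rangle^\alpha$, pairing the $\eta$-weight with a $|u|$-factor to form an $\ell^1_\eta$ factor and the two $s$-weights with two distinct $|u|$-factors to form $\ell^2$ factors interpreted as $\|u\|_{H^s}$, and finally applying Young's convolution inequality ($2$ factors in $\ell^2$, $1$ in $\ell^1_\eta$, $2q-3$ in $\ell^1$; the exponents sum to $2q-1$ as required). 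Collecting the $2q$ choices of distinguished index and the Jensen prefactors yields the announced $q^{2s+1}$.

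The main obstacle is handling the ``diagonal'' dispatches where two or three of the Jensen weights collapse onto a single index, producing a factor of the form $\langle\cdot\rangle^{2s}|u_\cdot|$ or $\langle\cdot\rangle^{s+\eta}|u_\cdot|$ that does not directly match one of the target norms. I would resolve this by pairing such a factor via Cauchy--Schwarz with another $|u|$-factor (turning $\langle\cdot\rangle^{2s}|u_\cdot|\cdot|u_{\cdot'}|$ into $\|u\|_{H^s}$ times a manageable remainder) and by exploiting the trivial comparison $\|u\|_{\ell^1}\leq\|u\|_{\ell^1_\eta}$ to absorb the missing $\eta$-weight when needed; the diagonal configurations are combinatorially fewer than the generic ones, so they only affect the bookkeeping at the level of sub-leading powers of~$q$.
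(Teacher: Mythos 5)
Your proof takes a genuinely different decomposition from the paper — you split $L=L_\leq + L_>$ according to $\mu_1:=\max_j(|\boldsymbol{k}_j|,|\boldsymbol{\ell}_j|)\gtrless 2qN$, whereas the paper splits at the \emph{second-largest} index $\mu_2\gtrless N$ (see Lemma~\ref{lem:outza}), and the paper's low part then Poisson-commutes with $\mathcal{N}_{N,s}$ \emph{exactly}, while yours only almost-commutes. The block-constant extension $\tilde g$ and the telescoping are a nice way to quantify the almost-commutation; the $L_\leq$ part of your plan is sound in spirit. But there is a genuine gap in the reduction step.

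The tame multilinear estimate you reduce to,
\[
\sum_{\boldsymbol{k},\boldsymbol{\ell},\mathrm{ZM}}\langle\boldsymbol{k}_1\rangle^{2s+\eta}|L_{\boldsymbol{k},\boldsymbol{\ell}}|\,|u_{\boldsymbol{k}_1}|\cdots|u_{\boldsymbol{\ell}_q}|\ \lesssim_s\ q^{2s}\|L\|_{\ell^\infty}\|u\|_{\ell^1_\eta}\|u\|_{H^s}^2\|u\|_{\ell^1}^{2q-3},
\]
is \textbf{false} if proved only from the zero-momentum condition via Jensen and Young, as you propose. A counterexample with $d=1$, $q=2$: take $u_k=\mathbbm{1}_{|k|\leq 1}+M^{-s}\mathbbm{1}_{|k|=M}$. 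Then $\|u\|_{\ell^1_\eta},\|u\|_{H^s},\|u\|_{\ell^1}$ are all $O(1)$ as $M\to\infty$ (for $s>\eta$), while the single tuple $(\boldsymbol{k},\boldsymbol{\ell})=((M,-M),(0,0))$ contributes $\langle M\rangle^{2s+\eta}\cdot M^{-2s}\sim M^\eta\to\infty$. Your ``diagonal'' fix cannot salvage this: after placing the $\eta$-weight on the largest frequency and using Jensen to dispatch $\langle\boldsymbol{k}_1\rangle^{2s}$, the two $s$-weights land on the second-largest index, and pairing it by Cauchy--Schwarz with any other index either sends the $s$-weight back to the largest (giving the uncontrolled factor $\langle\boldsymbol{k}_1\rangle^{s+\eta}|u_{\boldsymbol{k}_1}|$) or onto a small index (no gain). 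The comparison $\|u\|_{\ell^1}\leq\|u\|_{\ell^1_\eta}$ goes the wrong way to absorb anything here.

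The missing ingredient is precisely the \emph{resonance constraint} $|\Omega(\boldsymbol{k},\boldsymbol{\ell})|\leq\nu\leq 1$ inside the multilinear estimate. In the paper's proof, when the two largest frequencies lie on the same side ($\Sigma_1$), this inequality together with $\omega_k=|k|^2+O(1)$ gives $\mu_1^2+\mu_2^2\lesssim q\,\langle\mu_3\rangle^2$, so the weight $|g_{\boldsymbol{k}_1}+\cdots - g_{\boldsymbol{\ell}_q}|$ can be bounded by $q^{s+1}\langle\mu_3\rangle^{2s}$ rather than $\langle\mu_1\rangle^{2s}$, and the three weight factors $|\boldsymbol{h}_1|^\eta$, $|\boldsymbol{h}_2|^s$, $\langle\mu_3\rangle^s$ then land on three \emph{distinct} indices. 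When the two largest are on opposite sides ($\Sigma_2$), the paper uses the mean value theorem together with zero momentum to produce the factor $\mu_3^\eta$ on a third index. In both cases it is this resonance-induced structure, which your reduction discards by bounding $|G(\boldsymbol{k})-G(\boldsymbol{\ell})|$ pointwise by $\sum_j\langle\cdot_j\rangle^{2s}$ before summing, that makes the estimate close. (The counterexample above has $\Omega\sim M^2\gg\nu$, so it is never in the support of $L$; but your argument does not see this, because you throw away the constraint before the Young/Jensen step.) You would need to retain $|\Omega|\leq\nu$ throughout and run a case analysis as in the paper's $\Sigma_1/\Sigma_2$, or find another mechanism that rules out two comparably large frequencies sitting on the same side with a small third.
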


The rest of this subsection is devoted to the proof of this proposition. In particular, from now we assume that $V \in \mathcal{V}$ and $L$ are fixed and that $\nu$, $q$ and $N$ satisfy the estimate \eqref{eq:une_CFL}. As usual, in order to prove our multi-linear estimates, we introduce the functions $\mu_1,\cdots,\mu_{2q} : (\mathbb{Z}^d)^{2q} \to \mathbb{R}_+$ such that for all $\boldsymbol{h} \in  (\mathbb{Z}^d)^{2q}$ and $j \in  \llbracket 1,2q\rrbracket$,  $\mu_j (\boldsymbol{h}) $ is the $j^{est}$ largest number among  $|\boldsymbol{h}_{1} |,\cdots, |\boldsymbol{h}_{2q} |$. In other words, $(\mu_j (\boldsymbol{h}))_j$ is the nondecreasing sequence which is equal to $(|\boldsymbol{h}_{j} |)_j$ up to a permutation\footnote{i.e. $\exists \sigma \in  \mathfrak{S}_{2q}, \forall j \in \llbracket 1 ,2q \rrbracket, \quad \mu_j (\boldsymbol{h}) = |\boldsymbol{h}_{\sigma_j} |$.}.

\begin{lemma}
\label{lem:outza}
If $\boldsymbol{k},\boldsymbol{\ell}\in (\mathbb{Z}^d)^q$ satisfy
$$
|\Omega(\boldsymbol{k},\boldsymbol{\ell})|< \nu \quad \mathrm{and} \quad \boldsymbol{k}_1+ \cdots + \boldsymbol{k}_q = \boldsymbol{\ell}_1+ \cdots + \boldsymbol{\ell}_q,
 $$
 then either $\mu_2(\boldsymbol{k},\boldsymbol{\ell}) \geq N$ or $(\boldsymbol{k}, \boldsymbol{\ell})$ does not satisfy  \eqref{eq:monomials_to_remove}.
\end{lemma}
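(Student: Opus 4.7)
The plan is to argue by contrapositive: assume that $\mu_2(\boldsymbol{k},\boldsymbol{\ell}) < N$ and that $(\boldsymbol{k},\boldsymbol{\ell})$ does satisfy the non-resonance condition \eqref{eq:monomials_to_remove}, and derive the lower bound $|\Omega(\boldsymbol{k},\boldsymbol{\ell})| \geq \nu$, contradicting the hypothesis $|\Omega(\boldsymbol{k},\boldsymbol{\ell})| < \nu$. The idea is that zero momentum forces the largest index $\mu_1$ to be comparable to $\mu_2$ (up to a factor of $2q$), after which the small-divisor estimate built into the definition of $\mathcal{V}$ combined with the threshold \eqref{eq:une_CFL} finishes the job.

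First I would exploit the zero momentum relation $\boldsymbol{k}_1 + \cdots + \boldsymbol{k}_q = \boldsymbol{\ell}_1 + \cdots + \boldsymbol{\ell}_q$ to control $\mu_1$. If the maximum is attained at, say, $\boldsymbol{k}_{i_0}$, then $\boldsymbol{k}_{i_0}$ equals a signed sum of the remaining $2q-1$ indices, each of modulus at most $\mu_2(\boldsymbol{k},\boldsymbol{\ell})$. The triangle inequality gives
$$
\mu_1(\boldsymbol{k},\boldsymbol{\ell}) \leq (2q-1)\, \mu_2(\boldsymbol{k},\boldsymbol{\ell}) < (2q-1)\, N < 2qN.
$$

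Next, since $V \in \mathcal{V}$ and $(\boldsymbol{k},\boldsymbol{\ell})$ is assumed to satisfy \eqref{eq:monomials_to_remove}, Definition \ref{def:setV} yields
$$
|\Omega(\boldsymbol{k},\boldsymbol{\ell})| \;\geq\; \gamma\, q^{-4} \bigl(\log_2 \mu_1(\boldsymbol{k},\boldsymbol{\ell})\bigr)^{-(2q+1)}.
$$
Since \eqref{eq:monomials_to_remove} forces at least one index to lie outside $B_0$, we have $\mu_1 \geq 2$, so $\log_2 \mu_1 \geq 1$ and the right-hand side is well-defined. The map $x \mapsto (\log_2 x)^{-(2q+1)}$ is decreasing on $(1,\infty)$, hence $\mu_1 < 2qN$ combined with \eqref{eq:une_CFL} gives
$$
|\Omega(\boldsymbol{k},\boldsymbol{\ell})| \;\geq\; \gamma\, q^{-4} \bigl(\log_2(2qN)\bigr)^{-(2q+1)} \;\geq\; \nu,
$$
contradicting $|\Omega(\boldsymbol{k},\boldsymbol{\ell})| < \nu$.

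I do not foresee any real obstacle: the argument is essentially a direct chaining of zero-momentum $\Rightarrow$ $\mu_1 \lesssim q\,\mu_2$, the log-type small divisor estimate packaged in $\mathcal{V}$, and the CFL-like threshold \eqref{eq:une_CFL}. The only subtlety worth flagging is the handling of tiny $\mu_1$, which is harmless because \eqref{eq:monomials_to_remove} cannot be satisfied when all indices sit in $B_0$ (the counts in $B_n$ would be $q,q$ for $n=0$ and $0,0$ for $n\geq 1$, matching on every block).
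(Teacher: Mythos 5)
Your proof is correct and follows essentially the same route as the paper: zero momentum gives $\mu_1 \leq (2q-1)\mu_2$, the small-divisor estimate from Definition \ref{def:setV} bounds $|\Omega(\boldsymbol{k},\boldsymbol{\ell})|$ from below in terms of $\log_2 \mu_1$, and the threshold \eqref{eq:une_CFL} closes the argument. The only superficial difference is that you argue by contrapositive (assuming $\mu_2<N$ and deriving $|\Omega|\geq\nu$), whereas the paper directly chains the inequalities $\gamma q^{-4}(\log_2\mu_1)^{-(2q+1)}\leq|\Omega|<\nu\leq\gamma q^{-4}(\log_2(2qN))^{-(2q+1)}$ to conclude $\mu_1\geq 2qN$ and hence $\mu_2>N$; your explicit check that $\mu_1\geq 2$ when \eqref{eq:monomials_to_remove} holds is a worthwhile detail the paper leaves implicit.
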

\begin{proof}
We assume that $(\boldsymbol{k}, \boldsymbol{\ell})$ satisfies  \eqref{eq:monomials_to_remove}. Since $V\in \mathcal{V}$ is non-resonant and $\nu$, $q$ and $N$ satisfy the estimate \eqref{eq:une_CFL}, we have
$$
 \gamma q^{-4} \big(\log_2 \mu_1 (\boldsymbol{k}, \boldsymbol{\ell})\big)^{-(2q+1)} \leq \Omega(\boldsymbol{k},\boldsymbol{\ell}) \leq \nu \leq  \gamma q^{-4} \big(\log_2 \big( 2qN \big) \big)^{-(2q+1)}.
$$
As a consequence, we deduce that $\mu_1 (\boldsymbol{k}, \boldsymbol{\ell}) \geq 2q N$. Moreover, since $(\boldsymbol{k}, \boldsymbol{\ell})$ satisfies the zero momentum condition 
 $\boldsymbol{k}_1+ \cdots + \boldsymbol{k}_q = \boldsymbol{\ell}_1+ \cdots + \boldsymbol{\ell}_q$, we have $(2q-1)\mu_2 (\boldsymbol{k}, \boldsymbol{\ell}) \geq \mu_1(\boldsymbol{k}, \boldsymbol{\ell}) $ and so, finally we deduce that $\mu_2 (\boldsymbol{k}, \boldsymbol{\ell}) > N$.
\end{proof}

Now, we decompose $L$ in two parts $L= L^{(low)}+L^{(high)}$, where $L^{(low)},L^{(high)}\in \mathscr{H}_{2q}$ are two homogeneous $\nu$-resonant polynomials of degree $2q$ defined by
$$
L^{(low)}_{\boldsymbol{k},\boldsymbol{\ell}} = \left\{ \begin{array}{lll} L_{\boldsymbol{k},\boldsymbol{\ell}} & \mathrm{if} & \mu_2(\boldsymbol{k},\boldsymbol{\ell}) < N \\  0 & \mathrm{else} \end{array} \right. \quad \mathrm{and} \quad L^{(high)}_{\boldsymbol{k},\boldsymbol{\ell}} = \left\{ \begin{array}{lll} 0 & \mathrm{if} & \mu_2(\boldsymbol{k},\boldsymbol{\ell}) < N \\  L_{\boldsymbol{k},\boldsymbol{\ell}} & \mathrm{else} \end{array} \right. .
$$
As a consequence of Lemma \ref{lem:outza}, we prove in the following lemma that $L^{(low)}$ commutes with $\mathcal{N}_{N,s}$.
\begin{lemma}\label{low} The Hamiltonians $L^{(low)}$ and $\mathcal{N}_{N,s}$ commute (i.e. $\{L^{(low)},\mathcal{N}_{N,s}\}= 0$).
\end{lemma}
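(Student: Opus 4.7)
The plan is to apply Proposition \ref{prop:Poisson_vs_Z} with $Z=\mathcal{N}_{N,s}$. Observe that $\mathcal{N}_{N,s}$ is of the required quadratic form $\sum_k g_k |u_k|^2$ with
\[
g_k = (2^n)^{2s} \text{ if } k\in B_n \text{ with } n<n_{\max}, \qquad g_k=|k|^{2s} \text{ if } |k|\geq N,
\]
which satisfies $|g_k|\lesssim \langle k\rangle^{2s}$. Once this is granted, the proposition yields
\[
\{L^{(low)},\mathcal{N}_{N,s}\}(u)=-2i\sum_{\boldsymbol{k},\boldsymbol{\ell}\in(\mathbb{Z}^d)^q}\Bigl(\sum_j g_{\boldsymbol{k}_j}-\sum_j g_{\boldsymbol{\ell}_j}\Bigr)L^{(low)}_{\boldsymbol{k},\boldsymbol{\ell}}\,u_{\boldsymbol{k}_1}\cdots\overline{u_{\boldsymbol{\ell}_q}},
\]
and it suffices to show the scalar coefficient vanishes whenever $L^{(low)}_{\boldsymbol{k},\boldsymbol{\ell}}\neq 0$.

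Fix such a $(\boldsymbol{k},\boldsymbol{\ell})$. The $\nu$-resonance of $L$ gives $|\Omega(\boldsymbol{k},\boldsymbol{\ell})|<\nu$, the zero momentum condition built into $\mathscr{H}_{2q}$ gives $\boldsymbol{k}_1+\cdots+\boldsymbol{k}_q=\boldsymbol{\ell}_1+\cdots+\boldsymbol{\ell}_q$, and the definition of $L^{(low)}$ gives $\mu_2(\boldsymbol{k},\boldsymbol{\ell})<N$. Lemma \ref{lem:outza} then forces $(\boldsymbol{k},\boldsymbol{\ell})$ to fail \eqref{eq:monomials_to_remove}, i.e.
\[
\sharp\{j : \boldsymbol{k}_j\in B_n\}=\sharp\{j : \boldsymbol{\ell}_j\in B_n\} \quad \text{for every } n\in\mathbb{N}.
\]

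The key remaining observation is that in fact $\mu_1(\boldsymbol{k},\boldsymbol{\ell})<N$ as well. Indeed, if $\mu_1\geq N$, the assumption $\mu_2<N$ means that exactly one index among $\boldsymbol{k}_1,\dots,\boldsymbol{k}_q,\boldsymbol{\ell}_1,\dots,\boldsymbol{\ell}_q$ has norm $\geq N$; without loss of generality it is some $\boldsymbol{k}_{j_0}$, lying in $B_{n_0}$ with $n_0\geq n_{\max}$. But then $\sharp\{j : \boldsymbol{k}_j\in B_{n_0}\}\geq 1$ while $\sharp\{j : \boldsymbol{\ell}_j\in B_{n_0}\}=0$, contradicting the block balance just established. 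Consequently every index lies in some block $B_n$ with $n<n_{\max}$, so $g_{\boldsymbol{k}_j}=(2^{n(\boldsymbol{k}_j)})^{2s}$ (and likewise for $\boldsymbol{\ell}_j$). Regrouping,
\[
\sum_j g_{\boldsymbol{k}_j}-\sum_j g_{\boldsymbol{\ell}_j}=\sum_{n<n_{\max}} (2^n)^{2s}\bigl(\sharp\{j : \boldsymbol{k}_j\in B_n\}-\sharp\{j : \boldsymbol{\ell}_j\in B_n\}\bigr)=0,
\]
and the lemma follows.

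The only genuine obstacle is establishing $\mu_1<N$: without it, the high-frequency weights $|k|^{2s}$ are not constant on blocks and the telescoping in the last display would fail. Fortunately, the zero momentum condition combined with $\mu_2<N$ rules out an isolated high-frequency index, which is exactly the content of the contradiction argument above.
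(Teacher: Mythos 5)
Your proof is correct and follows essentially the same route as the paper: apply Proposition \ref{prop:Poisson_vs_Z} to $\mathcal{N}_{N,s}$, invoke Lemma \ref{lem:outza} to deduce that the surviving indices fail \eqref{eq:monomials_to_remove}, upgrade $\mu_2<N$ to $\mu_1<N$ using the block balance and the fact that $N=2^{n_{\max}}$ is a block boundary, and telescope. The only difference is that you spell out the contradiction argument behind the step $\mu_1(\boldsymbol{k},\boldsymbol{\ell})<N$, which the paper asserts in one line (``Therefore, since $N$ is of the form $N=2^{n_{\max}}$, all the indices also satisfy $\mu_1(\boldsymbol{k},\boldsymbol{\ell}) < N$''); your version is a helpful expansion of that step rather than a different argument.
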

\begin{proof}
First, we note that as a consequence of Proposition \ref{prop:Poisson_vs_Z}, for all $u\in H^s(\mathbb{T}^d)$, we have 
\begin{equation}
\label{eq:M2}
\{L^{(low)},\mathcal{N}_{N,s}\}(u) = -2i \! \! \! \sum_{\substack{\boldsymbol{k},\boldsymbol{\ell} \in (\mathbb{Z}^d)^q\\  \mu_2(\boldsymbol{k},\boldsymbol{\ell}) < N  }} \! \! \! ( g_{\boldsymbol{k}_1}+ \cdots+g_{\boldsymbol{k}_q} - g_{\boldsymbol{\ell}_1} - \cdots - g_{\boldsymbol{\ell}_q}   ) L_{\boldsymbol{k},\boldsymbol{\ell} } u_{\boldsymbol{k}_1} \dots u_{\boldsymbol{k}_q} \overline{u_{\boldsymbol{\ell}_1}} \dots \overline{u_{\boldsymbol{\ell}_q}}
\end{equation}
where $g_k =|k|^{2s}$ if $k\geq N$ and $g_k = (2^n)^{2s}$ if $k\in B_n$ with $n<n_{\max}$. Moreover, since $L$ is $\nu$-resonant, as a consequence of Lemma \ref{lem:outza}, if $L_{\boldsymbol{k},\boldsymbol{\ell} }\neq 0$ and $\mu_2(\boldsymbol{k},\boldsymbol{\ell}) < N $ then $(\boldsymbol{k}, \boldsymbol{\ell})$ does not satisfy  \eqref{eq:monomials_to_remove}. In other words, the sum in \eqref{eq:M2} can be restricted to the indices such that $\mu_2(\boldsymbol{k},\boldsymbol{\ell}) < N $ and $(\boldsymbol{k}, \boldsymbol{\ell})$ does not satisfy  \eqref{eq:monomials_to_remove}. Therefore, since $N$ is of the form $N=2^{n_{\max}}$, all the indices also satisfy $\mu_1(\boldsymbol{k},\boldsymbol{\ell}) < N$ and so 
$$
g_{\boldsymbol{k}_1}+ \cdots+g_{\boldsymbol{k}_q} - g_{\boldsymbol{\ell}_1} - \cdots - g_{\boldsymbol{\ell}_q} = (2^{\boldsymbol{m}_1})^{2s}+ \cdots+ (2^{\boldsymbol{m}_q})^{2s} -(2^{\boldsymbol{n}_1})^{2s}- \cdots - (2^{\boldsymbol{n}_q})^{2s}
$$
where $\boldsymbol{m}_j$ (resp. $\boldsymbol{n}_j$) is the index such that $\boldsymbol{k}_j \in B_{\boldsymbol{m}_j}$ (resp. $\boldsymbol{\ell}_j \in B_{\boldsymbol{n}_j}$). But since here we only consider indices such that $(\boldsymbol{k}, \boldsymbol{\ell})$ does not satisfy  \eqref{eq:monomials_to_remove}, $\boldsymbol{m}$ and $\boldsymbol{n}$ are equal up to a permutation (i.e. $\boldsymbol{m} \in \mathfrak{S}_q \boldsymbol{n}$) and so this sum is trivial: $g_{\boldsymbol{k}_1}+ \cdots+g_{\boldsymbol{k}_q} - g_{\boldsymbol{\ell}_1} - \cdots - g_{\boldsymbol{\ell}_q} = 0$. Therefore, as a consequence of \eqref{eq:M2}, we have proven that $\{L^{(low)},\mathcal{N}_{N,s}\}(u) =0$.
\end{proof}

We are now in position to prove Proposition \ref{prop:maintech}.
\begin{proof}[Proof of Proposition \ref{prop:maintech}] In view of Lemma \ref{low} it remains to prove, under the hypothesis of the proposition, that 
\begin{equation}\label{est-high}
|\{ \mathcal{N}_{N,s}, L^{(high)} \}(u)| \lesssim_s q^{2s+1}  N^{-\eta} \| L \|_{\ell^\infty}\| u\|_{\ell^1_\eta} \| u\|_{\ell^1}^{2q-3} \| u\|_{H^s}^{2}.
\end{equation}
Following the notations introduced for \eqref{eq:M2} we get
\begin{align*}
\big|\{L^{(high)},\mathcal{N}_{N,s}\}(u)\big| &= 2 \big| \! \! \! \sum_{\substack{\boldsymbol{k},\boldsymbol{\ell} \in (\mathbb{Z}^d)^q\\  \mu_2(\boldsymbol{k},\boldsymbol{\ell}) \geq N  }} \! \! \! ( g_{\boldsymbol{k}_1}+ \cdots+g_{\boldsymbol{k}_q} - g_{\boldsymbol{\ell}_1} - \cdots - g_{\boldsymbol{\ell}_q}   ) L_{\boldsymbol{k},\boldsymbol{\ell} } u_{\boldsymbol{k}_1} \dots u_{\boldsymbol{k}_q} \overline{u_{\boldsymbol{\ell}_1}} \dots \overline{u_{\boldsymbol{\ell}_q}}\big|\\
&\leq 2 \| L \|_{\ell^\infty} \! \! \! \! \! \! \! \! \! \! \! \! \sum_{\substack{\boldsymbol{k},\boldsymbol{\ell} \in (\mathbb{Z}^d)^q\\  \mu_2(\boldsymbol{k},\boldsymbol{\ell}) \geq N, \ \Omega(\boldsymbol{k},\boldsymbol{\ell})\leq \nu  \\ \boldsymbol{k}_1+ \cdots + \boldsymbol{k}_q = \boldsymbol{\ell}_1+ \cdots + \boldsymbol{\ell}_q}} \! \! \! \! \! \! \! \! \! \! \! \! \big| g_{\boldsymbol{k}_1}+ \cdots+g_{\boldsymbol{k}_q} - g_{\boldsymbol{\ell}_1} - \cdots - g_{\boldsymbol{\ell}_q}   \big| |u_{\boldsymbol{k}_1} \dots u_{\boldsymbol{k}_q} \overline{u_{\boldsymbol{\ell}_1}} \dots \overline{u_{\boldsymbol{\ell}_q}}|.
\end{align*}
First we order the first two indices of   $(\boldsymbol{k},\boldsymbol{\ell})$ in such a way that $\mu_1(\boldsymbol{k},\boldsymbol{\ell})=|\boldsymbol{k}_1|$ and $\mu_2(\boldsymbol{k},\boldsymbol{\ell})=|\boldsymbol{k}_2|$ or $\mu_2(\boldsymbol{k},\boldsymbol{\ell})=|\boldsymbol{\ell}_1|$:

$$\sum_{\substack{\boldsymbol{k},\boldsymbol{\ell} \in (\mathbb{Z}^d)^q, \ \Omega(\boldsymbol{k},\boldsymbol{\ell})\leq \nu\\  \mu_2(\boldsymbol{k},\boldsymbol{\ell}) \geq N\\ \boldsymbol{k}_1+ \cdots + \boldsymbol{k}_q = \boldsymbol{\ell}_1+ \cdots + \boldsymbol{\ell}_q  }} \! \! \!\big| g_{\boldsymbol{k}_1}+ \cdots+g_{\boldsymbol{k}_q} - g_{\boldsymbol{\ell}_1} - \cdots - g_{\boldsymbol{\ell}_q}   \big| |u_{\boldsymbol{k}_1} \dots u_{\boldsymbol{k}_q} \overline{u_{\boldsymbol{\ell}_1}} \dots \overline{u_{\boldsymbol{\ell}_q}}|\leq (2q)^2\big( \Sigma_1+\Sigma_2 \big)$$
where
$$ \Sigma_1= \sum_{\substack{\boldsymbol{k},\boldsymbol{\ell} \in (\mathbb{Z}^d)^q, \ \Omega(\boldsymbol{k},\boldsymbol{\ell})\leq \nu\\ \mu_1(\boldsymbol{k},\boldsymbol{\ell})=|\boldsymbol{k}_1|\geq \mu_2(\boldsymbol{k},\boldsymbol{\ell})=|\boldsymbol{k}_2| \geq N \\ \boldsymbol{k}_1+ \cdots + \boldsymbol{k}_q = \boldsymbol{\ell}_1+ \cdots + \boldsymbol{\ell}_q  }} \! \! \! 
\big| g_{\boldsymbol{k}_1}+ \cdots+g_{\boldsymbol{k}_q} - g_{\boldsymbol{\ell}_1} - \cdots - g_{\boldsymbol{\ell}_q}   \big| |u_{\boldsymbol{k}_1} \dots u_{\boldsymbol{k}_q} \overline{u_{\boldsymbol{\ell}_1}} \dots \overline{u_{\boldsymbol{\ell}_q}}|$$
and
$$\Sigma_2=\sum_{\substack{\boldsymbol{k},\boldsymbol{\ell} \in (\mathbb{Z}^d)^q\\   \mu_1(\boldsymbol{k},\boldsymbol{\ell})=|\boldsymbol{k}_1|\geq \mu_2(\boldsymbol{k},\boldsymbol{\ell})=|\boldsymbol{\ell}_1| \geq N \\ \boldsymbol{k}_1+ \cdots + \boldsymbol{k}_q = \boldsymbol{\ell}_1+ \cdots + \boldsymbol{\ell}_q   }} \! \! \!
\big| g_{\boldsymbol{k}_1}+ \cdots+g_{\boldsymbol{k}_q} - g_{\boldsymbol{\ell}_1} - \cdots - g_{\boldsymbol{\ell}_q}   \big| |u_{\boldsymbol{k}_1} \dots u_{\boldsymbol{k}_q} \overline{u_{\boldsymbol{\ell}_1}} \dots \overline{u_{\boldsymbol{\ell}_q}}| .$$
We begin by estimating $\Sigma_1$. Using $\Omega(\boldsymbol{k},\boldsymbol{\ell})\leq \nu\leq1$, we get 
$$
|\boldsymbol{k}_1|^2+|\boldsymbol{k}_2|^2\leq (2q-2)|\mu_3(\boldsymbol{k},\boldsymbol{\ell})|^2 +2q (2\pi)^{-d/2} \| V\|_{\ell^\infty} \leq 2q (\| V\|_{\ell^\infty}+1) \langle\mu_3(\boldsymbol{k},\boldsymbol{\ell})\rangle^2.
$$
  Hence, since $ g_{k}\leq|k|^{2s}$ for  any integer $k$, 
$$
\big| g_{\boldsymbol{k}_1}+ \cdots+g_{\boldsymbol{k}_q} - g_{\boldsymbol{\ell}_1} - \cdots - g_{\boldsymbol{\ell}_q}   \big|\leq (2(2q)^s(\| V\|_{\ell^\infty}+1)^s+q-2) \langle\mu_3(\boldsymbol{k},\boldsymbol{\ell})\rangle^{2s} \lesssim_s q^{s+1} \langle \mu_3(\boldsymbol{k},\boldsymbol{\ell})\rangle^{2s}.
$$
and thus, setting $v^{(j)}_h = |u_h|$ if $h\leq q$ and  $v^{(j)}_h = |u_{-h}|$ else, by Young we have
\begin{align*} \Sigma_1&\lesssim_s q^{s+1}  \! \! \! \! \! \! \! \! \! \! \! \!  \! \! \! \! \! \!  \sum_{\substack{\boldsymbol{k},\boldsymbol{\ell} \in (\mathbb{Z}^d)^q\\\mu_1(\boldsymbol{k},\boldsymbol{\ell})=|\boldsymbol{k}_1|\geq \mu_2(\boldsymbol{k},\boldsymbol{\ell})=|\boldsymbol{k}_2| \geq N \\ \boldsymbol{k}_1+ \cdots + \boldsymbol{k}_q = \boldsymbol{\ell}_1+ \cdots + \boldsymbol{\ell}_q  }} \! \! \!  \! \! \! \! \! \! \! \! \! \!  \langle\mu_3(\boldsymbol{k},\boldsymbol{\ell})\rangle^{2s} |u_{\boldsymbol{k}_1} \dots u_{\boldsymbol{k}_q} \overline{u_{\boldsymbol{\ell}_1}} \dots \overline{u_{\boldsymbol{\ell}_q}}|
= q^{s+1} \! \! \! \! \! \! \! \! \! \! \! \! \! \! \! \sum_{ \substack{ \boldsymbol{h} \in  (\mathbb{Z}^d)^{2q}  \\ \mu_1(\boldsymbol{h})=|\boldsymbol{h}_1|\geq \mu_2(\boldsymbol{h})=|\boldsymbol{h}_2| \geq N \\ \boldsymbol{h}_1 + \cdots+ \boldsymbol{h}_{2q} =0 } } \! \! \! \! \! \! \! \! \! \! \! \! \! \! \! \langle\mu_3( \boldsymbol{h}  )\rangle^{2s}  v^{(1)}_{\boldsymbol{h}_1} \cdots v^{(2q)}_{\boldsymbol{h}_{2q}} \\
&\leq q^{s+1} N^{-\eta}\sum_{j=3}^{2q} \! \! \! \! \! \! \! \! \! \! \! \! \sum_{ \substack{ \boldsymbol{h} \in  (\mathbb{Z}^d)^{2q}  \\ \mu_1(\boldsymbol{h})=|\boldsymbol{h}_1|\geq \mu_2(\boldsymbol{h})=|\boldsymbol{h}_2| \geq N \\ \mu_3(\boldsymbol{h})=|\boldsymbol{h}_j| \\ \boldsymbol{h}_1 + \cdots+ \boldsymbol{h}_{2q} =0 } } \! \! \! \! \! \! \! \! \! \! \! \!|\boldsymbol{h}_1|^\eta |\boldsymbol{h}_2|^s  |\boldsymbol{h}_j|^s  v^{(1)}_{\boldsymbol{h}_1} \cdots v^{(2q)}_{\boldsymbol{h}_{2q}}
\leq 2q^{s+2} N^{-\eta} \|u\|_{\ell^1_\eta} \| u\|_{\ell^1}^{2q-3}\|u\|_{H^s}^2.
\end{align*}
Now we estimate $\Sigma_2$. We note that, in the sum $\Sigma_2$, 
\begin{align*}\big| g_{\boldsymbol{k}_1}+ \cdots+g_{\boldsymbol{k}_q} - g_{\boldsymbol{\ell}_1} - \cdots - g_{\boldsymbol{\ell}_q}   \big| &\leq |\boldsymbol k_1|^{2s}-|\boldsymbol\ell_1|^{2s}+(2q-2) \mu_3(\boldsymbol{k},\boldsymbol{\ell})^{2s}.
\end{align*}
On the other hand,  by the mean value theorem, $|\boldsymbol k_1|^{2s}-|\boldsymbol\ell_1|^{2s}\leq 2s|\boldsymbol k_1-\boldsymbol\ell_1||\boldsymbol k_1|^{2s-1}$ and, using the zero momentum condition we have, $|\boldsymbol k_1|\leq 2q |\boldsymbol\ell_1|$ and $|\boldsymbol k_1-\boldsymbol\ell_1|\leq 2q  \mu_3(\boldsymbol{k},\boldsymbol{\ell})$. Therefore, since $0< \eta \leq 1$, we get
$$\big| g_{\boldsymbol{k}_1}+ \cdots+g_{\boldsymbol{k}_q} - g_{\boldsymbol{\ell}_1} - \cdots - g_{\boldsymbol{\ell}_q}   \big|\leq 4s (2q)^{2s}|\boldsymbol k_1|^{s}|\boldsymbol\ell_1|^{s-1} \mu_3(\boldsymbol{k},\boldsymbol{\ell})
\lesssim q^{2s}|\boldsymbol k_1|^{s}|\boldsymbol\ell_1|^{s-\eta} \mu_3(\boldsymbol{k},\boldsymbol{\ell})^\eta$$
Thus, recalling that in $\Sigma_2$ we have $|\boldsymbol\ell_1|\geq N$, as previously we get
$$ \Sigma_2 \lesssim_s q^{2s} N^{-\eta} \!   \! \! \! \sum_{\substack{2\leq j \leq 2q \\ j\neq q+1}} \sum_{\substack{\boldsymbol{h} \in (\mathbb{Z}^d)^{2q} \\ \boldsymbol{h}_1+ \cdots + \boldsymbol{h}_{2q} = 0  }} \! \! \!  \! \! \!   \langle \boldsymbol{h}_1\rangle^s  \langle \boldsymbol{h}_{q+1}\rangle^s \langle \boldsymbol{h}_{j}\rangle^\eta  v^{(1)}_{\boldsymbol{h}_1} \cdots v^{(2q)}_{\boldsymbol{h}_{2q}}  
\leq 2q^{2s+1} N^{-\eta}\|u\|_{\ell^1_\eta} \| u\|_{\ell^1}^{2q-3}\|u\|_{H^s}^2.
$$
\end{proof}

\subsection{Proof of Theorem \ref{th-main}} 

\subsubsection{Approximation by smooth solutions} In order to justify the formal computation, we are going to prove Theorem \ref{th-main} when $u^{(0)}$ is smooth. So first, let us check that this assumption can be done without loss of generality. More precisely, we assume that Theorem \ref{th-main} holds if we add the assumption that $u^{(0)} \in C^\infty(\mathbb{T}^d)$ and we aim at proving that this assumption can be removed. 

Let $u^{(0)} \in H^{s_\star}$, where $s_\star = \max(s,s_0)$, be such that $\varepsilon = \|u^{(0)}\|_{H^{s_0}} \leq \varepsilon_0 $. Let $u^{(0,n)} \in C^\infty$, $n\geq 1$, be a sequence of functions such that 
$$
\sup_{n \geq 1}  \| u^{(0,n)} \|_{H^{s_0}} \leq \| u^{(0)} \|_{H^{s_0}} \quad \mathrm{and} \quad u^{(0,n)} \mathop{\longrightarrow}_{n\to \infty} u^{(0)} \quad \mathrm{in} \quad H^{s_\star}.
$$
Since $\varepsilon \mapsto T_\varepsilon$ is increasing (provided that $\varepsilon$ is small enough), for all $n \geq 1$, the solution $u^{(n)}$ of \eqref{NLS} with initial condition $u^{(0,n)}$ satisfies $u^{(n)}\in C^\infty([-T_\varepsilon,T_\varepsilon] \times  \mathbb{T}^d   ; \mathbb{C})$ and 
$$
M:= \sup_{n\geq 1}\sup_{ |t| \leq T_\varepsilon} \| u^{(n)} \|_{H^{s_\star}} < \infty. 
$$
We are going to prove that $u^{(n)}$ is of Cauchy in $C^0(  [-T_\varepsilon,T_\varepsilon] ;  H^{s_\star}(\mathbb{T}^d)  )$. Indeed, by Duhamel, it satisfies
\begin{equation}
\label{eq:Dudu}
u^{(n)}(t) = e^{i t (\Delta - V\ast)} u^{(n,0)} + \int_{0}^t e^{i (t-\tau) (\Delta - V\ast)}  |u(\tau)|^{2p}u(\tau)  \mathrm{d}\tau
\end{equation}
and so, since $H^{s_\star}$ is an algebra (because $s_\star \geq s_0 >d/2$), we have
$$
\| u^{(n)}(t) - u^{(m)}(t) \|_{H^{s_\star}} \leq \| u^{(0,n)} - u^{(0,m)} \|_{H^{s_\star}} + p C_{s_\star} M^{2p}   \int_{[0;t]} \| u^{(n)}(\tau) - u^{(m)}(\tau) \|_{H^{s_\star}} \mathrm{d}\tau
$$
where $C_{s_\star}>0 $ is a constant depending only on $s_\star$. Therefore as a consequence of Gr\"onwall's inequality, we have
$$
\sup_{|t|\leq T_\varepsilon} \| u^{(n)}(t) - u^{(m)}(t) \|_{H^{s_\star}} \leq \| u^{(0,n)} - u^{(0,m)} \|_{H^{s_\star}}  e^{ p C_{s_\star} M^{2p} T_\varepsilon }
$$
which proves that $u^{(n)}$ is of Cauchy in $C^0(  [-T_\varepsilon,T_\varepsilon] ;  H^{s_\star}(\mathbb{T}^d)  )$. This space being a Banach space, we denote by $u \in C^0(  [-T_\varepsilon,T_\varepsilon] ;  H^{s_\star}(\mathbb{T}^d)  )$ its limit. Passing to the limit in \eqref{eq:Dudu}, we deduce that $u \in C^1(  [-T_\varepsilon,T_\varepsilon] ;  H^{s_\star-2}(\mathbb{T}^d)  )$ is also a solution of \eqref{NLS} on $[-T_\varepsilon, T_\varepsilon]$. Moreover, since $s_\star \geq s$, $\|u^{(n)}\|_{L^\infty H^s}$ goes to $\|u\|_{L^\infty H^s}$ as $n$ goes to $+\infty$, which proves that $u$ also satisfies the bound $\|u\|_{L^\infty H^s} \lesssim_s \| u^{(0)} \|_{H^s}$.

\subsubsection{Setting of the bootstrap} Now we focus more directly on the proof of Theorem \ref{th-main}. We assume that $V \in \mathcal{V}$ is fixed (the set $\mathcal{V}$ being defined in Definition \ref{def:setV}). Thanks to the previous step, from now we assume without loss of generality that $u^{(0)} \in C^\infty(\mathbb{T}^d ; \mathbb{C})$ satisfies $\varepsilon := \| u^{(0)} \|_{H^{s_0}} \leq \varepsilon_0$ where $\varepsilon_0>0$ is a constant depending only\footnote{ and also on $V$ and $d$ but we do not track these dependencies.} on $s_0>d/2$  which will be determined later (see formula \eqref{eq:defeps0} below).

We denote by $u\in C^0((-T_-,T_+);H^{s_0}) \cap C^1((-T_-,T_+);H^{s_0-2})$ the maximal solution of \eqref{NLS} associated with $u^{(0)}$, i.e. $T_+>0$ satisfies 
$$
T_+ = +\infty \quad \mathrm{or} \quad \limsup_{t\to +\infty} \| u \|_{H^{s_0}} = +\infty.
 $$
 Of course $T_-$ is defined similarly. Since by assumption $u^{(0)} \in C^\infty$, for all $s\geq 0$, $u^{(0)} \in H^s$, and thus, since the non-linearity enjoys tame estimates, $u\in C^0((-T_-,T_+);H^{s})$ for all $s\geq 0$. Therefore, since $C^\infty(\mathbb{T}^d) = H^\infty(\mathbb{T}^d)$, it is clear that  
 $$
 u\in C^\infty((-T_-,T_+) \times  \mathbb{T}^d   ; \mathbb{C}) \subset C^1((-T_-,T_+);\ell^1).
 $$
 From now, without loss of generality, we only consider non-negative times. We consider a constant $G_{s_0}>1$ depending only on $s_0$ and that will be determined later (see formula \eqref{eq:defGs0} below). In order to prove that $T_+ > T_\varepsilon$ and that $\| u(t) \|_{H^{s_0}} \leq G_{s_0} \| u^{(0)} \|_{H^{s_0}}$ for all $t\in [0,T_\varepsilon]$, by a standard bootstrap argument, it is enough to prove that 
 \begin{equation}
 \label{eq:the_bootstrap}
  \left. \begin{array}{lll} 0\leq T < \min(T_\varepsilon,T_+) \\ 
  \forall 0\leq t \leq T, \quad \| u(t) \|_{H^{s_0}} \leq G_{s_0} \| u^{(0)} \|_{H^{s_0}} \end{array} \right\} \quad \Rightarrow \quad \| u(T) \|_{H^{s_0}} < G_{s_0} \| u^{(0)} \|_{H^{s_0}}.
 \end{equation}
The estimate of the $H^s$ norms for $s\neq s_0$ will just be a byproduct of the proof (see estimate \eqref{eq:growthHsfinal} below).
 
 \subsubsection{Parameters and change of variable} Following \eqref{eq:the_bootstrap}, from now and until the end of this proof, we consider $T>0$ such that $T < \min(T_\varepsilon,T_+)$ and for all  $t \in  [0, T],  \| u(t) \|_{H^{s_0}} \leq G_{s_0} \| u^{(0)} \|_{H^{s_0}} $.\\ 
We consider the following parameters which will be optimized later with respect to $\varepsilon\equiv \|u^{(0)} \|_{H^{s_0}}$ (see formula \eqref{eq:defN} and \eqref{eq:defr} below) :
\begin{itemize}
\item $N \geq 1$ is integer of the form $N=2^{n_{\max}}$ with $n_{\max} \in \mathbb{N}$,
\item $r\geq 2$ is an integer (it will be the order of the Birkhoff normal form),
\item $\nu>0$ is the size of the small divisors in the Birkhoff normal form. In order to apply Proposition \ref{prop:maintech} and to have small divisors as large as possible, we set
\begin{equation}
\label{eq:defnu}
\nu := \widetilde{\gamma} r^{-4} \big(\log_2 \big( 2r N \big) \big)^{-(2r+1)} 
\end{equation}
where $\widetilde{\gamma} = \min(\gamma,1)$ and $\gamma>0$ is the constant associated with the non-resonance of $V$ (see Definition \ref{def:setV}). We note that by construction we have $\nu<1$.
\end{itemize} 
Since $s_0>d/2$, we set
$$
K_{s_0} := \sqrt{\sum_{k\in \mathbb{Z}^d} \langle k \rangle^{-2s_0}}
$$
and by Cauchy-Schwarz we have $\| \cdot \|_{\ell_1} \leq K_{s_0}\| \cdot \|_{H^{s_0}}$.

\medskip

We recall that $u$ satisfies
 $$
\forall t \in [0,T], \quad i \partial_t u(t) =  \nabla \frac{H}2(u(t))
 $$
where $H$, the Hamiltonian of \eqref{NLS}, is given by \eqref{eq:HamiltofNLS}. Therefore, we apply the Birkoff normal form Theorem \ref{thm:Birk} to the Hamiltonian $H$. In order to apply the changes of variables to $u(t)$, the parameters we are going to design will satisfy the constraint 
\begin{equation}
\label{eq:CFL1}
G_{s_0} K_{s_0} \varepsilon   < \frac{\sqrt{\nu}}{Cr} = \rho.
\end{equation}
Therefore, we have
$$
\forall t \in [0,T], \quad \| u(t) \|_{\ell^1} < \rho 
$$
and so it makes sense to consider
$$
v(t) := \tau^{(0)}(u(t)).
$$
Note that, as a consequence of Theorem \ref{thm:Birk}, we have $\| v(t) \|_{\ell^1} <2\rho$ and 
$$
u(t) = \tau^{(1)}(v(t)).
$$
Moreover, thanks to \eqref{eq:inflationHs}, we have
$$
\forall t\in [0,T],\forall s\geq 0, \quad M_s^{-1} \| v(t) \|_{H^s} \leq  \| u(t) \|_{H^s} \leq M_s \| v(t) \|_{H^s} 
$$
where $M_s\geq 1$ is a constant depending only on $s$.
Finally, we aim at proving that
\begin{equation}
\label{eq:newynamic}
i\partial_t v(t) = \frac12 \nabla (H\circ \tau^{(1)}) (v(t)).
\end{equation}
Recalling that $u\in C^1((-T_-,T_+);\ell^1)$ and $\tau^{(0)}$ is smooth in $\ell^1$, by composition $v\in C^1([0,T];\ell^1)$ and we have
$$
i\partial_t v(t) = i\partial_t  \tau^{(0)}(u(t)) = i \mathrm{d} \tau^{(0)} (u(t)) ( \partial_t u(t)) = -\frac{i}2 \mathrm{d} \tau^{(0)} (u(t)) (  i (\nabla H) \circ \tau^{(1)} (v(t)))   .
$$
Therefore, to get \eqref{eq:newynamic}, we only have to prove that
\begin{equation}
\label{eq:pouettepouette}
 \mathrm{d} \tau^{(0)} (u(t)) i = i [ \mathrm{d} \tau^{(1)} (v(t))  ]^* \quad \mathrm{on} \quad \ell^1
\end{equation}
where $[ \mathrm{d} \tau^{(1)} (v(t))  ]^*$ denotes the adjoint of $ \mathrm{d} \tau^{(1)} (v(t))$. 
On the one hand, since $\tau^{(1)}$ is symplectic, we note that we have
\begin{equation}
\label{eq:pouettepouette1}
[ \mathrm{d} \tau^{(1)} (v(t))  ]^* i  \mathrm{d} \tau^{(1)} (v(t))  = i
\end{equation}
and on the other hand, since $\tau^{(1)} \circ \tau^{(0)} = \mathrm{id}_{\ell^1} $ on $B_{\ell^1}(0,\rho)$, we note that
$$
\mathrm{d} \tau^{(1)} (v(t)) \mathrm{d} \tau^{(0)} (u(t)) =  \mathrm{id}_{\ell^1}.
$$
Therefore, multiplying on the right \eqref{eq:pouettepouette1} by $\mathrm{d} \tau^{(0)} (u(t))$, we get \eqref{eq:pouettepouette}.
 \subsubsection{Sobolev norm estimates} Let $s\geq 0$. We recall that the observable $\mathcal{N}_{N,s}$ is given by Definition \ref{def:NNs} and that it is equivalent to $\| \cdot \|_{H^s}^2$. Since $\tau^{(1)} : B_{\ell^1}(0,2\rho) \cap H^s \to \ell^1 \cap H^s$ is smooth, by composition $v\in C^1([0,T];H^s)$. As a consequence, by composition, we have
 $$
 \partial_t \mathcal{N}_{N,s}(v(t)) = (\nabla \mathcal{N}_{N,s}(v(t)), \partial_t v(t))_{L^2} =\frac12 \{ \mathcal{N}_{N,s} , H\circ \tau^{(1)} \}(v(t)).
 $$
Thanks to the decomposition \eqref{eq:NF} of $H\circ \tau^{(1)}$ it comes
$$
\partial_t \mathcal{N}_{N,s}(v(t)) = \sum_{q=2}^r \{\mathcal{N}_{N,s}  , L^{(2q)} \}(v(t)) + (i \nabla \mathcal{N}_{N,s} (v(t)), \nabla R(v(t)) )_{L^2}.
$$
On the one hand, we have
\begin{equation*}
\begin{split}
|(i \nabla \mathcal{N}_{N,s} (v(t)), \nabla R(v(t)) )_{L^2} | &\leq \sqrt{\mathcal{N}_{N,s} (v(t))} \|   \nabla  R(v(t)) \|_{H^s} \\
&\lesssim_s \sqrt{\mathcal{N}_{N,s} (v(t))} C^{2r} \Big(\frac{r^3}{\nu} \Big)^{r-1}  \| v(t) \|_{\ell^1}  ^{2r}  \| v(t)\|_{H^s} \\
&\lesssim_s   C^{2r} \Big(\frac{r^3}{\nu} \Big)^{r-1}  \| v(t) \|_{\ell^1}  ^{2r} \,  \mathcal{N}_{N,s} (v(t)) \\
&\lesssim_s (C M_{s_0} G_{s_0} K_{s_0})^{2r}  \Big(\frac{r^3}{\nu} \Big)^{r-1}  \varepsilon^{2r}  \mathcal{N}_{N,s} (v(t)).
\end{split}
\end{equation*}
While, on the other hand, by Proposition \ref{prop:maintech}, for all $\eta \in (0,1]$, we have\footnote{Here we used $q^{2s+1}\lesssim_s 2^q$ for all $q\in\mathbb N$.}
$$
| \{\mathcal{N}_{N,s}  , L^{(2q)} \}(v(t)) | \lesssim_s 2^q  N^{-\eta} \| L^{(2q)} \|_{\ell^\infty}\| v(t) \|_{\ell^1_\eta} \| v(t) \|_{\ell^1}^{2q-3} \| v(t) \|_{H^s}^{2}.
$$
We choose
$$
\eta \equiv \eta_{s_0} = \min \Big[ 1, \frac12 \big( s_0 - \frac{d}2 \big) \Big]
$$
in such a way that $d/2 < d/2+ \eta_{s_0}  < s_0$ and so
$$
\| v(t) \|_{\ell^1_{\eta_{s_0} }} \leq K_{d/2+ \eta_{s_0} } \| v(t) \|_{H^{s_0}}.
$$
Therefore, we have
\begin{equation*}
\begin{split}
| \{\mathcal{N}_{N,s}  , L^{(2q)} \}(v(t)) | &\lesssim_s 2^q  K_{d/2+ \eta_{s_0} }^{2q-2}  N^{-\eta} \| L^{(2q)} \|_{\ell^\infty} \| v(t) \|_{H^{s_0}}^{2q-2} \| v(t) \|_{H^s}^{2} \\
&\lesssim_s 2^q (M_{s_0} G_{s_0} K_{d/2+ \eta_{s_0} })^{2q-2}  N^{-\eta} \| L^{(2q)} \|_{\ell^\infty} \varepsilon^{2q-2}  \mathcal{N}_{N,s} (v(t)) \\
&\lesssim_s  (2 C M_{s_0} G_{s_0} K_{d/2+ \eta_{s_0} })^{2q-2}  N^{-\eta}  \Big(\frac{q^2}{\nu} \Big)^{q-2} \varepsilon^{2q-2}  \mathcal{N}_{N,s} (v(t)).
\end{split}
\end{equation*}
The parameters we are going to design will satisfy the constraint
\begin{equation}
\label{eq:CFL2}
4 C M_{s_0} G_{s_0} K_{d/2+ \eta_{s_0} } \varepsilon   < \frac{\sqrt{\nu}}{Cr}.
\end{equation}
Thus we get
$$
| \{\mathcal{N}_{N,s}  , L^{(2q)} \}(v(t)) | \lesssim_s  2^{-2q}  N^{-\eta}   \mathcal{N}_{N,s} (v(t))
$$
and so
$$
| \partial_t \mathcal{N}_{N,s}(v(t)) | \lesssim_s [ N^{-\eta}+  (C M_{s_0} G_{s_0} K_{s_0})^{2r}  \Big(\frac{r^3}{\nu} \Big)^{r-1}  \varepsilon^{2r} ] \mathcal{N}_{N,s} (v(t)).
$$
Therefore, to homogenize this sum, we fix the parameter $N$ in such a way that
\begin{equation}
\label{eq:defN}
 2^{-1} \varepsilon^{-\frac{r}{\eta}} \leq N <  \varepsilon^{-\frac{r}{\eta}}.
\end{equation}
and so $N^{-\eta} \leq 2^{\eta} \varepsilon^r \leq 2 \varepsilon^r$. Recalling that $\nu$ is defined as 
$\nu = \widetilde{\gamma} r^{-4} \big(\log_2 \big( 2r N \big) \big)^{-(2r+1)} $ (see  \eqref{eq:defnu}), 
we have (using that $\varepsilon<1$ and $\log_2 (2r) \leq r$)
\begin{equation*}
\begin{split}
\nu ^{-(r-1)} \leq \widetilde{\gamma}^{-r} r^{4r} \big( \log_2 (2r \varepsilon^{-\frac{r}{\eta}}) \big)^{2r^2} &\leq \widetilde{\gamma}^{-r} r^{4r} \big( \log_2 (2r) - \frac{r}{\eta} \log(\varepsilon)  \big)^{2r^2} \\ 
&\leq 2^{2r^2}\widetilde{\gamma}^{-r} r^{3r^2} \eta^{-2r^2}  \log^{2r^2}(\varepsilon^{-1})  
\end{split}
\end{equation*}
and so
$$
 | \partial_t \mathcal{N}_{N,s}(v(t)) | \lesssim_s [ 1+  (C \widetilde{\gamma}^{-1} M_{s_0} G_{s_0} K_{s_0})^{2r}  (2 \eta^{-1})^{2r^2} r^{4r^2} \log^{2r^2}(\varepsilon^{-1}) \,    \varepsilon^{r} ] \, \varepsilon^r \mathcal{N}_{N,s} (v(t)).
$$
We fix $r$ as an integer satisfying
\begin{equation}
\label{eq:defr}
\frac{|\log \varepsilon |}{4 \log |\log \varepsilon |} \leq r \leq \frac{|\log \varepsilon |}{3 \log |\log \varepsilon |} =: r_\varepsilon.
\end{equation}
Note that this definition makes sense provided that $\varepsilon_0$ is smaller than an universal constant.
Therefore, we have
$$
 \log^{2r^2}(\varepsilon^{-1}) \,    \varepsilon^{r} \leq \exp \Big( - \frac1{36} \, \frac{(\log \varepsilon)^2}{ \log |\log \varepsilon |} \Big).
$$
and so, since
\begin{equation}
\label{eq:defeps0}
(C \widetilde{\gamma}^{-1} M_{s_0} G_{s_0} K_{s_0})^{2r_\varepsilon}  (2 \eta_{s_0}^{-1})^{2r_\varepsilon^2} r_\varepsilon^{4r_\varepsilon^2} \exp \Big( - \frac1{36} \, \frac{(\log \varepsilon)^2}{ \log |\log \varepsilon |} \Big) \mathop{\longrightarrow}_{\varepsilon \to 0} 0
\end{equation}
we deduce that provided that $\varepsilon_0$ is small enough with respect to a constant depending only on $s_0$, this quantity is bounded by $1$ (and \eqref{eq:CFL1}, \eqref{eq:CFL2} are satisfied), and so that we have
$$
| \partial_t \mathcal{N}_{N,s}(v(t)) | \leq \Upsilon_s T_\varepsilon^{-1} \mathcal{N}_{N,s} (v(t)) \quad \mathrm{where} \quad T_\varepsilon = \exp\Big( \frac{|\log \varepsilon |^2}{4 \log |\log \varepsilon |} \Big)
$$
and $\Upsilon_s>1$ is a constant depending only on $s$. As a consequence, by Gr\"onwall, since $0\leq t\leq T < T_\varepsilon$ we get
$$
 \mathcal{N}_{N,s}(v(t)) \leq e^{\Upsilon_s t  T_\varepsilon^{-1}}\mathcal{N}_{N,s}(v(0)) < e^{\Upsilon_s} \mathcal{N}_{N,s}(v(0))
$$
and so
\begin{equation}
\label{eq:growthHsfinal}
\forall t \in [0,T], \quad \| u(t) \|_{H^s}^2 < M_s^4 2^{2s} e^{\Upsilon_s} \| u^{(0)} \|_{H^s}^2.
\end{equation}
Therefore, to conclude the bootstrap (see \eqref{eq:the_bootstrap}) it is enough to set\footnote{note that the constants $M_s$ and $\Upsilon_s$ do not depend of the choice of $G_{s_0}$.}
\begin{equation}
\label{eq:defGs0}
G_{s_0} :=  M_{s_0}^2 2^{s_0} e^{\Upsilon_{s_0}}.
\end{equation}

\end{document}